\newcommand\R{\mathbb{R}}
\def\dsp{\displaystyle}
\def\F{\mathcal F}
\newcommand\1{\mathbbm{1}}
\def\kxi{\begin{pmatrix}1\\ \xi\end{pmatrix}}
\newtheorem{theorem}{Theorem}[section]
\newtheorem{lemma}[theorem]{Lemma}
\newtheorem{proposition}[theorem]{Proposition}
\newtheorem{corollary}[theorem]{Corollary}
\newenvironment{proof}[1][Proof]{\begin{trivlist}
\item[\hskip \labelsep {\bfseries #1}] \it }{$\blacksquare$\end{trivlist}}
\newtheorem{remark}[theorem]{Remark}
\definecolor{Red}{rgb}{1,0,0}
\definecolor{Blue}{rgb}{0,0,1}
\definecolor{Green}{rgb}{0,1,0}
\definecolor{red}{rgb}{1,0,0}
\definecolor{blue}{rgb}{0,0,1}
\begin{document}

\title{Kinetic entropy for the layer-averaged hydrostatic Navier-Stokes equations}

\author{E.~Audusse\footnote{Universit\'e Paris 13, Laboratoire d'Analyse, G\'eom\'etrie et Applications, 
99 av. J.-B. Cl\'ement, F-93430 Villetaneuse, France},
M.-O. Bristeau\footnote{Inria Paris, 2 rue Simone Iff, CS 42112, 75589
  Paris Cedex 12, France}$\;^{,}$\footnote{Sorbonne Universit\'es UPMC Univ Paris 6, Laboratoire Jacques-Louis Lions, 4 Place Jussieu, 75252 Paris cedex 05, France}$\;^{,}$\footnote{CEREMA, 134 rue de Beauvais, F-60280
Margny-L\`es-Compi\`egne, France}$\;$ \&
  J. Sainte-Marie\footnotemark[2]$\;^{,}$\footnotemark[3]$\;^{,}$\footnotemark[4]$\;^{,}$\footnote{Corresponding author: Jacques.Sainte-Marie@inria.fr}}
\date{\today}

\maketitle
\begin{abstract}
We are interested in the numerical approximation of the hydrostatic
free surface incompressible Navier-Stokes equations. By using a
layer-averaged version of the equations, we are able to extend
previous results obtained for shallow water system. We derive a
vertically implicit / horizontally explicit finite volume kinetic
scheme that ensures the positivity of the approximated water depth,
the well-balancing and a fully discrete energy inequality.
\end{abstract}

{\it Keywords~:} Incompressible Euler and Navier-Stokes sytems, free
surface flows, layer-averaged model, finite volumes, kinetic solver,
hydrostatic reconstruction, discrete entropy inequality, IMEX scheme.
\smallskip

{\it 2000 Mathematics Subject Classification: 65M12, 74S10, 76M12, 35L65.}

\tableofcontents

\section{Introduction}
Shallow water equations \cite{bouchut_book,bristeau1} have been widely used to model free surface geophysical fluid flows. This hyperbolic system can be derived from free surface incompressible Navier-Stokes equations by integration along the vertical direction and under a long wave approximation, that implies in particular an hydrostatic distribution of the pressure at the leading order \cite{gerbeau}. Due to this reduction of dimension, it is much more easy to deal with in a numerical point of view. Nevertheless it is not suitable for certain situations - stratified flows, wind-driven vertical circulation... In these cases, it is necessary to come back to three dimensional models but the hydrostatic assumption remains mostly valid, leading to consider the hydrostatic Navier-Stokes equations, also known as primitive equations \cite{brenier,grenier,azerad,temam,masmoudi}, see also \cite{pedlosky, olbers} for a general introduction to ocean models. In a previous work \cite{JSM_M2AN}, we introduced a layer-averaged approach to deal with this hydrostatic Navier-Stokes system in a framework that shares (forgetting for a while the viscous part to concentrate on Euler equations and advective processes) some hyperbolic properties with the shallow water equations. Our main result in this work is to propose a vertically implicit and horizontally explicit colocated finite volume scheme to compute approximate solutions of this layer-averaged model for which we are able to prove positivity of the water depth and a fully discrete energy inequality.\\

It is well known that  
incompressible hydrostatic Euler and shallow water equations 
satisfy some invariant domain properties since the water depth of the flow remains nonnegative.
Moreover regular solutions satisfy an energy equality.
Extending the theory developed for scalar hyperbolic equations, 
one assumes the energy associated to discontinuous solution, 
that acts as a mathematical entropy,
has to decrease through the shocks. 
To extend these properties to the numerical approximation is not
straightforward. A huge literature was devoted to derive explicit
colocated finite volume scheme for the shallow water equations
including topography source terms, but, up to our knowledge, only very
few schemes are endowed with these properties
\cite{simeoni,bouchut_book}. In a recent work \cite{JSM_entro}, a
kinetic framework was used to prove that the hydrostatic
reconstruction technique~\cite{bristeau1} associated to a kinetic
scheme \cite{Perthame90,perthame,bristeau} is able to provide, under a
classical CFL condition,  approximate solutions of the shallow water
system with topography source terms that are positive and satisfy a
fully discrete entropy inequality with a right hand side, or let say
an error term, that is proved to be proportional to $\Delta x^2$ with
a constant that is independent of the regularity of the solution -
that may develop discontinuities since one deals with hyperbolic
system. This inequality is the key step to prove the convergence of
the scheme \cite{lhebrard}. Thanks to the similarity between the
shallow water equations and the layer-averaged model for hydrostatic
Euler equations, it is possible to extend the kinetic framework and
the hydrostatic reconstruction technique, and then parts of the
previous proof, to the latter. But the new terms related to the
vertical direction makes the things more intricate, and the fully
explicit finite volume scheme originally proposed in \cite{JSM_M2AN}
may suffer from a very restrictive CFL condition in some
situations. In this work, we then propose a new implicit
discretization of the exchange terms in the vertical direction that
allows us to obtain entropy satisfying approximate solutions under the
same CFL condition as for the shallow water system. Up to our
knowledge, it is the first time this kind of result is obtained in
this two dimensional $(x,z)$ context. From a practical point of view, the linear problem to solve in the implicit part is restricted to the vertical direction and its size is then proportionnal to the number of layers that are introduced in the model, independently of the horizontal discretization. The added CPU cost is then strictly limited. \\

The outline of the paper is as follows : in Section \ref{sec:NS}, we recall the free surface incompressible hydrostatic Navier-Stokes equations ; in Section \ref{sec:av_euler}, we consider the inviscid Euler version of the equations and introduce the layer-averaged model and its kinetic description and we derive the related IMEX kinetic scheme ; in Section \ref{sec:properties}, we demonstrate the discrete energy inequality, first, on flat bottom and, second, including a bottom topography ; finally, in Section \ref{sec:lans}
we extend the results to the Navier-Stokes case.


\section{The Navier-Stokes system}
\label{sec:NS}

We begin by considering the two-dimensional hydrostatic incompressible Navier--Stokes system \cite{lions} describing a free surface gravitational flow moving over a bottom
topography $z_b(x)$. For free surface flows, the hydrostatic assumption consists in
neglecting the vertical acceleration, see \cite{brenier,grenier,azerad,temam,masmoudi} for justifications and mathematical analysis of the obtained models.

\subsection{The hydrostatic incompressible Navier-Stokes system}

We denote with $x$ and $z$ the horizontal and vertical directions, respectively.  
The system reads
\begin{eqnarray}
& & \frac{\partial u}{\partial x}+\frac{\partial w}{\partial z} =0,\label{eq:NS_2d1}\\
& & \frac{\partial u}{\partial t } + \frac{\partial u^2}{\partial x } +\frac{\partial uw }{\partial z }+ \frac{\partial p}{\partial x } = \frac{\partial \Sigma_{xx}}{\partial x} + \frac{\partial \Sigma_{xz}}{\partial z},\label{eq:NS_2d2}\\
& & 
\hspace*{3cm} \frac{\partial p}{\partial z} = -g + \frac{\partial \Sigma_{zx}}{\partial x} + \frac{\partial \Sigma_{zz}}{\partial z},
\label{eq:NS_2d3}
\end{eqnarray}
and we consider solutions of the equations  for
$$t>t_0, \quad x \in \R, \quad z_b(x) \leq z \leq \eta(t,x):=h(t,x)+z_b(x),$$
where $\eta(t,x)$ represents the free surface elevation, $h(t,x)$ the
water depth, ${\bf u}=(u,w)^T$ the velocity
vector and $g$ the gravity acceleration. 

The chosen form of the viscosity tensor is
\begin{eqnarray*}
\Sigma_{xx} = 2 \mu \frac{\partial u}{\partial x}, & & \Sigma_{xz} = \mu \bigl( \frac{\partial u}{\partial z} + \frac{\partial w}{\partial x} \bigr),\label{eq:visco1}\\
\Sigma_{zz} = 2 \mu \frac{\partial w}{\partial z}, && \Sigma_{zx} = \mu \bigl( \frac{\partial u}{\partial z} + \frac{\partial w}{\partial x}\bigr),
\label{eq:visco2}
\end{eqnarray*}
where $\mu$ is a dynamic  viscosity.

\subsection{Boundary conditions}

The system (\ref{eq:NS_2d1})-(\ref{eq:NS_2d3}) is completed with boundary conditions 
at the bottom and at the free surface. 
The outward unit normal vector to the free surface ${\bf n}_s$ and the upward
unit normal vector to the bottom ${\bf n}_b$ are given by
$${\bf n}_s = \frac{1}{\sqrt{1 + \bigl(\frac{\partial \eta}{\partial x}\bigr)^2}}
  \left(\begin{array}{c} -\frac{\partial \eta}{\partial x}\\ 1 \end{array} \right), 
  \quad {\bf n}_b = 
  \frac{1}{\sqrt{1 + \bigl(\frac{\partial z_b}{\partial x}\bigr)^2}} 
   \left(\begin{array}{c} -\frac{\partial z_b}{\partial x}\\ 1 \end{array} \right),$$
 respectively.  
We then denote with $\Sigma_T$  the total stress tensor, which has the form:
$$\Sigma_T = - p I_d + \left(\begin{array}{cc} \Sigma_{xx} & \Sigma_{xz}\\ 
\Sigma_{zx} & \Sigma_{zz}\end{array}\right).$$

\subsubsection{Free surface conditions}

At the free surface we have the kinematic boundary condition
\begin{equation}
\frac{\partial \eta}{\partial t} + u_s \frac{\partial \eta}{\partial x}
-w_s = 0,
\label{eq:free_surf} 
\end{equation}
where the subscript $s$ indicates the value of the
considered quantity at the free surface. 

Assuming negligible the air viscosity, the continuity
of stresses at the free boundary imposes
\begin{equation}
\Sigma_T {\bf n}_s = -p^a {\bf n}_s,
\label{eq:BC_h}
\end{equation}
where $p^a=p^a(t,x)$ is a given function corresponding to the
atmospheric pressure.

\subsubsection{Bottom conditions}

The kinematic boundary condition at the bottom 
consists in a classical no-penetration condition:
\begin{equation}
\frac{\partial z_b}{\partial t}  + u_b \frac{\partial z_b}{\partial x} - w_b = 0,
\label{eq:bottom} 
\end{equation}
that reduces to ${\bf u}_b \cdot {\bf n}_b = 0$ when $z_b$ does not
depend on time $t$.

For the stresses at the bottom we consider a wall law under the form
\begin{equation*}
{\bf t}_b \cdot \Sigma_T {\bf n}_b = \kappa {\bf u}_b \cdot {\bf t}_b,
\label{eq:BC_z_b}
\end{equation*}
where ${\bf t}_b$ is a unit vector satisfying ${\bf t}_b \cdot {\bf n}_b = 0$.
If $\kappa({\bf u_b},h)$ is constant then
we recover a Navier friction condition as in \cite{gerbeau}. Introducing
a laminar friction $k_l$ and a turbulent friction $k_t$, we use the expression 
$$\kappa({\bf u_b},h) = k_l + k_t h |{\bf u_b}|,$$
corresponding to the boundary condition used in \cite{marche}. Another
form of $\kappa({\bf u_b},h)$ is used in \cite{bouchut}, and for other
wall laws the reader can also refer
to \cite{valentin}. Due to thermo-mechanical considerations, in the
sequel we will suppose $\kappa({\bf u_b},h) \geq 0$, and $\kappa({\bf
u_b},h)$ will be  often simply denoted by $\kappa$.

\subsection{Energy balance}
We recall the fondamental stability property related to the fact that
the hydrostatic Navier-Stokes system admits a mechanical energy
\begin{equation}
E(t,x,z) = \frac{u^2}{2} + gz,
\label{eq:energy_exp}
\end{equation}
leading to the following relation for smooth solutions
\begin{multline}
\frac{\partial}{\partial t}\int_{z_b}^{\eta} (E + p^a)\ dz +
\frac{\partial}{\partial x}\int_{z_b}^{\eta} \left[ u\bigl( E + p
  \bigr) -\mu \left( 2u\frac{\partial u}{\partial x} + w\left(
      \frac{\partial u}{\partial z} + \frac{\partial w}{\partial x}\right)\right)\right] dz
\\
= -2\mu\int \left[ \left(\frac{\partial u}{\partial x}\right)^2 +
  \frac{1}{2}\left(\frac{\partial u}{\partial z} + \frac{\partial w}{\partial x}\right)^2 + \left(\frac{\partial w}{\partial z}\right)^2 \right] dz + h\frac{\partial p^a}{\partial t} +
(\left. p\right|_b-p^a)\frac{\partial z_b}{\partial t} - \kappa u_b^2.
\label{eq:energy_eq}
\end{multline}
For the sake of simplicity, in the following we neglect the
variations of the atmospheric pressure $p^a$ i.e. $p^a =
p^a_0$ with $p^a_0=0$ and we also consider
\begin{equation}
\frac{\partial z_b}{\partial t} = 0.
\label{eq:zb}
\end{equation}
It follows the right hand side of \eqref{eq:energy_eq} is nonpositive and the mean energy 
$$
\bar{E}(t,x)=\int_{z_b}^{\eta} E(t,x,z) \, dz
$$
is not increasing in time.

\section{The layer-averaged Euler system}
\label{sec:av_euler}

In this section, we present a simplified derivation of the layer-averaged system that was introduced in \cite{JSM_M2AN}. As the main result of this work is related to a discrete entropy inequality, we focus on the energy property of the model.\\

Neglecting the viscous effects (we come back to the Navier-Stokes system in Section \ref{sec:lans}), we consider the hydrostatic Euler equations in a conservative form
\begin{eqnarray}
& & \frac{\partial u}{\partial x}+\frac{\partial w}{\partial z} =0,\label{eq:euler_2d1}\\
& & \frac{\partial u}{\partial t } + \frac{\partial u^2}{\partial x } +\frac{\partial uw }{\partial z }+ \frac{\partial p}{\partial x } = 0,\label{eq:euler_2d2}\\
& & 
\hspace*{3cm} \frac{\partial p}{\partial z} = -
g.\label{eq:euler_2d3}
\end{eqnarray}
Kinematic boundary conditions \eqref{eq:free_surf} and \eqref{eq:bottom} remain unchanged 
while the equality of stresses at the free surface \eqref{eq:BC_h} reduces to
\begin{equation}
p_s = 0.
\label{eq:ps}
\end{equation}
The energy balance~\eqref{eq:energy_eq} reduces to the equality
\begin{equation}
\frac{\partial}{\partial t}\int_{z_b}^{\eta} E\ dz +
\frac{\partial}{\partial x}\int_{z_b}^{\eta} u\bigl( E + p
  \bigr) dz = 0.
\label{eq:energy_euler_eq}
\end{equation}
Note that using \eqref{eq:euler_2d3} and \eqref{eq:ps} the pressure $p(t,x,z)$ can be computed has a function depending only of the free surface and the vertical coordinate. Moreover, using the divergence free condition \eqref{eq:euler_2d1} and the boundary condition \eqref{eq:bottom}, the vertical velocity can be computed as a function of the horizontal velocity and the vertical coordinate. It follows the unknowns of the system reduce to the water depth $h(t,x)=\eta(t,x)-z_b(x)$ and the horizontal velocity $u(t,x,z)$, that will appear even more explicitly in the layer averaged version presented below.

\subsection{Discretization of the fluid domain}

The interval $[z_b,\eta]$ is
divided into $N$ layers $\{L_\alpha\}_{\alpha\in\{1,\ldots,N\}}$ of
thickness $l_\alpha h(t,x)$ where each layer
$L_\alpha$ corresponds to the points satisfying $z \in L_\alpha(t,x) = [z_{\alpha-1/2},z_{\alpha+1/2}]$
with
\begin{equation}
\left\{\begin{array}{l}
z_{\alpha+1/2}(t,x) = z_b(x) + \sum_{j=1}^\alpha l_j h(t,x),\, z_{1/2}(x)=z_b(x)\\
h_\alpha(t,x) = z_{\alpha+1/2}(t,x) - z_{\alpha-1/2}(t,x)=l_\alpha h(t,x),
 \quad \alpha\in [1,\ldots,N],
\end{array}\right.
\label{eq:layer}
\end{equation}
 with $l_j>0, \quad \sum_{j=1}^N l_j=1$, see
Fig.~\ref{fig:notations_mc}.

\begin{figure}[hbtp]
\begin{center}
\caption{Notations for the multilayer approach.}
\label{fig:notations_mc}
\end{center}
\end{figure}

\subsection{Layer-averaged model}
\label{subsec:depth-averaging}

Let us consider the space
$\mathbb{P}_{0,h}^{N,t}$ of piecewise constant functions defined by
$$\mathbb{P}_{0,h}^{N,t} = \left\{ \1_{z \in L_\alpha(t,x)}(z),\quad \alpha\in\{1,\ldots,N\}\right\},$$
where $\1_{z \in L_\alpha(t,x)}(z)$ is the characteristic
function of the interval $L_\alpha(t,x)$. Using this formalism, the
projection of $u$, $w$ on $\mathbb{P}_{0,h}^{N,t}$ is a
piecewise constant function defined by
\begin{equation}
X^{N}(x,z,\{z_\alpha\},t)  = \sum_{\alpha=1}^N
\1_{[z_{\alpha-1/2},z_{\alpha+1/2}]}(z)X_\alpha(t,x),
\label{eq:ulayer}
\end{equation}
for $X \in (u,w)$.

In the following we focus on the following layer-averaged model
approximating the incompressible hydrostatic Euler system~\eqref{eq:euler_2d1}-\eqref{eq:euler_2d3}
\begin{eqnarray}
&& \frac{\partial h}{\partial t }   + 
\sum_{\alpha=1}^N \frac{\partial h_\alpha u_\alpha }{\partial x } =0.
\label{eq:HH}\\
&&\frac{\partial  h_\alpha u_\alpha}{\partial t } +
\frac{\partial }{\partial x }\left(h_\alpha u^2_\alpha +
  \frac{g}{2}h_\alpha h\right) = -gh_\alpha \frac{\partial
  z_b}{\partial x} +
u_{\alpha+1/2}G_{\alpha+1/2} -
u_{\alpha-1/2}G_{\alpha-1/2},\label{eq:eq44}
\end{eqnarray}
where the mass exchange terms $G_{\alpha+1/2}$ satisfy
\begin{eqnarray}
G_{\alpha+1/2} & = & \sum_{j=1}^\alpha \left(\frac{\partial h_j}{\partial t }   + 
\frac{\partial h_j u_j}{\partial x } \right),
\label{eq:Qalphabis}\\
G_{N+1/2} & = & G_{1/2} = 0,\label{eq:Qlim}
\end{eqnarray}
and the interface velocities $u_{\alpha+1/2}$ are defined using an upwinding strategy
\begin{equation}
u_{\alpha+1/2} =
\left\{\begin{array}{ll}
u_\alpha & \mbox{if } \;G_{\alpha+1/2} \leq 0\\
u_{\alpha+1} & \mbox{if } \;G_{\alpha+1/2} > 0.
\end{array}\right.
\label{eq:upwind_uT}
\end{equation}

\begin{remark}
In the monolayer case $\alpha=1$, system \eqref{eq:HH}-\eqref{eq:eq44} with boundary conditions \eqref{eq:Qlim} reduces to the classical shallow water equations, see \cite{bouchut_book,bristeau1}.
\end{remark}

Relation~\eqref{eq:eq44} is obtained by integrating the momentum equation \eqref{eq:euler_2d2}
on the layer $L_\alpha$ while the integration of the divergence equation \eqref{eq:euler_2d1} on the same domain leads to
\begin{equation}
\frac{\partial h_\alpha }{\partial t }   + \frac{\partial h_\alpha u_\alpha }{\partial x } =
G_{\alpha+1/2} - G_{\alpha-1/2}.
\label{eq:H}
\end{equation}
Relations \eqref{eq:Qlim} directly follow from the kinematic boundary conditions  \eqref{eq:free_surf} and \eqref{eq:bottom}. Then global mass equation \eqref{eq:HH} and definition of the mass exchange terms \eqref{eq:Qalphabis} are deduced by summing relations \eqref{eq:H} over layers $L_j$ for $j$ varying from $1$ to $N$ or from $1$ to $\alpha$ respectively.

\begin{remark}
The mass exchange terms $G_{\alpha+1/2}$ can be defined by using only derivatives in space that correspond to partial mass fluxes. It follows from the global mass equation \eqref{eq:HH} and the definition of the layer depth \eqref{eq:layer}
that relation \eqref{eq:Qalphabis} may be written
\begin{equation}
G_{\alpha+1/2}  =  \sum_{j=1}^\alpha \left( 
\frac{\partial h_j u_j}{\partial x } - l_j \sum_{k=1}^N \frac{\partial h_k u_k}{\partial x } \right).
\label{eq:Qalphabis2}
\end{equation}
\end{remark}

\begin{proposition}
The layer-averaged system~\eqref{eq:HH}-\eqref{eq:eq44} admits, for smooth solutions, the
layer energy balance
\begin{eqnarray}
& & \hspace*{-0.5cm}\frac{\partial }{\partial t} E_{\alpha} + \frac{\partial}{\partial x}
\left( u_\alpha\left(E_{\alpha} + \frac{gh}{2} h_\alpha \right)
\right) =
\left( u_{\alpha+1/2}u_{\alpha} -\frac{u_{\alpha}^2}{2} + g\eta\right) G_{\alpha+1/2}\nonumber \\
& & 
-\left( u_{\alpha-1/2}u_\alpha -\frac{u_{\alpha}^2}{2} +  g\eta\right) G_{\alpha-1/2},
\label{eq:energy_mcl}
\end{eqnarray}
where $E_\alpha$ is defined by 
$$E_{\alpha} = h_\alpha\left[\frac{ u_\alpha^2}{2} +g \left(\frac{h}{2}+z_b\right)\right].$$

It follows the global energy inequality 
\begin{eqnarray}
\frac{\partial }{\partial t} \left( \sum_{\alpha=1}^N E_{\alpha}\right) + \frac{\partial}{\partial x}
\left(\sum_{\alpha=1}^N u_\alpha\left(E_{\alpha} +
  \frac{gh}{2} h_\alpha \right) \right) = -\sum_{\alpha=1}^{N-1} \frac{1}{2}(u_{\alpha+1}
- u_\alpha)^2 |G_{\alpha+1/2}|.
\label{eq:energy_glol}
\end{eqnarray}
\label{prop:energy_bal}
where the right hand side is obviously nonpositive.
\end{proposition}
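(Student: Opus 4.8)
The plan is to prove the single-layer balance \eqref{eq:energy_mcl} by splitting $E_\alpha$ into its kinetic part $K_\alpha=\tfrac12 h_\alpha u_\alpha^2$ and potential part $P_\alpha=gh_\alpha\bigl(\tfrac h2+z_b\bigr)$, treating each separately, and then to sum over $\alpha$ and exploit the upwind choice \eqref{eq:upwind_uT} to obtain the sign in \eqref{eq:energy_glol}. First, for the kinetic part I would multiply the momentum equation \eqref{eq:eq44} by $u_\alpha$ and use the layer mass equation \eqref{eq:H}, writing $u_\alpha\partial_t(h_\alpha u_\alpha)=\partial_t(\tfrac12 h_\alpha u_\alpha^2)+\tfrac12 u_\alpha^2\partial_t h_\alpha$ and replacing $\partial_t h_\alpha$. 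The inertial flux together with this substitution collapses, after the standard shallow-water cancellation, into $\partial_x(\tfrac12 h_\alpha u_\alpha^3)$, while the exchange terms produce exactly $(u_\alpha u_{\alpha\pm1/2}-\tfrac12 u_\alpha^2)G_{\alpha\pm1/2}$, i.e.\ the kinetic portion of the right-hand side of \eqref{eq:energy_mcl}. The only leftovers at this stage are the pressure-work term $-u_\alpha\partial_x(\tfrac g2 h_\alpha h)$ and the topography term $-gh_\alpha u_\alpha\partial_x z_b$.

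Second, for the potential part I would differentiate $P_\alpha$, using $\partial_t z_b=0$, and then invoke the crucial constraint $h_\alpha=l_\alpha h$ from \eqref{eq:layer} to rewrite $\tfrac12 g h_\alpha\partial_t h=\tfrac{gh}{2}\partial_t h_\alpha$. This collapses the potential balance into $\partial_t P_\alpha=g\eta\,\partial_t h_\alpha$ with $\eta=h+z_b$. Substituting \eqref{eq:H} gives $\partial_t P_\alpha=-g\eta\,\partial_x(h_\alpha u_\alpha)+g\eta(G_{\alpha+1/2}-G_{\alpha-1/2})$, whose exchange part supplies precisely the missing $g\eta\,G_{\alpha\pm1/2}$ contributions to the right-hand side. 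Adding the two balances, the remaining transport, pressure and topography terms must assemble into the flux divergence: using $\partial_x(h_\alpha h)=2h_\alpha\partial_x h$ (again from $h_\alpha=l_\alpha h$) one verifies $-u_\alpha\partial_x(\tfrac g2 h_\alpha h)-gh_\alpha u_\alpha\partial_x z_b-g\eta\,\partial_x(h_\alpha u_\alpha)=-\partial_x(gh_\alpha u_\alpha\eta)$, and since $u_\alpha\bigl(E_\alpha+\tfrac{gh}{2}h_\alpha\bigr)=\tfrac12 h_\alpha u_\alpha^3+gh_\alpha u_\alpha\eta$, this reproduces exactly the flux in \eqref{eq:energy_mcl}.

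Finally, summing \eqref{eq:energy_mcl} over $\alpha$ telescopes the flux, and by \eqref{eq:Qlim} only the interior interfaces survive on the right. Regrouping the source terms by interface $\alpha+1/2$, the $g\eta$ contributions of the two adjacent layers cancel (they share the same free surface), leaving $G_{\alpha+1/2}(u_\alpha-u_{\alpha+1})\bigl[u_{\alpha+1/2}-\tfrac12(u_\alpha+u_{\alpha+1})\bigr]$. The decisive step is then to insert the upwind value \eqref{eq:upwind_uT}: taking $u_{\alpha+1/2}=u_\alpha$ when $G_{\alpha+1/2}\le0$ and $u_{\alpha+1/2}=u_{\alpha+1}$ otherwise turns each bracket into $\pm\tfrac12(u_\alpha-u_{\alpha+1})$, so every interface term becomes $-\tfrac12|G_{\alpha+1/2}|(u_{\alpha+1}-u_\alpha)^2$, which is \eqref{eq:energy_glol}.

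I expect the main obstacle to be the potential-energy bookkeeping: correctly accounting for the coupling of a single layer to the \emph{global} depth $h$ through $h_\alpha=l_\alpha h$, and checking that the pressure-work, topography and potential-transport terms recombine exactly into $\partial_x(gh_\alpha u_\alpha\eta)$. The kinetic computation is the familiar shallow-water manipulation, and the final sign is a short consequence of the upwinding once the interfaces are regrouped.
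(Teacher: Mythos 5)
Your proposal is correct and follows essentially the same route as the paper's proof: multiplying the momentum equation \eqref{eq:eq44} by $u_\alpha$ and the layer mass equation \eqref{eq:H} by $g\eta$ (your potential-energy identity $\partial_t P_\alpha = g\eta\,\partial_t h_\alpha$ via $h_\alpha = l_\alpha h$ is exactly the paper's use of \eqref{eq:zb} and \eqref{eq:layer}), then recombining the pressure, topography and transport terms into the flux $\partial_x\bigl(g h_\alpha u_\alpha \eta\bigr)$ and concluding \eqref{eq:energy_glol} by interface-wise regrouping and the upwind choice \eqref{eq:upwind_uT}. The only difference is expository: you spell out the flux assembly and the interface cancellation that the paper leaves implicit.
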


\begin{remark}
The layer energy $E_\alpha$ is not the meanvalue on a layer of the
pointwise energy $E$ associated to Euler equations and defined by
\eqref{eq:energy_exp}. The reason is we are not interested in a local kinetic energy associated to momentum equation per layer \eqref{eq:eq44} but in a global potential energy associated to the global mass conservation \eqref{eq:HH}. According to that, it is easy to check that 
$$
\sum_{\alpha=1}^N E_{\alpha} = \sum_{\alpha=1}^N \frac{h_\alpha
  u_\alpha^2}{2} + gh \left[\frac{h}{2} + z_b\right].
$$
Relation \eqref{eq:energy_glol} has then to be compared to relation \eqref{eq:energy_euler_eq} for  Euler equations. It appears that the vertical layer-averaging introduces numerical diffusion, as it is usually the case for spatial discretization associated to an upwinding strategy, see \eqref{eq:upwind_uT}.
\end{remark}

\begin{proof}[Proof of prop.~\ref{prop:energy_bal}]
Numerical computations to obtain relation \eqref{eq:energy_mcl} from the layer mass \eqref{eq:H} and momentum \eqref{eq:eq44} equations are a straightforward generalization of what is usually done for the classical shallow water model. More precisely, multiplying momentum equation \eqref{eq:eq44} by $u_\alpha$ and using \eqref{eq:H}  leads to
\begin{eqnarray*}
&& \frac{\partial }{\partial t } \left( \frac{h_\alpha u_\alpha^2}{2} \right) + \frac{\partial }{\partial x}
\left( u_\alpha \frac{h_\alpha u_\alpha^2}{2} \right) +
\frac{\partial }{\partial x }\left(\frac{gh}{2} h_\alpha\right)u_\alpha
+ g h_\alpha u_\alpha \frac{\partial z_b}{\partial x} \\
&& \qquad \qquad   = 
\left(u_{\alpha+1/2}u_\alpha - \frac{u_\alpha^2}{2}\right) G_{\alpha+1/2}
- \left(u_{\alpha-1/2}u_\alpha - \frac{u_\alpha^2}{2}\right) G_{\alpha-1/2},
\end{eqnarray*}
Now multiplying mass equation \eqref{eq:H} by $g\eta=g(h+z_b)$ and using relations \eqref{eq:zb} and \eqref{eq:layer} leads to
$$
\frac{\partial }{\partial t} (g z_b h_\alpha)+ \frac{\partial}{\partial t} \left(\frac{gh}{2} h_\alpha\right) + gz_b \frac{\partial h_\alpha u_\alpha }{\partial x } + gh \frac{\partial h_\alpha u_\alpha }{\partial x } =
\left(G_{\alpha+1/2} - G_{\alpha-1/2}\right) g\eta.
$$
Adding both relations and using \eqref{eq:layer} for the pressure term, we obtain the energy relation per layer \eqref{eq:energy_mcl}. The global energy inequality \eqref{prop:energy_bal} follows by adding these relations for all layers and using the upwind definition of the interface velocities \eqref{eq:upwind_uT}.
\end{proof}

\begin{remark}
Note that multiplying relation \eqref{eq:euler_2d1} and integrating over the layer $L_\alpha$ leads to the following equality
\begin{eqnarray*}
&& \frac{\partial}{\partial t} \left(\frac{z_{\alpha+1/2}^2 - z_{\alpha-1/2}^2}{2}\right) +
\frac{\partial}{\partial x} \left( \frac{z_{\alpha+1/2}^2 -
z_{\alpha-1/2}^2}{2}u_\alpha\right)  dz =  h_\alpha
w_\alpha\nonumber\\
&& \hspace*{6cm} + z_{\alpha+1/2}G_{\alpha+1/2} - z_{\alpha-1/2}
G_{\alpha-1/2},
\end{eqnarray*}
that may be used as a postprocessing to compute the layer vertical velocity $w_\alpha$.
\end{remark}

\subsection{Kinetic description}

In this paragraph we first give a kinetic interpretation of the
system~\eqref{eq:HH}-\eqref{eq:energy_mcl} and then we establish some properties of the
proposed discrete scheme. It is a generalization to the layer-averaged framework of kinetic interpretations proposed for other fluid models, see \cite{bristeau,simeoni,perthame}. Note that a first kinetic interpretation of the layer-averaged model was introduced in \cite{JSM_M2AN}. Here we propose an improved version and we derive an energy balance at the kinetic level.

\subsubsection{Kinetic interpretation}

Let us define the vector of unknowns
\begin{equation}
U_\alpha = (h_\alpha, h_\alpha u_\alpha)^T,\qquad U = (h, h_1 u_1,\ldots, h_N u_N)^T,
\label{eq:stateU}
\end{equation}
we also denote $q_{\alpha}= h_{\alpha} u_{\alpha}$. 

To build Gibbs equilibria, we choose the function
\begin{equation}
\chi_0 (z) = \frac{1}{\pi}\sqrt{1 - \frac{z^2}{4}}.
\label{eq:chi0}
\end{equation}
This choice corresponds to the classical kinetic maxwellian used in \cite{simeoni} for example and that is defined for $\xi \in \R$ by
\begin{equation}
	M_\alpha = M(U_\alpha,\xi)= \frac{h_\alpha}{c} \chi_0 \left(
          \frac{\xi - u_\alpha}{c}\right) =
        \frac{l_\alpha}{g\pi}\Bigl(2gh-(\xi-u_\alpha)^2\Bigr)_+^{1/2}
        = l_\alpha \overline{M}_\alpha,
	\label{eq:kinmaxw}
\end{equation}
with
$$c = \sqrt{\frac{g}{2}h}.$$
The definition of $\overline{M}_\alpha$ given by
Eq.~\eqref{eq:kinmaxw} will be used extensively in the following.

The kinetic maxwellian satisfies the following moment relations,
\begin{equation}
\begin{array}{c}
	\dsp \int_\R M(U_\alpha,\xi)\,d\xi=h_\alpha,\qquad \int_\R \xi
        M(U_\alpha,\xi)\,d\xi=h_\alpha u_\alpha,\\
	\dsp\int_\R \xi^2 M(U_\alpha,\xi)\,d\xi=h_\alpha u_\alpha^2+h_\alpha \frac{gh}{2}.
	\label{eq:kinmomM}
	\end{array}
\end{equation}
Now we introduce a second list of Gibbs equilibria $N_{\alpha+1/2}$ associated to the mass exchange terms between layers and defined by 
\begin{equation}
N_{\alpha+1/2} = N(U_\alpha,U_{\alpha+1},\xi) = \frac{G_{\alpha+1/2}}{c} \ \chi_0 \left(\frac{\xi -
  u_{\alpha+1/2}}{c}\right), \quad \alpha=1,\ldots,N-1
\label{eq:Nbis}
\end{equation}
completed by the boundary conditions $N_{1/2} = N_{N+1/2} = 0$, see \eqref{eq:Qlim}.
Due to the upwind definition of the interface velocity \eqref{eq:upwind_uT}, definition \eqref{eq:Nbis} is equivalent to
\begin{equation}
N_{\alpha+1/2} = \frac{G_{\alpha+1/2}}{h} \overline{M}_{\alpha+1/2}, \qquad 
\overline{M}_{\alpha+1/2} = 
\left\{\begin{array}{ll}
\overline{M}_\alpha & \mbox{if } G_{\alpha+1/2} \leq 0\\
\overline{M}_{\alpha+1} & \mbox{if } G_{\alpha+1/2} \geq 0
\end{array}\right.
\label{eq:defN_M}
\end{equation}
It follows from relations \eqref{eq:kinmomM} that 
\begin{equation}
\int_\R N_{\alpha+1/2} \, d\xi = {G_{\alpha+1/2}}, 
\quad \int_\R \xi N_{\alpha+1/2} \,d\xi = {G_{\alpha+1/2}} u_{\alpha+1/2}.
\label{eq:kinmomN}
\end{equation}

\begin{remark}
It is clear from definitions \eqref{eq:Nbis} and \eqref{eq:chi0} that $N_{\alpha+1/2}$ is well-defined for all values of $h > 0$. It is not easy to prove it remains bounded when the water depth vanishes. But it is sufficient for our purpose to note that one can characterize its behavior since, in the sense of distributions,
$$
\frac{1}{c}\chi_0\left(\frac{\xi-u}{c}\right) \underset{h\rightarrow 0}{\longrightarrow} \delta_u(\xi)
$$
\label{remark:dirac}
\end{remark}

%

We are now equipped to exhibit the kinetic interpretation of the layer-averaged model (\ref{eq:HH})-(\ref{eq:eq44}).
\begin{proposition}
The functions $(h,u^N)$ defined by~\eqref{eq:ulayer} are strong solutions of the system 
(\ref{eq:HH})-(\ref{eq:eq44}) if and only if the sets of
equilibria $\{M_\alpha\}_{\alpha=1}^N$,~$\{N_{\alpha+1/2}\}_{\alpha=0}^N$
are solutions of the kinetic equations defined by
\begin{equation}
({\cal B}_\alpha)\qquad\frac{\partial M_\alpha}{\partial t} + \xi \frac{\partial M_\alpha}{\partial x} 
  -g \frac{\partial z_b}{\partial x} \frac{\partial M_\alpha}{\partial \xi} - N_{\alpha+1/2} + N_{\alpha-1/2} 
 = Q_{\alpha}, \qquad \alpha=1,\ldots,N
\label{eq:gibbsbis}
\end{equation}
The quantities $Q_{\alpha} = Q_{\alpha}(t,x,\xi)$ 
 are ``collision terms''  equal to zero at the
macroscopic level, i.e.\ they satisfy  a.e.\ for values of $(t,x)$
\begin{equation}
\int_{\mathbb{R}} Q_{\alpha} d\xi = 0,\quad \int_{\mathbb{R}} \xi
Q_{\alpha} d\xi =0.
\label{eq:collisionbis}
\end{equation}
\label{prop:kinetic_sv_mcl}
\end{proposition}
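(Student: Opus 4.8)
The plan is to reduce the entire equivalence to taking the zeroth and first $\xi$-moments of the kinetic equation $(\mathcal{B}_\alpha)$, exploiting the moment relations \eqref{eq:kinmomM} and \eqref{eq:kinmomN} which translate $M_\alpha$ and $N_{\alpha+1/2}$ back into the macroscopic quantities $h_\alpha$, $h_\alpha u_\alpha$ and the exchange terms $G_{\alpha\pm 1/2}$. Since both implications hinge on the very same computation, I would first carry out the two moment integrations once and then read off each direction from them.

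First I would integrate $(\mathcal{B}_\alpha)$ in $\xi$ over $\R$. Using $\int M_\alpha\,d\xi = h_\alpha$ and $\int \xi\,\partial_x M_\alpha\,d\xi = \partial_x(h_\alpha u_\alpha)$ from \eqref{eq:kinmomM}, the fact that $\int \partial_\xi M_\alpha\,d\xi$ vanishes because $M_\alpha$ is compactly supported in $\xi$ (namely on $|\xi-u_\alpha|\le\sqrt{2gh}$), and $\int N_{\alpha\pm1/2}\,d\xi = G_{\alpha\pm1/2}$ from \eqref{eq:kinmomN}, the zeroth moment of $(\mathcal{B}_\alpha)$ reads
\[
\partial_t h_\alpha + \partial_x(h_\alpha u_\alpha) - G_{\alpha+1/2} + G_{\alpha-1/2} = \int_\R Q_\alpha\,d\xi,
\]
which is exactly the layer mass balance \eqref{eq:H} precisely when $\int_\R Q_\alpha\,d\xi = 0$. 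Next I would multiply $(\mathcal{B}_\alpha)$ by $\xi$ and integrate: here $\int \xi^2\,\partial_x M_\alpha\,d\xi = \partial_x\bigl(h_\alpha u_\alpha^2 + \frac{g}{2}h_\alpha h\bigr)$ by the third relation of \eqref{eq:kinmomM}, while an integration by parts gives $\int \xi\,\partial_\xi M_\alpha\,d\xi = -\int M_\alpha\,d\xi = -h_\alpha$, so that the topography term yields, once moved to the right-hand side, the bottom source $-g h_\alpha\,\partial_x z_b$; combined with $\int \xi N_{\alpha\pm1/2}\,d\xi = u_{\alpha\pm1/2}\,G_{\alpha\pm1/2}$ this reproduces the momentum equation \eqref{eq:eq44} precisely when $\int_\R \xi Q_\alpha\,d\xi = 0$.

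With these two identities established, the two directions follow immediately. For the implication from the kinetic system to the macroscopic one, I would impose the collision conditions \eqref{eq:collisionbis} and read off \eqref{eq:H} and \eqref{eq:eq44} from the displays above; summing \eqref{eq:H} over $\alpha=1,\dots,N$ and using the boundary conditions \eqref{eq:Qlim} (so that the telescoping sum of the $G_{\alpha\pm1/2}$ collapses) together with $\sum_\alpha h_\alpha = h$ then produces the global mass equation \eqref{eq:HH}. For the converse, I would simply \emph{define} $Q_\alpha$ to be the left-hand side of $(\mathcal{B}_\alpha)$, so that the kinetic equation holds by construction; the same two moment computations, now invoking that $(h,u^N)$ solves \eqref{eq:H} and \eqref{eq:eq44}, show that both the zeroth and first moments of $Q_\alpha$ vanish, which is \eqref{eq:collisionbis}.

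Each step is individually routine, so the only points requiring care are the interchange of $\partial_x$ with the $\xi$-integration and the vanishing of the boundary terms in the integrations by parts. Both are justified for smooth solutions with $h>0$ since every Gibbs equilibrium is smooth and compactly supported in $\xi$ on $\{h>0\}$, and the degenerate regime $h\to 0$ is controlled by the distributional convergence noted in Remark~\ref{remark:dirac}. I expect this support and interchange bookkeeping to be the main (indeed the only genuine) obstacle, the remainder being the two-line moment algebra displayed above.
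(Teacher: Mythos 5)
Your proof is correct and takes essentially the same approach as the paper: both rest on taking the zeroth and first $\xi$-moments of $({\cal B}_\alpha)$ and invoking the moment relations \eqref{eq:kinmomM} and \eqref{eq:kinmomN}, the only cosmetic difference being that the paper sums the zeroth moments over all layers to obtain \eqref{eq:HH} directly, whereas you pass through the per-layer balance \eqref{eq:H} first and then telescope using \eqref{eq:Qlim}. Your extra bookkeeping --- the integration by parts in $\xi$ with the compact support of $M_\alpha$, and the explicit converse obtained by defining $Q_\alpha$ as the kinetic residual and checking its two moments vanish --- merely spells out steps the paper leaves implicit.
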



\begin{proof}[Proof of proposition~\ref{prop:kinetic_sv_mcl}]
The proof relies on averages w.r.t the variable $\xi$ of
Eq.~\eqref{eq:gibbsbis} by using relations \eqref{eq:kinmomM} and
\eqref{eq:kinmomN}. Then using~\eqref{eq:collisionbis}, the quantities
$$\sum_1^N \int_\R ({\cal B}_\alpha)\ d\xi,\quad\mbox{and}\quad \int_\R \xi ({\cal B}_\alpha)\ d\xi,$$
respectively give~\eqref{eq:HH} and \eqref{eq:eq44} that completes
the proof.
\end{proof}

There are a lot of functions that satisfy the integral relations
\eqref{eq:kinmomM}. The interest of the kinetic maxwellian defined by \eqref{eq:kinmaxw} lies in its link with the kinetic entropy
\begin{equation}
	H_\alpha(f,\xi,z_b)= l_\alpha \left(
          \frac{\xi^2}{2}f+\frac{g^2\pi^2}{6}f^3+gz_b f \right) =
        l_\alpha H(f,\xi,z_b),
	\label{eq:kinH}
\end{equation}
where $f\geq 0$, $\xi\in\R$, $z_b\in\R$. Indeed one can check the relations
\begin{equation}\begin{array}{c}
	\dsp
E_\alpha  
= l_\alpha \int_\R H(\overline{M}_\alpha,\xi,z_b) d\xi,\\
\dsp u_\alpha
\left(E_\alpha + \frac{g}{2} h_\alpha h\right) = l_\alpha \int_\R \xi H(\overline{M}_\alpha,\xi,z_b) d\xi.
\label{eq:kinmomE}
	\end{array}
\end{equation}


\begin{proposition}
The solutions of the kinetic equation~\eqref{eq:gibbsbis} are entropy solutions in the sense they
satisfy on each layer the kinetic energy inequality 
\begin{align}
\dsp \frac{\partial}{\partial t}H(\overline{M}_\alpha,\xi,z_b) + &\xi \frac{\partial}{\partial x}H(\overline{M}_\alpha,\xi,z_b)
- g \frac{\partial z_b}{\partial x}\frac{\partial}{\partial
  \xi}H(\overline{M}_\alpha,\xi,z_b) \nonumber\\
\leq &
 \frac{G_{\alpha+1/2}}{h}H(\overline{M}_{\alpha+1/2},\xi,z_b)
                  -\frac{G_{\alpha-1/2}}{h}H(\overline{M}_{\alpha-1/2},\xi,z_b) \nonumber \\
& +\frac{g^2\pi^2}{6}\frac{G_{\alpha+1/2}}{h}
(\overline{M}_{\alpha+1/2}+2\overline{M}_{\alpha})(\overline{M}_{\alpha+1/2}-\overline{M}_{\alpha})^2\nonumber\\
&-\frac{g^2\pi^2}{6}\frac{G_{\alpha-1/2}}{h}
(\overline{M}_{\alpha-1/2}+2\overline{M}_{\alpha})(\overline{M}_{\alpha-1/2}-\overline{M}_{\alpha})^2\nonumber\\&
+\frac{g^2\pi^2}{3}\frac{G_{\alpha+1/2} - G_\alpha}{h}\overline
M_\alpha^3
+\partial_1 H (\overline{M}_\alpha,\xi,z_b)Q_\alpha
\label{eq:kin_entro1}
\end{align}
Integration in $\xi$ and sum on $\alpha$ of relations~\eqref{eq:kin_entro1} lead to a global macroscopic energy inequality, that is analog to~\eqref{eq:energy_glol}
\begin{align}
&\frac{\partial }{\partial t} \left( \sum_{\alpha=1}^N E_{\alpha}\right) + \frac{\partial}{\partial x}
\left(\sum_{\alpha=1}^N u_\alpha\left(E_{\alpha} +
  \frac{gh}{2} h_\alpha \right) \right) \nonumber \\
& \qquad \leq -\frac{g^2\pi^2}{6}\sum_{\alpha=1}^{N-1}\frac{|G_{\alpha+1/2}|}{h}
\int_\R (\overline{M}_{\alpha+1}+2\overline{M}_{\alpha})(\overline{M}_{\alpha+1}-\overline{M}_{\alpha})^2 d\xi.
\label{eq:energ_kin}
\end{align}
\label{prop:entropy_kin}
\end{proposition}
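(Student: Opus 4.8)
The plan is to establish \eqref{eq:kin_entro1} by a chain-rule computation on the kinetic equation \eqref{eq:gibbsbis}, and then to obtain \eqref{eq:energ_kin} by integrating in $\xi$, summing over the layers, and using the explicit structure of the maxwellians \eqref{eq:kinmaxw}. Since $M_\alpha=l_\alpha\overline M_\alpha$ with $l_\alpha$ constant and $H_\alpha=l_\alpha H$, the natural multiplier is $\partial_1H(\overline M_\alpha,\xi,z_b)=\tfrac{\xi^2}{2}+\tfrac{g^2\pi^2}{2}\overline M_\alpha^2+gz_b$. Multiplying \eqref{eq:gibbsbis} by it and applying the chain rule to the three transport terms, the one point to check is that the two terms coming from the explicit $\xi$- and $z_b$-dependence of $H$ cancel: indeed $\partial_2H=\xi\overline M_\alpha$ (derivative in $\xi$) and $\partial_3H=g\overline M_\alpha$ (derivative in $z_b$), so $\xi\,\partial_3H\,\partial_xz_b-g\,\partial_xz_b\,\partial_2H=0$. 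This produces $l_\alpha$ times the transport of $H(\overline M_\alpha)$; after dividing by $l_\alpha$ this is the left-hand side of \eqref{eq:kin_entro1}.

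It then remains to treat the exchange terms $\partial_1H(\overline M_\alpha)N_{\alpha\pm1/2}$. Writing $N_{\alpha\pm1/2}=\tfrac{G_{\alpha\pm1/2}}{h}\overline M_{\alpha\pm1/2}$ from \eqref{eq:defN_M}, I would use the exact third-order Taylor identity for the cubic part of $H$,
\[
\partial_1H(\overline M_\alpha)\,\overline M_{\alpha+1/2}=H(\overline M_{\alpha+1/2})+\tfrac{g^2\pi^2}{3}\overline M_\alpha^3-\tfrac{g^2\pi^2}{6}(\overline M_{\alpha+1/2}+2\overline M_\alpha)(\overline M_{\alpha+1/2}-\overline M_\alpha)^2,
\]
and its analogue at $\alpha-1/2$. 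This produces the flux terms $\tfrac{G_{\alpha\pm1/2}}{h}H(\overline M_{\alpha\pm1/2})$, the cubic term with coefficient $G_{\alpha+1/2}-G_{\alpha-1/2}$ (so that $G_\alpha$ in \eqref{eq:kin_entro1} is $G_{\alpha-1/2}$), and the quadratic remainders; the upwind rule \eqref{eq:upwind_uT}--\eqref{eq:defN_M} together with the sign of $G_{\alpha\pm1/2}$ makes each remainder sign-definite, which is the mechanism turning this identity into the inequality \eqref{eq:kin_entro1}, the undetermined piece being $\partial_1H(\overline M_\alpha)Q_\alpha$.

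For the global bound I would integrate this relation in $\xi$ and sum over $\alpha$. By the moment identities \eqref{eq:kinmomE} and $\int_\R\partial_\xi H\,d\xi=0$ (the maxwellian has compact $\xi$-support), the left-hand side becomes $\partial_t(\sum_\alpha E_\alpha)+\partial_x(\sum_\alpha u_\alpha(E_\alpha+\tfrac{gh}{2}h_\alpha))$. On the right, the fluxes $\tfrac{G_{\alpha\pm1/2}}{h}H(\overline M_{\alpha\pm1/2})$ telescope and vanish since $G_{1/2}=G_{N+1/2}=0$ by \eqref{eq:Qlim}, and a summation by parts recasts the full exchange contribution as $\tfrac{g^2\pi^2}{2h}\sum_{\alpha=1}^{N-1}G_{\alpha+1/2}\,\overline M_{\alpha+1/2}(\overline M_\alpha^2-\overline M_{\alpha+1}^2)$. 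To match the announced right-hand side I would use two elementary properties of \eqref{eq:kinmaxw}--\eqref{eq:chi0}: translation invariance, $\int_\R\overline M_\alpha^3\,d\xi$ is independent of $\alpha$ because $\overline M_\alpha$ is a fixed even profile shifted by $u_\alpha$; and the reflection symmetry about $(u_\alpha+u_{\alpha+1})/2$, which gives $\int_\R\overline M_\alpha^2\overline M_{\alpha+1}\,d\xi=\int_\R\overline M_\alpha\overline M_{\alpha+1}^2\,d\xi$. With these, both upwind choices of $\overline M_{\alpha+1/2}$ yield $G_{\alpha+1/2}\int_\R\overline M_{\alpha+1/2}(\overline M_\alpha^2-\overline M_{\alpha+1}^2)\,d\xi=|G_{\alpha+1/2}|\int_\R\overline M_\alpha^2(\overline M_{\alpha+1}-\overline M_\alpha)\,d\xi$, which is exactly $-\tfrac13|G_{\alpha+1/2}|\int_\R(\overline M_{\alpha+1}+2\overline M_\alpha)(\overline M_{\alpha+1}-\overline M_\alpha)^2\,d\xi$.

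It then only remains to discard the summed collision contribution $\sum_\alpha\int_\R\partial_1H(\overline M_\alpha)Q_\alpha\,d\xi$. A computation from \eqref{eq:kinmaxw} using $c^2=gh/2$ shows that on $\supp\overline M_\alpha$ the multiplier $\partial_1H(\overline M_\alpha)$ coincides with the affine function $\xi u_\alpha-\tfrac{u_\alpha^2}{2}+g\eta$ of $\xi$, and is strictly larger off the support; combined with the zero-moment conditions \eqref{eq:collisionbis}, this is the Gibbs minimization property that makes the integral nonpositive, so it can be dropped to reach \eqref{eq:energ_kin}. I expect this last step --- pinning down the sign of the collision term, which encodes the genuine microscopic entropy dissipation rather than a formal identity --- to be the main obstacle; the rest is chain rule, telescoping, and the two symmetry identities for $\overline M_\alpha$.
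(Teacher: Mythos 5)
Your proposal is correct in substance, and for the per-layer inequality \eqref{eq:kin_entro1} it follows the same mechanism as the paper: multiply \eqref{eq:gibbsbis} by $\partial_1 H(\overline M_\alpha,\xi,z_b)$, use the cancellation $g\,\partial_x z_b\,\partial_2 H=\xi\,\partial_x z_b\,\partial_3 H$, and expand the exchange terms by the exact cubic Taylor identity. Your one-line identity for $\partial_1 H(\overline M_\alpha)\overline M_{\alpha+1/2}$ is precisely the paper's \eqref{eq:N1}--\eqref{eq:N2} combined with $\partial_1 H(f)f=H(f)+\tfrac{g^2\pi^2}{3}f^3$, and your reading of $G_\alpha$ (so that the cubic coefficient is $G_{\alpha+1/2}-G_{\alpha-1/2}$) is the one consistent with the paper's own macroscopic computation. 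Where you genuinely diverge is the passage to \eqref{eq:energ_kin}. The paper treats the three families of terms separately: the $H$-fluxes telescope by \eqref{eq:Qlim}; the quadratic remainders are nonpositive by the upwind rule \eqref{eq:defN_M} and constitute the right-hand side; and the cubic terms are killed by the explicit moment $\int_\R\tfrac{g^2\pi^2}{3h}\overline M_\alpha^3\,d\xi=\tfrac{g}{2}h$, which is $\alpha$-independent and telescopes. You instead perform an Abel summation over layers first, collapsing the whole exchange contribution to $\tfrac{g^2\pi^2}{2h}\sum_{\alpha}G_{\alpha+1/2}\overline M_{\alpha+1/2}(\overline M_\alpha^2-\overline M_{\alpha+1}^2)$ --- I checked this is exactly what the per-interface recombination gives --- and then use the translation invariance of $\int_\R\overline M_\alpha^3\,d\xi$ and the reflection identity $\int_\R\overline M_\alpha^2\overline M_{\alpha+1}\,d\xi=\int_\R\overline M_\alpha\overline M_{\alpha+1}^2\,d\xi$ (both valid because all layers share the same $h$) to identify it with the announced right-hand side. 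This buys something the paper leaves implicit: it shows the exchange part of the right-hand side of \eqref{eq:energ_kin} is an exact evaluation, not merely an upper bound, and it explains why it can be written with $(\overline M_{\alpha+1}+2\overline M_\alpha)$ uniformly, irrespective of the upwind direction; the paper's route, in exchange, needs no symmetry identities and exhibits the dissipation sign interface by interface.

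The one step where your argument is thinner than you acknowledge is the collision term, and it is also the point where you diverge most from the paper. The paper disposes of it by \eqref{eq:d1H} and \eqref{eq:collisionbis}: $\partial_1 H(\overline M_\alpha)$ is identified with the affine function $u_\alpha\xi-\tfrac{u_\alpha^2}{2}+g\eta$, so the zero-moment conditions make the integral vanish exactly. You rightly note that this identification only holds on $\supp\overline M_\alpha$ and that $\partial_1 H(\overline M_\alpha)$ exceeds the affine function off the support, so only $\int_\R\partial_1 H(\overline M_\alpha)Q_\alpha\,d\xi\leq 0$ should be expected; but ``Gibbs minimization plus \eqref{eq:collisionbis}'' does not by itself give this sign --- writing $\partial_1 H(\overline M_\alpha)=A(\xi)+R(\xi)$ with $A$ affine and $R\geq 0$ supported off $\supp\overline M_\alpha$, the residual $\int_\R R\,Q_\alpha\,d\xi$ can have either sign for a generic zero-moment $Q_\alpha$. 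What closes the argument is the structure of $Q_\alpha$ itself: off $\supp\overline M_\alpha$ the transport terms of \eqref{eq:gibbsbis} vanish, so there $Q_\alpha=-N_{\alpha+1/2}+N_{\alpha-1/2}$, and the upwind choice \eqref{eq:defN_M} forces $N_{\alpha+1/2}\geq 0$ and $N_{\alpha-1/2}\leq 0$ on that set (each $N$ either vanishes there or carries the neighboring Maxwellian with the matching sign of $G$); hence $Q_\alpha\leq 0$ wherever $R>0$ and the integral is nonpositive. With that single observation added, your proof is complete, and on this particular point it is more careful than the paper's, whose claim of exact vanishing rests on reading \eqref{eq:d1H} as a global identity in $\xi$.
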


\begin{proof}
[Proof of Proposition~\ref{prop:entropy_kin}]
Kinetic energy inequality \eqref{eq:kin_entro1} is obtained by multiplying the kinetic equation~\eqref{eq:gibbsbis} by
$\partial_1 H(\overline{M}_\alpha,\xi,z_b)$, where $\partial_i$
denotes the derivative in the first $i^{th}$ variable.
Indeed, it is easy to see that (remember the topography $z_b$ does not depend on time)
$$\partial_1  H(\overline{M}_\alpha,\xi,z_b)
\frac{\partial M_\alpha}{\partial t} = 
\frac{\partial}{\partial t} H(\overline{M}_\alpha,\xi,z_b),
$$
likewise we have
$$\xi \partial_1 H(\overline{M}_\alpha,\xi,z_b)
\frac{\partial M_\alpha}{\partial x} =   \xi
\frac{\partial}{\partial x} 
  H(\overline{M}_\alpha,\xi,z_b) - \xi \partial_3 H(\overline{M}_\alpha,\xi,z_b) \frac{\partial z_b}{\partial x},
$$
and
$$- g \frac{\partial z_b}{\partial x} 
\partial_1 H(\overline{M}_\alpha,\xi,z_b)
\frac{\partial M_\alpha}{\partial \xi} = - 
g \frac{\partial z_b}{\partial x}
\frac{\partial}{\partial \xi}
H(\overline{M}_\alpha,\xi,z_b)
+ g \frac{\partial z_b}{\partial x}
\partial_2 H(\overline{M}_\alpha,\xi,z_b).
$$
But it follows from definition \eqref{eq:kinH} of the kinetic energy that
$$
g \frac{\partial z_b}{\partial x}
\partial_2 H(\overline{M}_\alpha,\xi,z_b)
- \xi \partial_3 H(\overline{M}_\alpha,\xi,z_b) \frac{\partial z_b}{\partial x}=0.
$$
It remains to obtain a suitable expression for the quantity $\partial_1 H(\overline{M}_\alpha,\xi,z_b) N_{\alpha+1/2}$. 
Let's denote
$$N_\alpha = \frac{1}{2}(N_{\alpha+1/2} + N_{\alpha-1/2}), \qquad G_\alpha = \frac{1}{2}(G_{\alpha+1/2} + G_{\alpha-1/2}),$$
it follows from \eqref{eq:defN_M} that
\begin{multline}
\partial_1 H(\overline{M}_\alpha,\xi,z_b) ( N_{\alpha+1/2} -
N_\alpha) = \frac{G_{\alpha+1/2}}{h} \partial_1 H(\overline{M}_\alpha,\xi,z_b) ( \overline M_{\alpha+1/2} -
\overline M_\alpha) \\
+ \frac{G_{\alpha+1/2} - G_\alpha}{h}
\partial_1 H(\overline{M}_\alpha,\xi,z_b) \overline M_\alpha,
\label{eq:N1}
\end{multline}
and the definition \eqref{eq:kinH} of $H_\alpha(\overline{M}_\alpha,\xi,z_b)$ gives us
\begin{multline}
H(\overline{M}_{\alpha+1/2},\xi,z_b) =
H(\overline{M}_{\alpha},\xi,z_b) + 
\partial_1 H(\overline{M}_\alpha,\xi,z_b) ( \overline M_{\alpha+1/2} -
\overline M_\alpha) \\
+\frac{g^2\pi^2}{6}
(\overline{M}_{\alpha+1/2}+2\overline{M}_{\alpha})(\overline{M}_{\alpha+1/2}-\overline{M}_{\alpha})^2.
\label{eq:N2}
\end{multline}
Relation~\eqref{eq:N1} together with~\eqref{eq:N2} leads to
\begin{multline*}
\partial_1 H(\overline{M}_\alpha,\xi,z_b) ( N_{\alpha+1/2} -
N_\alpha) = \frac{G_{\alpha+1/2}}{h}
H(\overline{M}_{\alpha+1/2},\xi,z_b) - \frac{G_{\alpha}}{h} H(\overline{M}_{\alpha},\xi,z_b)  \\
-\frac{g^2\pi^2}{6}\frac{G_{\alpha+1/2}}{h}
(\overline{M}_{\alpha+1/2}+2\overline{M}_{\alpha})(\overline{M}_{\alpha+1/2}-\overline{M}_{\alpha})^2
+ \frac{g^2\pi^2}{3}\frac{G_{\alpha+1/2} - G_\alpha}{h}\overline M_\alpha^3,
\label{eq:N3}
\end{multline*}
and the same kind of relation occurs for $\partial_1 H_\alpha(\overline{M}_\alpha,\xi,z_b) ( N_{\alpha-1/2} - N_\alpha)$. 
Kinetic energy inequality per layer \eqref{eq:kin_entro1} obviously follows.\\

In order to derive the global energy inequality \eqref{eq:energ_kin}, let's now detail the right hand side of relation \eqref{eq:kin_entro1}, the left hand side being treated using integral relations \eqref{eq:kinmomE}. The first line involves vertical kinetic exchange terms that vanish when summing on the layers. Second and third lines involve nonpositive terms due to the upwind definition \eqref{eq:defN_M} of the interface Maxwellian $\overline{M}_{\alpha+1/2}$. They will lead to the nonpositive right hand side in relation \eqref{eq:energ_kin}. Finally, the terms in the last line have no particular sign at the kinetic level. But after integration in $\xi$, one observes that 
$$
\int_\R \frac{g^2\pi^2}{3}
\frac{(G_{\alpha+1/2}-G_{\alpha-1/2})}{h}\overline{M}^3_{\alpha} d\xi
= 
\frac{g}{2}h (G_{\alpha+1/2}-G_{\alpha-1/2})
$$
that can be interpreted as a macroscopic vertical exchange term and then vanishes when summing on the layers. Moreover, due to the particular choice \eqref{eq:kinmaxw} for the kinetic maxwellian $\overline{M}_{\alpha}$, one has
\begin{equation}
\partial_1 H(\overline{M}_\alpha,\xi,z_b) = \frac{\xi^2}{2}+\frac{g^2\pi^2}{2}\overline{M}_\alpha^2+gz_b=\frac{\xi^2}{2}+g\eta-\frac{(\xi-u_\alpha)^2}{2}=-\frac{u_\alpha^2}{2}+g\eta+u_\alpha \xi
\label{eq:d1H}
\end{equation}
and it hence follows from integral relations \eqref{eq:collisionbis} on the collision term that the last term on the right hand side of \eqref{eq:kin_entro1} vanishes when integrating in $\xi$.
\end{proof}

We end this section with a last result that extends to the present layer-averaged framework a  subdifferential inequality and an energy minimization principle that were exhibited in the classical shallow water framework in \cite{JSM_entro}, see also \cite{BGK} for the first use of this approach. This result will be used in the next section to extend the entropy inequality to the fully discrete case.\\


\begin{lemma}
\label{lemma:energy}
\noindent (i) For any $h_\alpha\geq 0$, $u_\alpha\in\R$, $f\geq 0$,
$\xi\in\R$ and with the definition of $E_\alpha$ given by~\eqref{eq:kinmomE}
\begin{equation}
	H(f,\xi,z_b)\geq H\bigl(\overline{M}_\alpha,\xi,z_b\bigr)+E_\alpha^\prime(U_\alpha)\kxi
	\bigl(f-\overline{M}_\alpha\bigr).
	\label{eq:subdiff}
\end{equation}

\noindent (ii) For any function $f(\xi)$ nonnegative, setting $h_\alpha=\int_\R
f(\xi)d\xi$ and $h_\alpha u_\alpha=\int_\R \xi f(\xi)d\xi$ (assumed finite), one has
\begin{equation*}
	E_\alpha 
	\leq l_\alpha \int_\R H\bigl(f(\xi),\xi,z_b\bigr)\,d\xi.
	\label{eq:entropmini}
\end{equation*}
\end{lemma}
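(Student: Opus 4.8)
The plan is to derive both assertions from a single fact: the kinetic entropy $H(\cdot,\xi,z_b)$ defined in \eqref{eq:kinH} is convex in its first argument. First I would observe that, at fixed $\xi$ and $z_b$, the map $f\mapsto H(f,\xi,z_b)$ satisfies $\partial_1^2 H(f,\xi,z_b)=g^2\pi^2 f\geq 0$ for $f\geq 0$, so it is convex on $[0,\infty)$. The tangent (subgradient) inequality at the point $\overline{M}_\alpha$ then reads
$H(f,\xi,z_b)\geq H(\overline{M}_\alpha,\xi,z_b)+\partial_1 H(\overline{M}_\alpha,\xi,z_b)\,(f-\overline{M}_\alpha)$,
which is exactly \eqref{eq:subdiff} once the slope $\partial_1 H(\overline{M}_\alpha,\xi,z_b)$ is identified with $E_\alpha^\prime(U_\alpha)\kxi$. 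To establish that identification I would, on one hand, insert the explicit Gibbs equilibrium \eqref{eq:kinmaxw} into $\partial_1 H(\overline{M}_\alpha,\xi,z_b)=\frac{\xi^2}{2}+\frac{g^2\pi^2}{2}\overline{M}_\alpha^2+gz_b$, which is precisely computation \eqref{eq:d1H} and collapses to the affine function $u_\alpha\xi-\frac{u_\alpha^2}{2}+g\eta$; on the other hand I would differentiate $E_\alpha$, written as a function of $U_\alpha=(h_\alpha,h_\alpha u_\alpha)^T$ through the relation $h=h_\alpha/l_\alpha$, to obtain $E_\alpha^\prime(U_\alpha)=\bigl(g\eta-\tfrac{u_\alpha^2}{2},\,u_\alpha\bigr)$, so that $E_\alpha^\prime(U_\alpha)\kxi$ reproduces the same affine function of $\xi$. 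This proves (i). Note that (i) needs no moment hypothesis on $f$: it is pure pointwise convexity.

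For (ii) I would integrate the pointwise inequality (i) against $d\xi$. The constant term gives $\int_\R H(\overline{M}_\alpha,\xi,z_b)\,d\xi=E_\alpha/l_\alpha$ by the energy identity \eqref{eq:kinmomE}. The remaining term is $E_\alpha^\prime(U_\alpha)\int_\R\kxi\bigl(f-\overline{M}_\alpha\bigr)\,d\xi$; since $E_\alpha^\prime(U_\alpha)$ is independent of $\xi$, it reduces to the difference of the zeroth and first $\xi$-moments of $f$ and of the equilibrium. The normalization imposed on $f$ fixes $\int_\R\kxi f\,d\xi=(h_\alpha,h_\alpha u_\alpha)^T=U_\alpha$, while the moment relations \eqref{eq:kinmomM} show that the equilibrium entering $E_\alpha$ carries exactly the same zeroth and first moments, so this linear term vanishes. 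What is left is $\int_\R H(f,\xi,z_b)\,d\xi\geq E_\alpha/l_\alpha$, and multiplying by $l_\alpha$ yields the announced energy minimization principle.

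The only genuinely delicate point is the algebraic step in (i): checking that the slope of $H$ at the Gibbs equilibrium equals the differential of the macroscopic energy contracted with $(1,\xi)^T$. This is where the specific cubic shape of $H$ in \eqref{eq:kinH} and the semicircular profile \eqref{eq:chi0}--\eqref{eq:kinmaxw} are used, and where one must track the layer fraction $l_\alpha$ and the relation $h_\alpha=l_\alpha h$ so that the factors $l_\alpha$ appearing in $H_\alpha=l_\alpha H$, in $E_\alpha$, and in the moment relations cancel consistently; the chain-rule factor $dh/dh_\alpha=1/l_\alpha$ is precisely what compensates the explicit $l_\alpha$ in \eqref{eq:kinmomE}. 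This same bookkeeping is what makes the subgradient term of (ii) integrate to zero. Everything else is the convexity inequality and the linearity of the moment constraints, both of which are routine.
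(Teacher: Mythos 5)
Your strategy for (ii) and your overall plan for (i) --- convexity of $H$ in $f$ plus a comparison of the slope $\partial_1 H$ at the Maxwellian with $E_\alpha^\prime(U_\alpha)\kxi$ --- follow the paper, and your moment bookkeeping (including the $l_\alpha$ chain-rule factor) is correct. But there is a genuine gap in (i): the claimed identification $\partial_1 H\bigl(\overline{M}_\alpha,\xi,z_b\bigr)=E_\alpha^\prime(U_\alpha)\kxi$ is \emph{false} outside the support of the Maxwellian, i.e.\ on the unbounded set of $\xi$ where $(\xi-u_\alpha)^2>2gh$ and hence $\overline{M}_\alpha(\xi)=0$. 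The computation \eqref{eq:d1H} that you invoke rests on the identity $\frac{g^2\pi^2}{2}\overline{M}_\alpha^2=gh-\frac{(\xi-u_\alpha)^2}{2}$, which is a consequence of \eqref{eq:kinmaxw} only where the positive part is active; where $\overline{M}_\alpha=0$ one has instead $\partial_1 H(0,\xi,z_b)=\frac{\xi^2}{2}+gz_b$, whereas $E_\alpha^\prime(U_\alpha)\kxi=\frac{\xi^2}{2}+gz_b+gh-\frac{(\xi-u_\alpha)^2}{2}$ is strictly smaller there. Since \eqref{eq:subdiff} is a pointwise statement in $\xi$, you cannot obtain it from the tangent inequality by substituting one slope for the other: on that region the two right-hand sides genuinely differ.

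The gap is fixable, and the fix is exactly where the hypothesis $f\geq 0$ enters in an essential way --- contrary to your closing remark that (i) is ``pure pointwise convexity'' with no role for the sign of $f$ beyond the domain of convexity. Where $\overline{M}_\alpha=0$ one has the one-sided bound $\partial_1 H\bigl(\overline{M}_\alpha,\xi,z_b\bigr)\geq E_\alpha^\prime(U_\alpha)\kxi$ together with $f-\overline{M}_\alpha=f\geq 0$, so the tangent term can still be bounded below: $\partial_1 H\bigl(\overline{M}_\alpha,\xi,z_b\bigr)\bigl(f-\overline{M}_\alpha\bigr)\geq E_\alpha^\prime(U_\alpha)\kxi\bigl(f-\overline{M}_\alpha\bigr)$. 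This case distinction --- equality of slopes on the support of $\overline{M}_\alpha$, inequality off it, combined with the sign of $f$ --- is precisely the content of \eqref{eq:idH'} and of the two-step chain \eqref{eq:convineqH_0} in the paper's proof. Once you add it, your argument coincides with the paper's; part (ii) then goes through verbatim as you wrote it.
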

\begin{proof}
The property (ii) follows from (i) by taking $f=f(\xi)$ and integrating \eqref{eq:subdiff}
with respect to $\xi$ since $f$ and $\overline{M}_\alpha$ share the same first two moments. For proving (i), we first notice that (remember that $h_\alpha=l_\alpha h$)
\begin{equation*}
	E_\alpha^\prime(U_\alpha)=\bigl(gh + g z_b -u_\alpha^2/2,u_\alpha\bigr),
	\label{eq:etaprime}
\end{equation*}
and then 
\begin{equation*}
	E_\alpha^\prime(U_\alpha)\kxi
	=gh +g z_b-u_\alpha^2/2+\xi u_\alpha=\frac{\xi^2}{2}+g h +g z_b-\frac{(\xi-u_\alpha)^2}{2}.
	\label{eq:etaprime1xi}
\end{equation*}
Now the definition \eqref{eq:kinmaxw} of the maxwellian $\overline{M}_\alpha$ yields
\begin{equation*}
	gh - \frac{(\xi-u_\alpha)^2}{2}=\left\{\begin{array}{l}
	\dsp \frac{g^2\pi^2}{2}\overline{M}_\alpha^2\quad\mbox{if } \overline{M}_\alpha>0,\\
	\dsp\mbox{is nonpositive\quad if } \overline{M}_\alpha=0,
	\end{array}\right.
	\label{eq:idM}
\end{equation*}
Using relation \eqref{eq:d1H}, it follows that
\begin{equation}
	\partial_1 H\bigl(\overline{M}_\alpha,\xi,z_b\bigr)=\left\{\begin{array}{l}
	\dsp E^\prime_\alpha\kxi\quad\mbox{if }\overline{M}_\alpha>0,\\
	\dsp\geq E^\prime_\alpha \kxi \quad\mbox{if } \overline{M}_\alpha=0.
	\end{array}\right.
	\label{eq:idH'}
\end{equation}
We conclude using the convexity of $H_\alpha$ with respect to $f$, see definition \eqref{eq:kinH},
\begin{equation}\begin{array}{l}
	\dsp H(f,\xi,z_b)\geq
        H\bigl(\overline{M}_\alpha,\xi,z_b\bigr)+\partial_1 H\bigl(\overline{M}_\alpha,\xi,z_b\bigr)\bigl(f-\overline{M}_\alpha\bigr)\\
	\dsp \hphantom{H(f,\xi,z_b)}
	\geq H\bigl(\overline{M}_\alpha,\xi,z_b\bigr)+E^\prime_\alpha\kxi
	\bigl(f-\overline{M}_\alpha\bigr),
	\label{eq:convineqH_0}
	\end{array}
\end{equation}
which proves the claim.
\end{proof}

\subsubsection{Discrete model}

The method proposed in \cite{JSM_entro} in order to solve the Saint-Venant
system from its kinetic interpretation can be extended to the
system~\eqref{eq:HH}-\eqref{eq:eq44} and its kinetic interpretation given
in Proposition~\ref{prop:kinetic_sv_mcl}. It is the purpose of this
paragraph.

We would like to approximate the solution $U(t,x)$, see \eqref{eq:stateU}, $x \in \R$, $t
\geq 0$ of the system~\eqref{eq:HH}-\eqref{eq:eq44} by discrete values $U_i^n$, $i \in
\mathbb{Z}$, $ n \in \mathbb{N}$. In order to do so, we consider a grid of points $x_{i+1/2}$, $i \in
\mathbb{Z}$,
$$\ldots < x_{i-1/2} < x_{i+1/2} <  x_{i+3/2} < \ldots,$$
and we define the cells (or finite volumes) and their lengths
$$C_i = ]x_{i-1/2},x_{i+1/2}[,\qquad \Delta x_i = x_{i+1/2} - x_{i-1/2}.$$
We consider discrete times $t^n$ with $t^{n+1}=t^n+\Delta t^n$, and
we define the piecewise constant functions $U^n(x)$ corresponding to time $t^n$ and $z(x)$ as
\begin{equation*}
	U^n(x)= U^n_i= (h_i^n, q^n_{1,i}\ldots, q^n_{N,i})^T,\quad z_b(x)= z_{b,i},\quad\mbox{ for }x_{i-1/2}<x<x_{i+1/2}.
	\label{eq:U^npc}
\end{equation*}

A finite volume scheme for solving~\eqref{eq:HH}-\eqref{eq:eq44} is a formula of the form
\begin{equation}
	U^{n+1}_i=U^n_i-\sigma_i(F_{i+1/2-}-F_{i-1/2+}) + \Delta t^n S_i,
	\label{eq:upU0}
\end{equation}
where $\sigma_i=\Delta t^n/\Delta x_i$, telling how to compute the
values $U^{n+1}_i$ knowing $U_i^n$ and discretized values $z_{b,i}$ of
the topography. The quantity $S_i$ is a source term accounting for
the discrete momentum exchange terms between each layer
in~\eqref{eq:eq44}. Here we consider first-order explicit three points schemes where
\begin{equation*}
	F_{i+1/2-} = \F_l(U_i^n,U_{i+1}^n,z_{i+1}-z_i),\qquad F_{i+1/2+} = \F_r(U_i^n,U_{i+1}^n,z_{i+1}-z_i).
\label{eq:flux_def}
\end{equation*}
The functions $\F_{l/r}(U_l,U_r,\Delta z)\in \R^2$ are the numerical fluxes, see \cite{bouchut_book}.\\

The proposed discrete scheme is based on the equivalence between the kinetic and the macroscopic levels stated in Proposition \ref{prop:kinetic_sv_mcl} and can be divided into three steps
\begin{itemize}
\item To construct the discrete kinetic maxwellian $M_{\alpha,i}^n$
  starting from the macroscopic quantities $U_i^n$ and the definition \eqref{eq:kinmaxw}
\begin{equation}	
	M^{n}_{\alpha,i}(\xi) = \frac{l_\alpha}{g\pi}\left(2gh^{n}_{\alpha,i}-(\xi-u^{n}_{\alpha,i})\right)^{1/2}_+
	\label{eq:Mnalphai}
\end{equation}
\item To update the kinetic quantities through a finite volume scheme that will be precised hereafter to compute the quantities $M_{\alpha,i}^{n+1,-}$. This step is in general performed without considering the collision term and it follows that $M_{\alpha,i}^{n+1,-}$ is no more of a maxwellian.
\item To compute the new macroscopic quantities $U_i^{n+1}$ as the integral of the kinetic quantities $M_{\alpha,i}^{n+1,-}$
\begin{equation*}
	U^{n+1}_{\alpha,i}=\int_\R \kxi M^{n+1-}_{\alpha,i}(\xi)\,d\xi.
	\label{eq:Un+1-i}
\end{equation*}
The difference between $M_{\alpha,i}^{n+1,-}$ and $M_{\alpha,i}^{n+1}$, computed using \eqref{eq:Mnalphai} at time $t^{n+1}$, can be seen as an instantaneous relaxation on the maxwellian, see \cite{bouchut_book}.
\end{itemize}
Such a kind of kinetic scheme was presented for the classical shallow water system in \cite{bristeau}, see also \cite{bouchut_book}, and for the layer-averaged system \eqref{eq:HH}-\eqref{eq:eq44} in \cite{JSM_M2AN}. In these works, the kinetic step was fully explicit. Here, and in order to demonstrate stability properties, we propose an implicit-explicit variant that is presented  in details in the next section. 
Note that for practical computations, the integration processes are not performed on the cell unknowns but directly to compute macroscopic fluxes, as it will be explained hereafter. It follows the presented scheme can be entirely written at the macroscopic level, avoiding expensive computations at the kinetic level. Nevertheless, the kinetic interpretation is an efficient way to demonstrate the properties of the scheme.

\subsubsection{Discrete kinetic equation}

Let us now detail the kinetic scheme we propose. It can be written in a one-step version
\begin{multline}
M_{\alpha,i}^{n+1-}=M_{\alpha,i}-\sigma_i\Bigl( \xi M_{\alpha,i+1/2} +
        \delta M_{\alpha,i+1/2-} - \xi M_{\alpha,i-1/2} - \delta M_{\alpha,i-1/2+}
	\Bigr) \\
+  \Delta t^n ( N_{\alpha+1/2,i}^{n+1-} - N_{\alpha-1/2,i}^{n+1-})
\label{eq:kinsc}
\end{multline}
or divided into an explicit and an implicit steps
\begin{align}
&M_{\alpha,i}^{n*}=M_{\alpha,i}-\sigma_i\Bigl( \xi M_{\alpha,i+1/2} +
        \delta M_{\alpha,i+1/2-} - \xi M_{\alpha,i-1/2} - \delta M_{\alpha,i-1/2+}
	\Bigr) \label{eq:kinsc1}\\
&M_{\alpha,i}^{n+1-} = M_{\alpha,i}^{n*}    +    \Delta t^n ( N_{\alpha+1/2,i}^{n+1-} - N_{\alpha-1/2,i}^{n+1-})
\label{eq:kinsc2}
\end{align}
with $\sigma_i=\Delta t^n/\Delta x_i$. To simplify the notations, we
omit the variable $\xi$ and the superscript $^n$. The quantities $M_{\alpha,i\pm
  1/2}$ and $\delta M_{\alpha,i+1/2\pm}$ respectively account for the
conservative part and the topography source term, their definitions
will be precised later.
The definition of the quantity $N_{\alpha+1/2,i}^{n+1-}$ 
requires a discrete extension of relation~\eqref{eq:defN_M}
\begin{equation}
N_{\alpha+1/2,i}^{n+1-} = \frac{G_{\alpha+1/2,i}}{h_i^{n*}} \overline{M}_{\alpha+1/2,i}^{n+1-} , \qquad 
\overline{M}_{\alpha+1/2_i}^{n+1-} = 
\left\{\begin{array}{ll}
\overline{M}_{\alpha,i}^{n+1-}& \mbox{if } G_{\alpha+1/2,i}^{n*} \leq 0\\
\overline{M}_{\alpha+1}^{n+1-} & \mbox{if } G_{\alpha+1/2,i}^{n*} \geq 0
\end{array}\right.
\label{eq:defN_Mbis}
\end{equation}
where the discrete mass exchange term $G_{\alpha+1/2,i}$
is computed using a discrete version of relation \eqref{eq:Qalphabis2}
\begin{multline}
\Delta x_i G_{\alpha+1/2,i} = \sum_{j=1}^\alpha \left(  \int_\R \xi
  (M_{j,i+1/2} - M_{j,i-1/2}) d\xi \right.\\
 \left. - l_j \sum_{p=1}^N  \int_\R \xi (M_{p,i+1/2} - M_{p,i-1/2}) d\xi
     \right).
\label{eq:Gdis3}
\end{multline}

\begin{remark}
The analysis of the behaviour of the quantity $N_{\alpha+1/2}$ when the water height vanishes is much more easy at the discrete level than at the continuous one, see remark~\ref{remark:dirac}.
Indeed, thanks to the choice of an implicit time discretization, we can prove that this quantity remains bounded if the time step does not vanish, that will be proved in the next section, see Th. \ref{thm:entropy_st}.
To prove the result, let us first note that the total water depth is not affected by the implicit step \eqref{eq:kinsc2} that takes into account the vertical exchange terms. It follows that $h_i^{n+1}=h_i^{n*}$ but also that it is not the case for the quantities $h_{\alpha,i}^{n+1}$ and $h_{\alpha,i}^{n*}$.
Now, considering Eq.~\eqref{eq:kinsc2} for the lowest layer and using the fact that $N_{1/2,i}^{n+1-}=0$ by definition, we get
$$\Delta t^n N_{3/2,i}^{n+1-} = M_{1,i}^{n+1-} - M_{1,i}^{n*},$$
and hence after integration in $\xi$ it comes
$$\Delta t^n l_1 G_{3/2,i} = h_{1,i}^{n+1} - h_{1,i}^{n*} = l_1 h_{i}^{n+1} - h_{1,i}^{n*}.$$
Since $\sum_{j=1}^N h_{j,i}^{n*} = h_{i}^{n*} = h_{i}^{n+1}$, we have $h_{1,i}^{n*} \leq h_{i}^{n+1}$ and this gives us the estimate
$$1 -\frac{1}{l_1} \leq \Delta t^n  \frac{G_{3/2,i}}{h_{i}^{n+1}} \leq 1.$$
Using the same process for each layer from the bottom to the top, one can prove the quantity 
${G_{\alpha+1/2,i}}/{h_{i}^{n+1}}$ is bounded for any $\alpha$, even when the water depth vanishes.
\label{rem:G_bound_dis}
\end{remark}

The explicit step \eqref{eq:kinsc1} is very similar to the kinetic scheme proposed in \cite{bristeau} and analysed in \cite{JSM_entro} for the classical shallow water problem.
We first prove hereafter that the implicit step \eqref{eq:kinsc2} leads to a well posed problem.
Then, in the next section, we prove the stability properties of the whole scheme \eqref{eq:kinsc}.
Using \eqref{eq:defN_Mbis} and \eqref{eq:Gdis3}, the implicit step \eqref{eq:kinsc2} can be written
\begin{align}
&- \Delta
t^n\frac{|G_{\alpha+1/2,i}|_+}{h_{\alpha+1,i}^{n+1}}
M_{\alpha+1,i}^{n+1-} + \left( 1 - \Delta t^n\frac{|G_{\alpha+1/2,i}|_- -
    |G_{\alpha-1/2,i}|_+}{h_{\alpha,i}^{n+1}}\right)
M_{\alpha,i}^{n+1-} \nonumber\\
&\qquad \qquad \qquad \qquad \qquad \qquad \qquad \qquad \qquad \qquad + \Delta
t^n\frac{|G_{\alpha-1/2,i}|_-}{h_{\alpha-1,i}^{n+1}}
M_{\alpha-1,i}^{n+1-} = M_{\alpha,i}^{n*} 
\label{eq:kin_st1}
\end{align}
that is equivalent to solve the linear system
\begin{equation}
\left({\bf I}_N+\Delta t G_{N,i}\right) M_{i}^{n+1-} = M_{i}^{n*}, \qquad M^k_i = \left(M^k_{1,i},...,M^k_{N,i}\right)^T
\label{eq:kinls}
\end{equation}
with
$$G_{N,i}=\begin{pmatrix}
-\frac{|G_{3/2,i}|_-}{h_{1,i}^{n+1}} &
-\frac{|G_{3/2,i}|_+}{h_{1,i}^{n+1}} & 0 & 0 & \cdots  & 0\\
\frac{|G_{3/2,i}|_-}{h_{2,i}^{n+1}} & \ddots & \ddots & 0 & \cdots &
0\\
0 & \ddots & \ddots & \ddots & 0 & 0\\
\vdots  & 0 & \frac{|G_{\alpha-1/2,i}|_-}{h_{\alpha,i}^{n+1}} &
-\frac{|G_{\alpha+1/2,i}|_- -
    |G_{\alpha-1/2,i}|_+}{h_{\alpha,i}^{n+1}}
& -\frac{|G_{\alpha+1/2,i}|_+}{h_{\alpha,i}^{n+1}} & 0 \\
\vdots & \ddots & 0 & \ddots & \ddots & -\frac{|G_{N-1/2,i}|_+}{h_{N-1,i}^{n+1}}\\
0 & \cdots & 0 & 0 & \frac{
    |G_{N-1/2,i}|_-}{h_{N,i}^{n+1}} & \frac{
    |G_{N-1/2,i}|_+}{h_{N,i}^{n+1}}
\end{pmatrix}.$$
\begin{lemma}
The matrix ${\bf A}_{N,i}={\bf I}_N + \Delta t {\bf G}_{N,i}$ satisfies the following properties
\begin{itemize}
\item[{\it (i)}] The matrix ${\bf A}_{N,i}$ is invertible for any $h_i^{n+1}>0$ and then the linear system \eqref{eq:kinls} has a unique solution.
\item[{\it (ii)}] Its inverse ${\bf A}_{N,i}^{-1}$ 
has only positive coefficients and then the kinetic density $M_{i}^{n+1-}$ is positive if $M_{i}^{n*}$ is.
\item[{\it (iii)}] For any vector $T$ with non negative entries
  i.e. $T_\alpha \geq 0$, for $1\leq \alpha\leq N$, one has
$$\| {\bf A}_{N,i}^{-T} T \|_\infty \leq \|T\|_\infty.$$
and then the solution of the linear system \eqref{eq:kinls} does not raise difficulties for any $h_i>0$ even if $h_i$ is arbitrarily small.
\end{itemize}
\label{lem:inverse}
\end{lemma}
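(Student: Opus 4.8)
The plan is to show that the matrix $\mathbf{A}_{N,i}=\mathbf{I}_N+\Delta t\,\mathbf{G}_{N,i}$ is a nonsingular M-matrix and that its columns sum to one; all three assertions then follow at once. Concretely, I would establish two structural facts about the tridiagonal matrix read off from \eqref{eq:kin_st1}: that it is a \emph{Z-matrix} (nonnegative diagonal, nonpositive off-diagonals) and that each of its columns sums to exactly $1$, i.e. $\mathbf{A}_{N,i}^{T}\mathbf{e}=\mathbf{e}$ with $\mathbf{e}=(1,\ldots,1)^{T}$. Once these are in hand, the semipositivity characterization of M-matrices (a Z-matrix $\mathbf{M}$ with $\mathbf{M}x>0$ for some $x>0$ is a nonsingular M-matrix, with $\mathbf{M}^{-1}\geq 0$) applied to $\mathbf{A}_{N,i}^{T}$ delivers (i)--(iii).

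For the Z-matrix property I would expand the upwind definition \eqref{eq:defN_Mbis} using the splitting $G=|G|_+ +|G|_-$ with $|G|_+=\max(G,0)\geq 0$ and $|G|_-=\min(G,0)\leq 0$, writing $N_{\alpha+1/2}=\frac{|G_{\alpha+1/2}|_+}{h}\overline M_{\alpha+1}+\frac{|G_{\alpha+1/2}|_-}{h}\overline M_\alpha$ and using $\overline M_\alpha/h=M_\alpha/h_{\alpha}$ (valid since $h_{\alpha,i}^{n+1}=l_\alpha h_i^{n+1}$, so the denominators are well defined precisely when $h_i^{n+1}>0$). This reproduces the entries of \eqref{eq:kin_st1}: the super- and sub-diagonal entries $-\Delta t\,|G_{\alpha+1/2}|_+/h_{\alpha+1}$ and $\Delta t\,|G_{\alpha-1/2}|_-/h_{\alpha-1}$ are both $\leq 0$, while the diagonal equals $1+\Delta t\,\bigl(-|G_{\alpha+1/2}|_- +|G_{\alpha-1/2}|_+\bigr)/h_\alpha\geq 1$. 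The boundary rows $\alpha=1,N$ are checked separately using $G_{1/2}=G_{N+1/2}=0$, which kill the missing neighbours.

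The column-sum computation is the crux: summing the three entries sitting in column $\alpha$ (the diagonal of row $\alpha$ together with the sub-diagonal of row $\alpha+1$ and the super-diagonal of row $\alpha-1$) produces a telescoping cancellation, so each column sum is exactly $1$. This is nothing but the discrete mass conservation already noted in Remark~\ref{rem:G_bound_dis}, namely that the implicit step leaves the total depth unchanged, $h_i^{n+1}=h_i^{n*}$; I would state it in that form to make the bookkeeping transparent. Hence $\mathbf{A}_{N,i}^{T}\mathbf{e}=\mathbf{e}>0$, and since $\mathbf{A}_{N,i}^{T}$ is also a Z-matrix, it is a nonsingular M-matrix. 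This gives invertibility, proving (i), and $\mathbf{A}_{N,i}^{-1}=\bigl(\mathbf{A}_{N,i}^{-T}\bigr)^{T}\geq 0$ entrywise, proving (ii) (with strict positivity when every $G_{\alpha+1/2,i}\neq 0$, by irreducibility of the tridiagonal structure; when some exchange term vanishes the system decouples and nonnegativity is what is needed).

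For (iii) I would combine nonnegativity of $\mathbf{A}_{N,i}^{-T}$ with the identity $\mathbf{A}_{N,i}^{-T}\mathbf{e}=\mathbf{e}$, obtained by applying $\mathbf{A}_{N,i}^{-T}$ to $\mathbf{A}_{N,i}^{T}\mathbf{e}=\mathbf{e}$. Given $T\geq 0$ one has the componentwise squeeze $0\leq T\leq \|T\|_\infty\,\mathbf{e}$; applying the nonnegative matrix $\mathbf{A}_{N,i}^{-T}$ preserves the ordering, so $0\leq \mathbf{A}_{N,i}^{-T}T\leq \|T\|_\infty\,\mathbf{A}_{N,i}^{-T}\mathbf{e}=\|T\|_\infty\,\mathbf{e}$, whence $\|\mathbf{A}_{N,i}^{-T}T\|_\infty\leq\|T\|_\infty$. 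I expect the main obstacle to be purely the careful sign bookkeeping of the $|\cdot|_\pm$ splitting and the verification of the telescoping column sums; the conceptual point worth stressing is that the resulting bound is \emph{uniform in} $h_i$, so even arbitrarily small water depth causes no difficulty, exactly because the M-matrix plus unit-column-sum structure controls $\mathbf{A}_{N,i}^{-T}$ independently of the magnitude of its entries.
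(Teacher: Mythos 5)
Your proof is correct, and for items \emph{(i)}--\emph{(ii)} it takes a genuinely different technical route from the paper, although both rest on the same two structural facts that you isolate: the Z-sign pattern of ${\bf A}_{N,i}$ (nonpositive off-diagonals, diagonal $\geq 1$) and the unit column sums ${\bf A}_{N,i}^T {\bf 1}={\bf 1}$, which you verify by the telescoping of the $|\cdot|_\pm$ terms exactly as needed. The paper proves \emph{(i)} by observing that ${\bf A}_{N,i}^T$ is strictly diagonally dominant, and proves \emph{(ii)} by the Jacobi-type splitting ${\bf A}_{N,i}=({\bf I}_N+\Delta t\,{\bf G}_{N,i}^d)({\bf I}_N-{\bf J}_{N,i})$ together with the Neumann series $\sum_k {\bf J}_{N,i}^k$; you instead invoke the semipositivity characterization of nonsingular M-matrices (a Z-matrix ${\bf M}$ with ${\bf M}x>0$ for some $x>0$ is a nonsingular M-matrix with ${\bf M}^{-1}\geq 0$), applied to ${\bf A}_{N,i}^T$ with $x={\bf e}$, and obtain both conclusions in one stroke. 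What each approach buys: the paper's argument is self-contained and constructive, but as written it is slightly loose, since ``entries of ${\bf J}_{N,i}$ nonnegative and less than $1$'' does not by itself guarantee convergence of the Neumann series --- one needs a spectral radius or norm bound, which again follows from the column-sum property you make explicit; your argument is tighter but outsources the work to a standard theorem (Berman--Plemmons). For item \emph{(iii)} your argument --- nonnegativity of ${\bf A}_{N,i}^{-T}$, the identity ${\bf A}_{N,i}^{-T}{\bf e}={\bf e}$, and the componentwise squeeze $0\leq T\leq \|T\|_\infty {\bf e}$ --- is the paper's argument verbatim. A final point in your favor: you note that when some $G_{\alpha+1/2,i}$ vanishes the system decouples and only entrywise nonnegativity (not strict positivity) of the inverse can hold, and that this suffices for the conclusion; the paper's own proof likewise establishes only nonnegativity despite the stronger wording of the lemma.
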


\begin{proof}[Proof of lemma~\ref{lem:inverse}]
\begin{itemize}
\item[{\it (i)}]  Let us first note that if $h_i^{n+1}=0$, we do not solve the linear system \eqref{eq:kinls} but simply impose that $M_i^{n+1-}=0$. Now for any $h_i^{n+1}>0$, the matrix ${\bf A}^T_{N,i}$
is a strictly dominant diagonal matrix. It follows that ${\bf A}_{N,i}$ is invertible.
\item[{\it (ii)}] Denoting ${\bf G}_{N,i}^d$ (resp. ${\bf G}_{N,i}^{nd}$) the diagonal
(resp. non diagonal) part of ${\bf G}_{N,i}$ we can write
$${\bf A}_{N,i} = {\bf I}_N + \Delta t {\bf G}_{N,i} = ({\bf I}_N + \Delta t {\bf
  G}_{N,i}^d) \left( {\bf I}_N - ({\bf I}_N + \Delta t {\bf
    G}_{N,i}^d)^{-1} (-\Delta t G_{N,i}^{nd})\right),$$
where all the entries of the matrix
$${\bf J}_{N,i} = ({\bf I}_N + \Delta t {\bf
    G}_{N,i}^d)^{-1} (-\Delta t {\bf G}_{N,i}^{nd}),$$
are non negative and less than 1. And hence, we can write
$$({\bf I}_N + \Delta t {\bf G}_{N,i})^{-1} = \sum_{k=0}^\infty J_{N,i}^k,$$
proving all the entries of $({\bf I}_N + \Delta t {\bf G}_{N,i})^{-1}$
are non negative.
\item[{\it (iii)}] Let us consider the vector ${\bf 1}$ whose entries are all
equal to 1. Since we have
$$({\bf I}_N + \Delta t {\bf G}_{N,i})^T {\bf 1} = {\bf 1},$$
we also have
$${\bf 1} = ({\bf I}_N + \Delta t {\bf G}_{N,i})^{-T} {\bf 1}.$$
Now let $T$ be a vector whose entries $\{T_\alpha\}_{1 \leq
  \alpha \leq N}$ are non negative, then
$$({\bf I}_N + \Delta t {\bf G}_{N,i})^{-T} {\bf T} \leq ({\bf I}_N +
\Delta t {\bf G}_{N,i})^{-T} {\bf 1} \|{\bf T}\|_\infty = {\bf 1}
\|{\bf T}\|_\infty,$$
that completes the proof.
\end{itemize}
\end{proof}

\section{Properties of the scheme}
\label{sec:properties}

In this section, we examine the properties of the scheme~\eqref{eq:kinsc}.

\subsection{Without topography}

We first consider the problem without topography. 
The scheme~\eqref{eq:kinsc} reduces to
\begin{equation}
\left\{\begin{array}{l}
M_{\alpha,i}^{n*}=M_{\alpha,i}-\sigma_i\xi\Bigl( M_{\alpha,i+1/2} - M_{\alpha,i-1/2}\Bigr) \\
M_{\alpha,i}^{n+1-} = M_{\alpha,i}^{n*}    +    \Delta t^n ( N_{\alpha+1/2,i}^{n+1-} - N_{\alpha-1/2,i}^{n+1-})
\end{array}\right.
\label{eq:kinsc_st}
\end{equation}
with
\begin{eqnarray}
M_{\alpha,i+1/2} & = &
\1_{\xi>0}M_{\alpha,i}+\1_{\xi<0}M_{\alpha,i+1},\label{eq:Mupwind1}\\
M_{\alpha,i-1/2} & = & \1_{\xi>0}M_{\alpha,i-1}+\1_{\xi<0}M_{\alpha,i}. 
\label{eq:Mupwind2}
\end{eqnarray}
The discrete kinetic equations~\eqref{eq:kinsc_st} allow to precise
the numerical fluxes in~\eqref{eq:upU0} having the form
\begin{equation*}
F_{i+1/2-}= (\sum_{\alpha=1}^N
F_{h_\alpha,i+1/2-},F_{q_1,i+1/2-},\ldots,F_{q_N,i+1/2-})^T,
\label{eq:HRflux_st12}
\end{equation*}
with
\begin{eqnarray*}
	\dsp F_{h_\alpha,i+1/2-} & = & \int_{\R} \xi M_{\alpha,i+1/2} d\xi = \int_{\xi> 0} \xi M_{\alpha,i} d\xi + \int_{\xi< 0} \xi M_{\alpha,i+1} d\xi, \label{eq:HRflux_st1}\\
	\dsp F_{q_\alpha,i+1/2-} & = & \int_{\R} \xi^2 M_{\alpha,i+1/2} d\xi
= \int_{\xi> 0} \xi^2 M_{\alpha,i} d\xi + \int_{\xi< 0} \xi^2
      M_{\alpha,i+1} d\xi.
	\label{eq:HRflux_st2}
\end{eqnarray*}
Note that without topography the flux are conservative since 
$$
F_{i+1/2-}=F_{i+1/2+}
$$
It won't be the case when we will introduce the topography in Section \ref{sec:withtopo}.
The source term $S_i$ in~\eqref{eq:upU0} is defined by
\begin{equation}
S_i = u_{\alpha+1/2,i} G_{\alpha+1/2,i} - u_{\alpha-1/2,i}
G_{\alpha-1/2,i},
\label{eq:Si}
\end{equation}
where $u_{\alpha+1/2,i}$ is defined by~\eqref{eq:upwind_uT} and $G_{\alpha+1/2,i}$
is given by expression~\eqref{eq:Gdis3} that can be rewritten under
the form
\begin{equation*}
\Delta x_i G_{\alpha+1/2,i} = \sum_{j=1}^\alpha \left(
  F_{h_j,i+1/2-} - F_{h_j,i-1/2+} - l_j \sum_{p=1}^N  (F_{h_p,i+1/2-}  - F_{h_p,i-1/2+} )
     \right).
\label{eq:Gdis4}
\end{equation*}


In the following proposition, we prove fundamental stability properties for the
numerical scheme~\eqref{eq:kinsc_st}.

\begin{theorem}
Under the CFL condition
\begin{equation}
\Delta t^n \leq 
\frac{1}{2}
\min_{1 \leq \alpha \leq N}\min_{i \in I}
\frac{\Delta x_i}{|u_{\alpha,i}| + \sqrt{2 g h_i}
 },
\label{eq:CFLfull}
\end{equation}
the scheme \eqref{eq:kinsc_st} satisfies the following properties
\begin{itemize}
\item[\it{(i)}] The kinetic functions remain nonnegative
$M^{n+1-}_{\alpha,i}\geq 0$, $\forall\ \alpha,i,$
\item[\it{(ii)}] One has the kinetic energy equality
\begin{eqnarray}
	H(\overline M_{\alpha,i}^{n+1-},\xi,z_{b,i}) & = & 
        H(\overline{M}_{\alpha,i},\xi,z_{b,i})-\sigma_i\Bigl(\widetilde{H}_{\alpha,i+1/2}
        -  \widetilde{H}_{\alpha,i-1/2}\Bigr) \nonumber\\
& & -\Delta t^n\Bigl(\widehat{H}_{\alpha+1/2,i}^{n+1-}
        -  \widehat{H}_{\alpha-1/2,i}^{n+1-}\Bigr)+ d_{\alpha,i} + e_{\alpha,i},
	\label{eq:entro_st1}
\end{eqnarray}
where $\widetilde{H}_{\alpha,i\pm1/2}$, $\widehat{H}_{\alpha\pm 1/2,i}$ are defined by
\begin{eqnarray*}
\widetilde{H}_{\alpha,i\pm1/2} & = &
\xi H(\overline{M}_{\alpha,i\pm 1/2},\xi,z_{b,i+1/2}) - \xi
H(\overline{M}_{\alpha,i},\xi,z_{b,i}),\\
\widehat{H}_{\alpha\pm 1/2,i}^{n+1-} & = &  \frac{G_{\alpha\pm 1/2,i}}{h_i^{n+1}}H(\overline{M}_{\alpha\pm  1/2,i}^{n+1-},\xi,z_{b,i}),
\end{eqnarray*}
and $d_{\alpha,i}$,~$e_{\alpha,i}$ are given by
\begin{eqnarray}
d_{\alpha,i} & = & \frac{g^2\pi^2}{6}\sigma_i\xi \left( \overline{M}_{\alpha,i+1}
+ 2\overline{M}_{\alpha,i} +\sigma_i\xi(\overline M_{\alpha,i}^{n*} + 2
\overline{M}_{\alpha,i})\right) (\overline{M}_{\alpha,i+1/2} -
\overline{M}_{\alpha,i})^2\\
& & -\frac{g^2\pi^2}{6}\sigma_i\xi \left(\overline{M}_{\alpha,i-1}
+ 2\overline{M}_{\alpha,i} + \sigma_i\xi(\overline M_{\alpha,i}^{n*} + 2
\overline{M}_{\alpha,i})\right) (\overline{M}_{\alpha,i-1/2} -
\overline{M}_{\alpha,i})^2\\
& &  - \frac{g^2\pi^2}{6}\Delta t^n\frac{G_{\alpha+1/2,i}}{h_i ^{n+1}}
(\overline{M}_{\alpha+1/2,i}^{n+1-}+2\overline{M}_{\alpha,i}^{n+1-})(\overline{M}_{\alpha+1/2,i}^{n+1-}-\overline{M}_{\alpha,i}^{n+1-})^2\\
& & +\frac{g^2\pi^2}{6}\Delta t^n\frac{G_{\alpha-1/2,i}}{h_i ^{n+1}}
(\overline{M}_{\alpha-1/2,i}^{n+1-}+2\overline{M}_{\alpha,i}^{n+1-})(\overline{M}_{\alpha-1/2,i}^{n+1-}-\overline{M}_{\alpha,i}^{n+1-})^2\nonumber\\
& & -\frac{g^2\pi^2}{6}(\overline{M}_{\alpha,i}^{n*}
+ 2\overline{M}_{\alpha,i}^{n+1-}) (\overline{M}_{\alpha,i}^{n+1-} -
\overline{M}_{\alpha,i}^{n*})^2,\label{eq:d_i}\\
e_{\alpha,i} & = & \frac{g^2\pi^2}{3} \Delta
t^n\frac{(G_{\alpha+1/2,i}-G_{\alpha-1/2,i})}{h_i^{n+1}}(\overline{M}^{n+1-}_{\alpha,i})^3.
\label{eq:e_i}
\end{eqnarray}
\end{itemize}
\label{thm:entropy_st}
\end{theorem}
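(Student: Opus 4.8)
The plan is to treat the two assertions separately, handling the explicit substep \eqref{eq:kinsc1} and the implicit substep \eqref{eq:kinsc2} in turn, and to exploit throughout the fact that the kinetic energy $H$ defined in \eqref{eq:kinH} is a \emph{cubic} polynomial in its first argument, so that every Taylor expansion below is an exact equality.

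For (i), I would first rewrite the explicit step in upwind form using \eqref{eq:Mupwind1}--\eqref{eq:Mupwind2}: on the support $\{|\xi-u_{\alpha,i}|\le\sqrt{2gh_i}\}$ of $M_{\alpha,i}$ one has
$$M_{\alpha,i}^{n*}=(1-\sigma_i|\xi|)\,M_{\alpha,i}+\sigma_i|\xi|\,M_{\alpha,i}^{\mathrm{up}},$$
where $M_{\alpha,i}^{\mathrm{up}}$ is the upwind neighbour; this is a convex combination as soon as $\sigma_i|\xi|\le1$. Since $|\xi|\le|u_{\alpha,i}|+\sqrt{2gh_i}$ on that support, the CFL condition \eqref{eq:CFLfull} yields $\sigma_i|\xi|\le\frac12<1$, whence $M_{\alpha,i}^{n*}\ge0$. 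The implicit step then preserves nonnegativity by Lemma~\ref{lem:inverse}~(ii): we have $M_i^{n+1-}={\bf A}_{N,i}^{-1}M_i^{n*}$ with ${\bf A}_{N,i}^{-1}$ having nonnegative entries (and $M_i^{n+1-}=0$ is imposed when $h_i^{n+1}=0$), so $M_{\alpha,i}^{n+1-}\ge0$. This proves (i).

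For (ii) the engine is the exact identity already used as \eqref{eq:N2}, valid for arbitrary $f_1,f_2$ precisely because $H$ is cubic,
$$H(f_2,\xi,z_b)=H(f_1,\xi,z_b)+\partial_1 H(f_1,\xi,z_b)(f_2-f_1)+\frac{g^2\pi^2}{6}(f_2+2f_1)(f_2-f_1)^2.$$
Applied to the explicit step with $f_1=\overline M_{\alpha,i}$, $f_2=\overline M_{\alpha,i}^{n*}$, it produces $H(\overline M_{\alpha,i})$, a linear term $\partial_1 H(\overline M_{\alpha,i})(\overline M_{\alpha,i}^{n*}-\overline M_{\alpha,i})$, and a quadratic defect. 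In the linear term I substitute $\overline M_{\alpha,i}^{n*}-\overline M_{\alpha,i}=-\sigma_i\xi(\overline M_{\alpha,i+1/2}-\overline M_{\alpha,i-1/2})$ and apply the same identity once more to expand $H(\overline M_{\alpha,i\pm1/2})$ about $\overline M_{\alpha,i}$; this rewrites $\partial_1 H(\overline M_{\alpha,i})(\overline M_{\alpha,i+1/2}-\overline M_{\alpha,i-1/2})$ as the conservative difference $\widetilde H_{\alpha,i+1/2}-\widetilde H_{\alpha,i-1/2}$ (the topography contributions drop out since the bottom is flat) plus further quadratic defects. Collecting the two defects and using the upwind structure \eqref{eq:Mupwind1}--\eqref{eq:Mupwind2}, so that exactly one of $\overline M_{\alpha,i+1/2}-\overline M_{\alpha,i}$ and $\overline M_{\alpha,i-1/2}-\overline M_{\alpha,i}$ vanishes for each sign of $\xi$, yields the first two lines of $d_{\alpha,i}$. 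The implicit step is then handled by reproducing, at time level $n+1-$, the continuous computation of Proposition~\ref{prop:entropy_kin}: the cubic identity with $f_2=\overline M_{\alpha,i}^{n*}$, $f_1=\overline M_{\alpha,i}^{n+1-}$ gives $H(\overline M_{\alpha,i}^{n+1-})=H(\overline M_{\alpha,i}^{n*})+\partial_1 H(\overline M_{\alpha,i}^{n+1-})(\overline M_{\alpha,i}^{n+1-}-\overline M_{\alpha,i}^{n*})$ minus the defect that is exactly the last line of $d_{\alpha,i}$. Inserting $\overline M_{\alpha,i}^{n+1-}-\overline M_{\alpha,i}^{n*}=\frac{\Delta t^n}{l_\alpha}(N_{\alpha+1/2,i}^{n+1-}-N_{\alpha-1/2,i}^{n+1-})$ into the linear term and performing the splitting of \eqref{eq:N1}--\eqref{eq:N3} (now with all Maxwellians at level $n+1-$ and with $G_{\alpha\pm1/2,i}/h_i^{n+1}$ in place of $G_{\alpha\pm1/2}/h$) produces the vertical flux differences $\widehat H_{\alpha\pm1/2,i}^{n+1-}$, the third and fourth lines of $d_{\alpha,i}$, and the term $e_{\alpha,i}$. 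Adding the two substep identities gives \eqref{eq:entro_st1}.

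I expect the main obstacle to be bookkeeping rather than any conceptual difficulty: because $H$ is cubic there is no inequality slack to control, but the quadratic defects from the two expansions in the explicit step must be combined with the $\sigma_i\xi$ factors coming from $\overline M_{\alpha,i}^{n*}-\overline M_{\alpha,i}$ and reorganized, sign of $\xi$ by sign of $\xi$, into the precise cross terms $\sigma_i\xi(\overline M_{\alpha,i}^{n*}+2\overline M_{\alpha,i})$ of $d_{\alpha,i}$; likewise one must keep track of $h_i^{n+1}=h_i^{n*}$ and of the sign-dependent upwinding of $\overline M_{\alpha\pm1/2,i}^{n+1-}$ in \eqref{eq:defN_Mbis}. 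A useful final check is to integrate \eqref{eq:entro_st1} in $\xi$ and sum over $\alpha$: using \eqref{eq:kinmomE} one recovers the macroscopic fluxes, and the vertical exchange contributions should telescope, which is the route toward a global inequality analogous to \eqref{eq:energ_kin}.
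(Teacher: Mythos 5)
Your proposal is correct and follows essentially the same route as the paper's proof: positivity via the CFL-controlled convex/upwind combination for the explicit substep combined with Lemma~\ref{lem:inverse}~(ii) for the implicit substep, and the energy identity via exact cubic Taylor expansions of $H$ multiplied against $\partial_1 H(\overline{M}_{\alpha,i},\xi,z_{b,i})$ for the explicit step and $\partial_1 H(\overline{M}_{\alpha,i}^{n+1-},\xi,z_{b,i})$ for the implicit step, reusing the splitting of Proposition~\ref{prop:entropy_kin} for the vertical exchange terms and adding the two substep relations. The only differences are presentational (the paper routes the explicit-step algebra through the identities $\partial_1 H(f)f=H(f)+\tfrac{\pi^2g^2}{3}f^3$ and its variant, and divides by $l_\alpha$ at the end), so nothing of substance separates the two arguments.
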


\begin{corollary}
Under the CFL condition~\eqref{eq:CFLfull}, one has the macroscopic energy inequality
\begin{equation*}
	\dsp \sum_{\alpha=1}^N \overline E^{n+1}_{\alpha,i} \leq \sum_{\alpha=1}^N \overline E_{\alpha,i} 
	-\sigma_i\Bigl( \sum_{\alpha=1}^N \int_\R \widetilde H_{\alpha,i+1/2}
                  d\xi - \sum_{\alpha=1}^N\int_\R \widetilde H_{\alpha,i-1/2} d\xi\Bigr),
\label{eq:estentrint_st}
\end{equation*}
with following~\eqref{eq:kinmomE}
$$E_{\alpha,i} = l_\alpha \overline E_{\alpha,i} = l_\alpha \int_\R
H(\overline M_{\alpha,i},\xi,z_{b,i}) d\xi.$$
\label{corol:macro_st}
\end{corollary}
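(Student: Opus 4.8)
The plan is to descend from the kinetic energy equality \eqref{eq:entro_st1} to the macroscopic statement by integrating in $\xi$ and summing over the layers $\alpha$ at the fixed cell $i$. On the left-hand side the moment identities \eqref{eq:kinmomE} are \emph{not} directly available, because $\overline M_{\alpha,i}^{n+1-}$ is the output of the implicit step and is no longer a Maxwellian. Instead I would invoke the energy-minimisation principle of Lemma~\ref{lemma:energy}(ii) with $f=\overline M_{\alpha,i}^{n+1-}$, whose first two moments are $h_{\alpha,i}^{n+1}$ and $q_{\alpha,i}^{n+1}$, to obtain $\overline E_{\alpha,i}^{n+1}\le\int_\R H(\overline M_{\alpha,i}^{n+1-},\xi,z_{b,i})\,d\xi$, and then sum over $\alpha$; this single inequality is what turns the equality into the stated upper bound on $\sum_\alpha\overline E_{\alpha,i}^{n+1}$. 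On the first right-hand term the genuine Maxwellian $\overline M_{\alpha,i}$ \emph{does} satisfy \eqref{eq:kinmomE}, so $\int_\R H(\overline M_{\alpha,i},\xi,z_{b,i})\,d\xi=\overline E_{\alpha,i}$ and one recovers $\sum_\alpha\overline E_{\alpha,i}$; the transport terms are left as the horizontal flux $\int_\R\widetilde H_{\alpha,i\pm1/2}\,d\xi$ appearing in the corollary.

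Next I would dispose of the vertical flux terms. Since $\widehat H_{\alpha+1/2,i}^{n+1-}=\frac{G_{\alpha+1/2,i}}{h_i^{n+1}}H(\overline M_{\alpha+1/2,i}^{n+1-},\xi,z_{b,i})$ is attached to the single interface $\alpha+1/2$ and is seen with opposite signs by the two adjacent layers, the sum $\sum_\alpha(\widehat H_{\alpha+1/2,i}^{n+1-}-\widehat H_{\alpha-1/2,i}^{n+1-})$ telescopes, and the two boundary contributions vanish because $G_{1/2,i}=G_{N+1/2,i}=0$ (see \eqref{eq:Qlim}). The corollary is then reduced to the single claim $\sum_\alpha\int_\R(d_{\alpha,i}+e_{\alpha,i})\,d\xi\le0$.

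I would split $d_{\alpha,i}+e_{\alpha,i}$ into three groups. The two horizontal lines of \eqref{eq:d_i} involve only layer $\alpha$ at the cells $i-1,i,i+1$ and are structurally identical to the single-layer shallow-water scheme; for each fixed $\alpha$ their $\xi$-integral is nonpositive under the CFL condition \eqref{eq:CFLfull}, exactly as in \cite{JSM_entro}, the CFL being used to bound $\sigma_i|\xi|\le\tfrac12$ on the support $|\xi-u_{\alpha,i}|\le\sqrt{2gh_i}$ of the Maxwellians. The relaxation line of \eqref{eq:d_i}, namely $-\frac{g^2\pi^2}{6}(\overline M_{\alpha,i}^{n*}+2\overline M_{\alpha,i}^{n+1-})(\overline M_{\alpha,i}^{n+1-}-\overline M_{\alpha,i}^{n*})^2$, is nonpositive pointwise in $\xi$ because all Maxwellians are nonnegative. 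The remaining group gathers the two vertical-exchange lines of \eqref{eq:d_i} with the cubic source $e_{\alpha,i}$ of \eqref{eq:e_i}; here I would expand the exchange lines with the identity $(b+2a)(b-a)^2=b^3-3a^2b+2a^3$, observe that the conservative cubic pieces $G_{\alpha\pm1/2,i}(\overline M_{\alpha\pm1/2,i}^{n+1-})^3$ telescope across layers and cancel at the boundaries (again $G_{1/2,i}=G_{N+1/2,i}=0$), and use the upwind choice \eqref{eq:defN_Mbis} of $\overline M_{\alpha+1/2,i}^{n+1-}$ together with a discrete summation by parts to extract the sign of what survives.

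The main obstacle is precisely this last group. In the smooth setting the cubic source disappears by itself, because the cube-moment $\int_\R\overline M_\alpha^3\,d\xi$ of a Maxwellian depends only on $h$ and is hence layer-independent, so that $\frac{g^2\pi^2}{3h}\int_\R\overline M_\alpha^3\,d\xi=\frac{gh}{2}$ telescopes trivially (cf. the proof of Proposition~\ref{prop:entropy_kin}). At the discrete level this shortcut is unavailable: since $\overline M_{\alpha,i}^{n+1-}$ is not a Maxwellian, $\int_\R(\overline M_{\alpha,i}^{n+1-})^3\,d\xi$ genuinely depends on $\alpha$ and the naive cancellation fails. The delicate point will therefore be to show that, after the expansion above, the non-telescoping remainder of the vertical-exchange lines combines with $e_{\alpha,i}$ and with the relaxation line so that the upwinding and the sign of $G_{\alpha+1/2,i}$ force the $\xi$-integral of the total to be nonpositive. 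This coupling of the implicit vertical fluxes, the non-Maxwellian cubic moments and the CFL-controlled horizontal diffusion is the genuinely new ingredient compared with the shallow-water analysis of \cite{JSM_entro}.
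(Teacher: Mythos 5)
Your overall scaffolding matches the paper's: integrate \eqref{eq:entro_st1} in $\xi$, sum over $\alpha$, telescope the $\widehat H$ fluxes using $G_{1/2,i}=G_{N+1/2,i}=0$, treat the two horizontal lines of $d_{\alpha,i}$ by the CFL argument, the relaxation line pointwise, and pass from $\sum_\alpha\overline E^{n+1-}_{\alpha,i}$ to $\sum_\alpha\overline E^{n+1}_{\alpha,i}$ with Lemma~\ref{lemma:energy}(ii). One remark on the vertical-exchange lines of $d_{\alpha,i}$: no regrouping is needed for them, since by the upwind choice \eqref{eq:defN_Mbis} each is, pointwise in $\xi$, either zero or of the form $-|G_{\alpha\pm1/2,i}|\times(\mbox{nonnegative quantity})$; this is how the paper disposes of them, keeping them separate from $e_{\alpha,i}$.

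The genuine gap is your last group, i.e.\ the treatment of $e_{\alpha,i}$, which you leave as ``the delicate point will be to show\dots'' --- and the route you sketch provably does not close. Carrying it out: Abel summation gives $\sum_\alpha\int_\R e_{\alpha,i}\,d\xi=-\frac{g^2\pi^2}{3}\Delta t^n\sum_{\alpha=1}^{N-1}\frac{G_{\alpha+1/2,i}}{h_i^{n+1}}\int_\R\bigl((\overline M^{n+1-}_{\alpha+1,i})^3-(\overline M^{n+1-}_{\alpha,i})^3\bigr)d\xi$, and adding the reindexed vertical lines of $d_{\alpha,i}$, say when $G_{\alpha+1/2,i}>0$ so that upwinding sets $\overline M^{n+1-}_{\alpha+1/2,i}=\overline M^{n+1-}_{\alpha+1,i}$, the cubic identity collapses the total contribution of the interface $\alpha+1/2$ to
$$-\frac{g^2\pi^2}{2}\,\Delta t^n\,\frac{G_{\alpha+1/2,i}}{h_i^{n+1}}\;\overline M^{n+1-}_{\alpha+1,i}\Bigl((\overline M^{n+1-}_{\alpha+1,i})^2-(\overline M^{n+1-}_{\alpha,i})^2\Bigr),$$
with the symmetric expression when $G_{\alpha+1/2,i}<0$. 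This has no sign, pointwise or after integration in $\xi$, because after the implicit step the densities $\overline M^{n+1-}_{\alpha,i}$ are arbitrary nonnegative functions; and the relaxation line cannot compensate, being quadratic in the vertical exchange (hence $O((\Delta t^n)^2)$) while the surviving term is $O(\Delta t^n)$. The paper's resolution is a different idea, which is the actual crux of this corollary: rewrite the scheme \eqref{eq:kinsc_st} in the equivalent form with collision terms $Q_{\alpha,i}$, $Q^{n\$}_{\alpha,i}$ satisfying \eqref{eq:collisionbis}, so that the outputs $M^{n\$}_{\alpha,i}$ and $M^{n+1}_{\alpha,i}$ of the two steps \emph{are} Maxwellians with the same moments as $M^{n*}_{\alpha,i}$ and $M^{n+1-}_{\alpha,i}$. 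Since $\partial_1H$ evaluated at a Maxwellian is affine in $\xi$ (see \eqref{eq:d1H}), the collision contributions vanish upon integration in $\xi$, the whole computation of Theorem~\ref{thm:entropy_st} goes through with $^{n*},^{n+1-}$ replaced by $^{n\$},^{n+1}$, and the cubic term now involves $(\overline M^{n+1}_{\alpha,i})^3$, whose $\xi$-integral equals $\frac{3(h_i^{n+1})^2}{2g\pi^2}$ independently of $\alpha$, so that $\sum_\alpha\int_\R e_{\alpha,i}\,d\xi$ telescopes to exactly zero. Without this reformulation (or an equivalent device), your argument cannot conclude.
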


\begin{remark}
Even if we consider the system without topography, we keep the
notations $z_{b,j}$, $j=i,i+1/2,i-1/2,\ldots$ so that the obtained
formula can be easily extended to the case of a non flat topography.
\end{remark}

\begin{proof}[Proof of theorem~\ref{thm:entropy_st}]
\noindent {\it (i)} The scheme~\eqref{eq:kin_st1} also writes
\begin{multline}
- \Delta
t^n\frac{|G_{\alpha+1/2,i}|_+}{h_{\alpha+1,i}^{n+1}}
M_{\alpha+1,i}^{n+1-} + \left( 1 - \Delta t^n\frac{|G_{\alpha+1/2,i}|_- -
    |G_{\alpha-1/2,i}|_+}{h_{\alpha,i}^{n+1}}\right)
M_{\alpha,i}^{n+1-}\\
+ \Delta
t^n\frac{|G_{\alpha-1/2,i}|_-}{h_{\alpha-1,i}^{n+1}}
M_{\alpha-1,i}^{n+1-} = M_{\alpha,i}
-\sigma_i\xi\bigl( M_{\alpha,i+1/2} -
M_{\alpha,i-1/2} \bigr).
\label{eq:ineqT}
\end{multline}
Now 
$$
M_{\alpha,i}
-\sigma_i\xi\bigl( M_{\alpha,i+1/2} -
M_{\alpha,i-1/2} \bigr) \geq \bigl(1-\sigma_i |\xi|\bigr)
M_{\alpha,i},
$$
and since $M_{\alpha,i} \geq 0$ the right hand side of \eqref{eq:ineqT} is positive as soon as
$$
\forall \xi \quad 1-\sigma_i |\xi| \geq 0 
$$
that is true under the CFL condition \eqref{eq:CFLfull}. 
Then Property {\it (ii)} of lemma~\ref{lem:inverse} proves {\it (i)}.

\noindent {\it (ii)}  The
proof of Theorem~3.6 in~\cite{JSM_entro} and the proof of the
inequality~\eqref{eq:entro_st1} shares common points, namely the linear dissipation of the
scheme is, in both cases, based on the convexity of the kinetic
entropy~\eqref{eq:kinH} and the form of the
Maxwellian~\eqref{eq:kinmaxw}. But the proof of
inequality~\eqref{eq:entro_st1} is more complex because of the
momentum exchange terms along the vertical axis and their implicit
treatment. Notice that, compared to Theorem~3.6 in~\cite{JSM_entro},
the derivation of the horizontal linear dissipation is obtained in a
different way.

In order to prove~\eqref{eq:entro_st1} we 
will simply multiply the first equation of~\eqref{eq:kinsc_st} by
$\partial_1 H (\overline{M}_{\alpha,i},\xi,z_{b,i})$ and the second equation of~\eqref{eq:kinsc_st} by
$\partial_1 H (\overline{M}_{\alpha,i}^{n+1-},\xi,z_{b,i})$, perform some computations that will take advantage of the kinetic relations and finally add the two relations.

Before to do that, let us first note that, using the identity
\begin{equation}
\overline{M}_{\alpha,i+1}^3 = \overline{M}_{\alpha,i}^3 + 3\overline{M}_{\alpha,i}^2 (\overline{M}_{\alpha,i+1} - \overline{M}_{\alpha,i})
+ (\overline{M}_{\alpha,i+1} + 2\overline{M}_{\alpha,i}) (\overline{M}_{\alpha,i+1} - \overline{M}_{\alpha,i})^2,
\label{eq:cubic}
\end{equation}
we obtain an expression for the linear dissipation associated to the scheme
\begin{equation*}\begin{array}{l}
	\dsp H_\alpha(\overline{M}_{\alpha,i+1},\xi,z_{b,i+1}) - H_\alpha(\overline{M}_{\alpha,i},\xi,z_{b,i})
	-\partial_1 H_\alpha(\overline{M}_{\alpha,i},\xi,z_{b,i})(\overline{M}_{\alpha,i+1}-\overline{M}_{\alpha,i})\\
	\dsp\mkern 100mu
= \frac{g^2\pi^2}{6}(\overline{M}_{\alpha,i+1} + 2\overline{M}_{\alpha,i}) (\overline{M}_{\alpha,i+1} - \overline{M}_{\alpha,i})^2
	\end{array}
\end{equation*}
that can also be written, using definitions~\eqref{eq:Mupwind1}-\eqref{eq:Mupwind2}, under the form
\begin{equation}\begin{array}{l}
	\dsp H_\alpha(\overline{M}_{\alpha,i+1/2},\xi,z_{b,i+1/2}) = H_\alpha(\overline{M}_{\alpha,i},\xi,z_{b,i})
	+\partial_1 H_\alpha(\overline{M}_{\alpha,i},\xi,z_{b,i})(\overline{M}_{\alpha,i+1/2}-\overline{M}_{\alpha,i})\\
	\dsp\mkern 100mu
+ \frac{g^2\pi^2}{6}(\overline{M}_{\alpha,i+1} + 2\overline{M}_{\alpha,i}) (\overline{M}_{\alpha,i+1/2} - \overline{M}_{\alpha,i})^2.
	\label{eq:convleftbis0}
	\end{array}
\end{equation}
A similar expression is obviously available at the interface $i-1/2$
\begin{equation}\begin{array}{l}
	\dsp H_\alpha(\overline{M}_{\alpha,i-1/2},\xi,z_{b,i-1/2}) = H_\alpha(\overline{M}_{\alpha,i},\xi,z_{b,i})
	+\partial_1 H_\alpha(\overline{M}_{\alpha,i},\xi,z_{b,i})(\overline{M}_{\alpha,i-1/2}-\overline{M}_{\alpha,i})\\
	\dsp\mkern 100mu
+ \frac{g^2\pi^2}{6}(\overline{M}_{\alpha,i-1} + 2\overline{M}_{\alpha,i}) (\overline{M}_{\alpha,i-1/2} - \overline{M}_{\alpha,i})^2.
	\label{eq:convleftbis1}
	\end{array}
\end{equation}
Let us now begin by considering the explicit step defined by the first equations of~\eqref{eq:kinsc_st}. 
For $\xi\leq 0$, it writes
\begin{equation}
M_{\alpha,i}^{n*}= M_{\alpha,i} -\sigma_i\xi\bigl(
M_{\alpha,i+1} - M_{\alpha,i}\bigr),
\label{eq:kin_st_up1}
\end{equation}
whereas for $\xi\geq 0$, we have
\begin{equation}
M_{\alpha,i}^{n*}= M_{\alpha,i} -\sigma_i\xi\bigl(
M_{\alpha,i} - M_{\alpha,i-1}\bigr).
\label{eq:kin_st_up2}
\end{equation}
Now let us multiply Eqs.~\eqref{eq:kin_st_up1},\eqref{eq:kin_st_up2} by
$\partial_1 H_\alpha(\overline{M}_{\alpha,i},\xi,z_{b,i})$. Using  
expressions~\eqref{eq:convleftbis0},\eqref{eq:convleftbis1} for the quantities
\begin{align*}
&
  \partial_1 H_\alpha(\overline{M}_{\alpha,i},\xi,z_{b,i})(M_{\alpha,i+1}-M_{\alpha,i})
  = \frac{1}{l_\alpha} \partial_1 H_\alpha(\overline{M}_{\alpha,i},\xi,z_{b,i})(\overline{M}_{\alpha,i+1}-\overline{M}_{\alpha,i}),\\
&
  \partial_1 H_\alpha(\overline{M}_{\alpha,i},\xi,z_{b,i})(M_{\alpha,i}-M_{\alpha,i-1})
  = \frac{1}{l_\alpha} \partial_1 H_\alpha(\overline{M}_{\alpha,i},\xi,z_{b,i})(\overline{M}_{\alpha,i}-\overline{M}_{\alpha,i-1}),
\end{align*}
we obtain the relation
\begin{eqnarray}
l_\alpha \partial_1 H_\alpha(\overline{M}_{\alpha,i},\xi,z_{b,i}) \bigl(
  M_{\alpha,i}^{n*} -  M_{\alpha,i}\bigr) & = &
        -\sigma_i\Bigl(\widetilde{H}_{\alpha,i+1/2}
        - \widetilde{H}_{\alpha,i-1/2}\Bigr)\nonumber\\
& & +\frac{g^2\pi^2}{6}\sigma_i\xi(\overline{M}_{\alpha,i+1}
+ 2\overline{M}_{\alpha,i}) (\overline{M}_{\alpha,i+1/2} -
\overline{M}_{\alpha,i})^2\nonumber\\
& & -\frac{g^2\pi^2}{6}\sigma_i\xi(\overline{M}_{\alpha,i-1}
+ 2\overline{M}_{\alpha,i}) (\overline{M}_{\alpha,i-1/2} -
\overline{M}_{\alpha,i})^2.
\label{eq:entro_nn}
\end{eqnarray}
Then the identities
\begin{align*}
& \partial_1 H (f,\xi,z_b) f = H(f,\xi,z_b) + \frac{\pi^2 g^2}{3}f^3,\\
& \partial_1 H (f,\xi,z_b) \tilde f = H(\tilde f,\xi,z_b) + \frac{\pi^2
  g^2}{2}(f^2 - \tilde f^2) \tilde f + \frac{\pi^2
  g^2}{3} \tilde f^3,
\end{align*}
help us to write, with $\overline M_{\alpha,i}^{n*} =
l_\alpha M_{\alpha,i}^{n*}$,
\begin{eqnarray*}
\partial_1 H_\alpha(\overline{M}_{\alpha,i},\xi,z_{b,i}) \bigl(
  M_{\alpha,i}^{n*} -  M_{\alpha,i}\bigr) & = & \frac{1}{l_\alpha} \partial_1 H_\alpha(\overline{M}_{\alpha,i},\xi,z_{b,i}) \bigl(
  \overline M_{\alpha,i}^{n*} - \overline M_{\alpha,i}\bigr)\\
& = &  \frac{1}{l_\alpha} H_\alpha(\overline M_{\alpha,i}^{n*},\xi,z_{b,i}) -
  \frac{1}{l_\alpha} H_\alpha(\overline{M}_{\alpha,i},\xi,z_{b,i}) - L_{\alpha,i},
\end{eqnarray*}
where 
\begin{equation*}
L_{\alpha,i} = \frac{\pi^2 g^2}{6} (\overline M_{\alpha,i}^{n*} + 2
\overline{M}_{\alpha,i}) (\overline M_{\alpha,i}^{n*} -
\overline{M}_{\alpha,i})^2.
\label{eq:l_alpha}
\end{equation*}
From relations~\eqref{eq:kin_st_up1}-\eqref{eq:kin_st_up2} we can write
$$L_{\alpha,i} = \frac{\pi^2 g^2}{6} (\sigma_i|\xi|)^2(\overline M_{\alpha,i}^{n*} + 2
\overline{M}_{\alpha,i}) \bigl( (\overline{M}_{\alpha,i+1} -
\overline{M}_{\alpha,i})^2\1_{\xi\leq 0} + (\overline{M}_{\alpha,i} -
\overline{M}_{\alpha,i-1})^2\1_{\xi\geq 0}\bigr).$$
Therefore we are able to write~\eqref{eq:entro_nn} under the form
\begin{eqnarray}
H_\alpha(\overline M_{\alpha,i}^{n*},\xi,z_{b,i}) & = & H_\alpha(\overline{M}_{\alpha,i},\xi,z_{b,i}) 
        -\sigma_i\Bigl(\widetilde{H}_{\alpha,i+1/2}
        - \widetilde{H}_{\alpha,i-1/2}\Bigr)\nonumber\\
& & +l_\alpha\frac{g^2\pi^2}{6}\sigma_i\xi \bigl(\overline{M}_{\alpha,i+1}
+ 2\overline{M}_{\alpha,i} + \sigma_i\xi(\overline M_{\alpha,i}^{n*} + 2
\overline{M}_{\alpha,i})\bigr) (\overline{M}_{\alpha,i+1/2} -
\overline{M}_{\alpha,i})^2\nonumber\\
& & -l_\alpha\frac{g^2\pi^2}{6}\sigma_i\xi \bigl(\overline{M}_{\alpha,i-1}
+ 2\overline{M}_{\alpha,i} + \sigma_i\xi(\overline M_{\alpha,i}^{n*} + 2
\overline{M}_{\alpha,i})\bigr) (\overline{M}_{\alpha,i-1/2} -
\overline{M}_{\alpha,i})^2.
\label{eq:entrfullystat1_JSM}
\end{eqnarray}
We now consider the implicit part of the scheme. We then multiply the second equation of~\eqref{eq:kinsc_st} by $\partial_1 H_\alpha(\overline{M}_{\alpha,i}^{n+1-},\xi,z_{b,i})$. 
Proof of Proposition~\ref{prop:entropy_kin} allows us to write 
\begin{equation*}\begin{array}{l}
\partial_1 H_\alpha(\overline{M}_{\alpha,i}^{n+1-},\xi,z_{b,i})
(N_{\alpha+1/2,i}^{n+1-} - N_{\alpha-1/2,i}^{n+1-}) =\\
\dsp \mkern 100mu                   \frac{G_{\alpha+
                   1/2,i}}{h^{n+1}}H_\alpha(\overline{M}_{\alpha+
                   1/2,i}^{n+1-},\xi,z_{b,i}) - \frac{G_{\alpha- 1/2,i}}{h^{n+1}}H_\alpha(\overline{M}_{\alpha-  1/2,i}^{n+1-},\xi,z_{b,i})  \\
\dsp \mkern 100mu -l_\alpha\frac{g^2\pi^2}{6}\frac{G_{\alpha+1/2,i}}{h_i ^{n+1}}
(\overline{M}_{\alpha+1/2,i}^{n+1-}+2\overline{M}_{\alpha,i}^{n+1-})(\overline{M}_{\alpha+1/2,i}^{n+1-}-\overline{M}_{\alpha,i}^{n+1-})^2\\
\dsp \mkern 100mu +l_\alpha\frac{g^2\pi^2}{6}\frac{G_{\alpha-1/2,i}}{h_i ^{n+1}}
(\overline{M}_{\alpha-1/2,i}^{n+1-}+2\overline{M}_{\alpha,i}^{n+1-})(\overline{M}_{\alpha-1/2,i}^{n+1-}-\overline{M}_{\alpha,i}^{n+1-})^2\\
\dsp \mkern 100mu + l_\alpha\frac{g^2\pi^2}{3} \frac{(G_{\alpha+1/2,i}-G_{\alpha-1/2,i})}{h_i ^{n+1}}(\overline{M}^{n+1-}_{\alpha,i})^3.
\end{array}\end{equation*}
But we also have
\begin{multline*}
\partial_1 H_\alpha(\overline{M}_{\alpha,i}^{n+1-},\xi,z_{b,i}) \bigl(\overline{M}_{\alpha,i}^{n+1-}
- \overline{M}_{\alpha,i}^{n*} \bigr) =
H_\alpha(\overline{M}_{\alpha,i}^{n+1-},\xi,z_{b,i}) -
H_\alpha(\overline{M}_{\alpha,i}^{n*},\xi,z_{b,i})\\
-l_\alpha\frac{g^2\pi^2}{6}(\overline{M}_{\alpha,i}^{n*}
+ 2\overline{M}_{\alpha,i}^{n+1-}) (\overline{M}_{\alpha,i}^{n+1-} -
\overline{M}_{\alpha,i}^{n*})^2.
\end{multline*}
Using the two previous expressions, we are able to rewrite the second equation of~\eqref{eq:kinsc_st} multiplied by $\partial_1 H_\alpha(\overline{M}_{\alpha,i}^{n+1-},\xi,z_{b,i})$ under the form
\begin{eqnarray}
	H_\alpha(\overline M_{\alpha,i}^{n+1-},\xi,z_{b,i}) & = &
        H_\alpha(\overline{M}_{\alpha,i}^{n*},\xi,z_{b,i})-\Delta t\Bigl(\widehat{H}_{\alpha+1/2,i}^{n+1-}
        - \widehat{H}_{\alpha-1/2,i}^{n+1-}\Bigr)\nonumber\\
& &  - l_\alpha\frac{g^2\pi^2}{6}\Delta t^n\frac{G_{\alpha+1/2,i}}{h_i ^{n+1}}
(\overline{M}_{\alpha+1/2,i}^{n+1-}+2\overline{M}_{\alpha,i}^{n+1-})(\overline{M}_{\alpha+1/2,i}^{n+1-}-\overline{M}_{\alpha,i}^{n+1-})^2\nonumber\\
& & +l_\alpha\frac{g^2\pi^2}{6}\Delta t^n\frac{G_{\alpha-1/2,i}}{h_i ^{n+1}}
(\overline{M}_{\alpha-1/2,i}^{n+1-}+2\overline{M}_{\alpha,i}^{n+1-})(\overline{M}_{\alpha-1/2,i}^{n+1-}-\overline{M}_{\alpha,i}^{n+1-})^2\nonumber\\
& & + l_\alpha\frac{g^2\pi^2}{3} \Delta
t^n\frac{(G_{\alpha+1/2,i}-G_{\alpha-1/2,i})}{h_i
    ^{n+1}}(\overline{M}_{\alpha,i}^{n+1-})^3\nonumber\\
& & -l_\alpha\frac{g^2\pi^2}{6}(\overline{M}_{\alpha,i}^{n*}
+ 2\overline{M}_{\alpha,i}^{n+1-}) (\overline{M}_{\alpha,i}^{n+1-} -
\overline{M}_{\alpha,i}^{n*})^2.
	\label{eq:entrfullystat2_JSM}
\end{eqnarray}
The sum of relations~\eqref{eq:entrfullystat1_JSM},
\eqref{eq:entrfullystat2_JSM} divided by $l_\alpha$
gives the result.
\end{proof}


\begin{proof}[Proof of corollary~\ref{corol:macro_st}]
An integration in $\xi$ of
relation~\eqref{eq:entro_st1} and a sum of the obtained relation for
$\alpha=1,\ldots,N$ gives
\begin{multline*}
	\dsp \sum_{\alpha=1}^N \overline E^{n+1-}_{\alpha,i} = \sum_{\alpha=1}^N \overline E_{\alpha,i} 
	-\sigma_i\Bigl( \sum_{\alpha=1}^N \int_\R \widetilde H_{\alpha,i+1/2}
                  d\xi - \sum_{\alpha=1}^N\int_\R \widetilde H_{\alpha,i-1/2} d\xi\Bigr)\\
+ \sum_{\alpha=1}^N \int_\R  d_{\alpha,i} d\xi + \sum_{\alpha=1}^N \int_\R e_{\alpha,i}\ d\xi,
\end{multline*}
where $d_{\alpha,i}$ and $e_{\alpha,i}$ are defined
by~\eqref{eq:d_i},~\eqref{eq:e_i}.

Using~\eqref{eq:kin_st_up1}, the first line of $d_{\alpha,i}$ writes
$$\frac{g^2\pi^2}{6}\sigma_i\xi \left( (1-(\sigma_i\xi)^2)\overline{M}_{\alpha,i+1}
+ \bigl(2 + 3 \sigma_i\xi - (\sigma_i\xi)^2\big) \overline{M}_{\alpha,i} \right) (\overline{M}_{\alpha,i+1/2} -
\overline{M}_{\alpha,i})^2,$$
that is non positive under the CFL condition~\eqref{eq:CFLfull}. Likewise, we obtain
that the second line of $d_{\alpha,i}$ is non positive. Moreover, it is
obvious from the definition of $G_{\alpha+1/2,i}$ given
by~\eqref{eq:upwind_uT} that the other lines of $d_{\alpha,i}$ are also non positive.

It remains to study the quantity
\begin{equation}
\sum_{\alpha=1}^N \int_\R e_{\alpha,i}\ d\xi = \sum_{\alpha=1}^N \int_\R \frac{g^2\pi^2}{3} \Delta
t^n\frac{(G_{\alpha+1/2,i}-G_{\alpha-1/2,i})}{h_i
    ^{n+1}}(\overline{M}_{\alpha,i}^{n+1-})^3 d\xi.
\label{eq:sum_e}
\end{equation}
Since $\overline{M}_{\alpha,i}^{n+1-}$ is not a Maxwellian, it is not
possible to conclude, as in the proof of Proposition~\ref{prop:entropy_kin}, that
$$\int_\R (\overline{M}_{\alpha,i}^{n+1-})^3 d\xi = \frac{g}{2}h_{\alpha,i}^2,$$
and that the sum~\eqref{eq:sum_e} is zero. So we proceed as follows.

Let us rewrite Eq.~\eqref{eq:kinsc_st} under the
equivalent form
\begin{equation*}
\left\{\begin{array}{l}
M_{\alpha,i}^{n\$}=M_{\alpha,i}-\sigma_i\Bigl( \xi M_{\alpha,i+1/2}
         - \xi M_{\alpha,i-1/2} 
	\Bigr) + \Delta t^n Q_{\alpha,i}\\
M_{\alpha,i}^{n+1} = M_{\alpha,i}^{n\$}    +    \Delta t^n (
         N_{\alpha+1/2,i}^{n+1} - N_{\alpha-1/2,i}^{n+1}) + \Delta t^n Q_{\alpha,i}^{n\$}
\end{array}\right.
\end{equation*}
where $Q_{\alpha,i}$, $Q_{\alpha,i}^{n\$}$ are two collision terms
satisfying the integral relations \eqref{eq:collisionbis}.
Arguments of the proof of 
Theorem~\ref{thm:entropy_st}-{\it (ii)} remain unchanged 
where the superscript $^{n*}$
(resp. $^{n+1-}$) becomes $^{n\$}$ (resp. $^{n+1}$) and the obvious equalities
$$\int_\R \partial_1 H_\alpha(\overline{M}_{\alpha,i},\xi,z_{b,i})
Q_{\alpha,i} d\xi =\int_\R \partial_1 H_\alpha(\overline{M}_{\alpha,i}^{n+1},\xi,z_{b,i})
Q_{\alpha,i}^{n\$} d\xi = 0.$$
Since the quantity $\overline{M}_{\alpha,i}^{n+1}$ is now a Maxwellian, we can conclude that
\begin{multline*}
\sum_{\alpha=1}^N \int_\R e_{\alpha,i}\ d\xi = \sum_{\alpha=1}^N \int_\R \frac{g^2\pi^2}{3} \Delta
t^n\frac{(G_{\alpha+1/2,i}-G_{\alpha-1/2,i})}{h_i
    ^{n+1}}(\overline{M}_{\alpha,i}^{n+1-})^3 d\xi \\
= \sum_{\alpha=1}^N \int_\R \frac{g^2\pi^2}{3} \Delta
t^n\frac{(G_{\alpha+1/2,i}-G_{\alpha-1/2,i})}{h_i
    ^{n+1}}(\overline{M}_{\alpha,i}^{n+1})^3 d\xi = 0.
\end{multline*}
Let us note that, since $\overline{M}_{\alpha,i}^{n+1}$ is by definition the Maxwellian with the same moments that the density function $\overline{M}_{\alpha,i}^{n+1-}$, Lemma \ref{lemma:energy} gives us
$$
\dsp \sum_{\alpha=1}^N \overline E^{n+1}_{\alpha,i} \leq \dsp \sum_{\alpha=1}^N \overline E^{n+1-}_{\alpha,i}
$$
that concludes the proof.
\end{proof}

\subsection{With topography}
\label{sec:withtopo}
In this paragraph we examine the properties of the discrete
scheme~\eqref{eq:kinsc} when the topography source term is no more neglected.

The hydrostatic reconstruction scheme (HR scheme for short) is a general method giving, with any solver, a
robust and efficient discretization of the source terms in
conservation laws. It has been initially proposed for the Saint-Venant
system leading to a consistent,
well-balanced, positive scheme satisfying a semi-discrete entropy
inequality~\cite{bristeau1}. Here we use the HR technique to discretize
the topography source term appearing in~\eqref{eq:eq44} and we prove
the kinetic scheme~\eqref{eq:kinsc} coupled with the HR technique leads to a consistent,
well-balanced, positive scheme satisfying a fully discrete entropy
inequality with a controlled error term. It generalizes to the layerwise framework the result obtained in \cite{JSM_entro} for the classical shallow water model that was used in~\cite{lhebrard} to demonstrate the convergence of the scheme.

With first briefly recall the main features of the HR technique. The HR scheme uses reconstructed states
\begin{equation}
	U_{\alpha,i+1/2-}=(h_{\alpha,i+1/2-},h_{\alpha,i+1/2-}u_i),\qquad
	U_{\alpha,i+1/2+}=(h_{\alpha,i+1/2+},h_{\alpha,i+1/2+}u_{i+1}),
	\label{eq:lrstates}
\end{equation}
defined by
\begin{equation}
\begin{array}{l}
	h_{i+1/2-}=(h_i+z_i-z_{b,i+1/2})_+,\\
h_{i+1/2+}=(h_{i+1}+z_{b,i+1}-z_{b,i+1/2})_+,\\
        h_{\alpha,i+1/2\pm} = l_\alpha h_{i+1/2\pm},\\
        M_{\alpha,i+1/2\pm}=M(U_{\alpha,i+1/2\pm},\xi)
\end{array}
	\label{eq:hlr}
\end{equation}
and
\begin{equation}
	z_{b,i+1/2}=\max(z_{b,i},z_{b,i+1}).
	\label{eq:zstar}
\end{equation}
We note that the definitions of $h_{i+1/2\pm}$ in \eqref{eq:hlr}-\eqref{eq:zstar} ensure
that $h_{i+1/2-}\leq h_i$, and $h_{i+1/2+}\leq h_{i+1}$. Now we can transfer these results to the kinetic level. First, 
because of \eqref{eq:kinmaxw}, one has
\begin{equation}
	0\leq M_{\alpha,i+1/2-}\leq M_{\alpha,i},\quad 0\leq M_{\alpha,i+1/2+}\leq M_{\alpha,i+1},
	\label{eq:domM_i+1/2}
\end{equation}
and thus
\begin{equation*}
	M(U_{\alpha,i},\xi)=0\ \Rightarrow M(U_{\alpha,i+1/2-},\xi)=0\mbox{ and }M(U_{\alpha,i-1/2+},\xi)=0.
	\label{eq:supporti+1/2}
\end{equation*}
Let us now consider the kinetic source terms $\delta M_{\alpha,i+1/2\pm}$. They depend on $\xi$, $U_{\alpha,i}$, $U_{\alpha,i+1}$, $\Delta z_{i+1/2}=z_{i+1}-z_i$,
and satisfy the moment relations
\begin{equation}
	\int_\R\delta M_{\alpha,i+1/2-}\,d\xi=0,\quad
	\int_\R\xi\,\delta M_{\alpha,i+1/2-}\,d\xi=g\frac{h_{\alpha,i}^2}{2}-g\frac{h_{\alpha,i+1/2-}^2}{2},
	\label{eq:intdeltaM-}
\end{equation}
\begin{equation}
	\int_\R\delta M_{\alpha,i-1/2+}\,d\xi=0,\quad
	\int_\R\xi\,\delta M_{\alpha,i-1/2+}\,d\xi=g\frac{h_{\alpha,i}^2}{2}-g\frac{h_{\alpha,i-1/2+}^2}{2}.
	\label{eq:intdeltaM+}
\end{equation}
We also assume that,
\begin{equation}
	M(U_{\alpha,i},\xi)=0\ \Rightarrow \delta M_{\alpha,i+1/2-}(\xi)=0\mbox{ and }\delta M_{\alpha,i-1/2+}(\xi)=0.
	\label{eq:support_deltai+1/2}
\end{equation}
For reasons that will appear later during the derivation of the
entropy inequality, we make the choice
\begin{equation}\begin{array}{l}
	\dsp \delta M_{\alpha,i+1/2-}=(\xi-u_{\alpha,i})(M_{\alpha,i}-M_{\alpha,i+1/2-}),\\
	\dsp \delta M_{\alpha,i-1/2+}=(\xi-u_{\alpha,i})(M_{\alpha,i}-M_{\alpha,i-1/2+}),
	\label{eq:defdeltaM}
	\end{array}
\end{equation}
that satisfies the assumptions \eqref{eq:intdeltaM-}, \eqref{eq:intdeltaM+}
and \eqref{eq:support_deltai+1/2}. This allows to precise
the numerical fluxes in~\eqref{eq:upU0} having the form
\begin{equation}
F_{i+1/2-}= (\sum_{\alpha=1}^N
F_{h_\alpha,i+1/2-},F_{q_1,i+1/2-},\ldots,F_{q_N,i+1/2-})^T,
\label{eq:HRflux12}
\end{equation}
with
\begin{eqnarray}
	\dsp F_{h_\alpha,i+1/2-} & = &\int_{\R} \xi M_{\alpha,i+1/2}
                                       d\xi = \int_{\xi> 0} \xi
                                       M_{\alpha,i+1/2-} d\xi +
                                       \int_{\xi< 0} \xi
                                       M_{\alpha,i+1/2+} d\xi,
                                       \nonumber\\ 
	\dsp F_{q_\alpha,i+1/2-} & = & \int_{\R} (\xi^2 M_{\alpha,i+1/2} +
  \xi \delta M_{\alpha,i+1/2-}) d\xi + 
  \nonumber\\
& = & \int_{\xi> 0} \xi^2
  M_{\alpha,i+1/2-} d\xi + \int_{\xi< 0} \xi^2 M_{\alpha,i+1/2+} d\xi
+ \frac{gh_{\alpha,i}^2}{2}-\frac{gh_{\alpha,i+1/2-}^2}{2}.
	\label{eq:HRflux2}
\end{eqnarray}
The source term $S_i$ remains unchanged, see \eqref{eq:Si}, since the topography source term is taken into account in the (now non conservative) fluxes \eqref{eq:HRflux2}.

Now we prove some properties of the scheme~\eqref{eq:kinsc}  with the choice~\eqref{eq:defdeltaM}.
Notice that only the explicit part of the scheme~\eqref{eq:kinsc}, i.e relation \eqref{eq:kinsc1}, has been affected by the topography. The implicit part is unchanged and still requires to
invert the matrix ${\bf I}_N + \Delta t {\bf G}_{N,i}$ whose
properties have already been studied in lemma~\ref{lem:inverse}. In particular the result of Remark~\ref{rem:G_bound_dis} concerning the boundedness of the
quantities ${G_{\alpha\pm 1/2,i}}/{h_i^{n+1}}$
remains valid.



\begin{theorem}
Under the CFL condition
\begin{equation}
\Delta t^n < \min_{1 \leq \alpha \leq N}\min_{i \in I}
\frac{\Delta x_i}{|u_{\alpha,i}| + 2\sqrt{2 g h_i}
 }, 
	\label{eq:CFLfull_topo}
\end{equation}
the scheme \eqref{eq:kinsc} with the choice~\eqref{eq:defdeltaM} verifies the following properties.

\noindent (i) The kinetic functions remain nonnegative $M^{n+1-}_{\alpha,i}\geq 0$.

\noindent (ii) The scheme \eqref{eq:kinsc} is kinetic well-balanced.

\noindent (iii) One has the kinetic relation
\begin{multline}
	H(\overline M_{\alpha,i}^{n+1-},\xi,z_{b,i}) = H(\overline M_{\alpha,i},\xi,z_{b,i})-\sigma_i\Bigl(\widetilde{H}_{\alpha,i+1/2-} -
        \widetilde{H}_{\alpha,i-1/2+}\Bigr) \\
-\Delta t^n\Bigl(\widehat{H}_{\alpha+1/2,i}^{n+1-} -
        \widehat{H}_{\alpha-1/2,i}^{n+1-}\Bigr)+ d_{\alpha,i} + e_{\alpha,i},
	\label{eq:entrfullystat3_JSM}
\end{multline}
where
\begin{equation}\begin{array}{l}
	\dsp\widetilde H_{i+1/2-}=\xi\1_{\xi<0}H(M_{i+1/2+},z_{i+1/2})+\xi\1_{\xi>0}H(M_{i+1/2-},z_{i+1/2})\\
	\dsp\hphantom{\widetilde H_{i+1/2-}=}+\xi H(M_i,z_i)-\xi H(M_{i+1/2-},z_{i+1/2})\vphantom{\Bigl|}\\
	\dsp\hphantom{\widetilde H_{i+1/2-}=}+\Bigl(\eta'(U_i)\kxi+gz_i\Bigr)
	\bigl(\xi M_{i+1/2-}-\xi M_i+\delta M_{i+1/2-}\bigr),
	\end{array}
	\label{eq:tildH-}
\end{equation}
\begin{equation}\begin{array}{l}
	\dsp\widetilde H_{i-1/2+}=\xi\1_{\xi<0}H(M_{i-1/2+},z_{i-1/2})+\xi\1_{\xi>0}H(M_{i-1/2-},z_{i-1/2})\\
	\dsp\hphantom{\widetilde H_{i+1/2-}=}+\xi H(M_i,z_i)-\xi H(M_{i-1/2+},z_{i-1/2})\vphantom{\Bigl|}\\
	\dsp\hphantom{\widetilde H_{i+1/2-}=}+\Bigl(\eta'(U_i)\kxi+gz_i\Bigr)
	\bigl(\xi M_{i-1/2+}-\xi M_i+\delta M_{i-1/2+}\bigr).
	\end{array}
	\label{eq:tildH+}
\end{equation}
and $\widehat{H}_{\alpha+1/2,i}^{n+1-}$,
$\widehat{H}_{\alpha-1/2,i}^{n+1-}$ are defined in
Theorem~\ref{thm:entropy_st}. The terms $d_{\alpha,i}$,~$e_{\alpha,i}$ satisfy the estimates
\begin{eqnarray*}
d_{\alpha,i} & \leq &
\sigma_i\xi\frac{g^2\pi^2}{6}\left(\overline{M}_{\alpha,i+1/2+} +
  2\overline{M}_{\alpha,i+1/2-}+\sigma_i\xi (2\overline{M}_{\alpha,i}+\overline M^{n*}_{\alpha,i}) \right)
(\overline{M}_{\alpha,i+1/2} - \overline{M}_{\alpha,i+1/2-})^2\nonumber\\
& & 
-\sigma_i\xi\frac{g^2\pi^2}{6}\left(\overline{M}_{\alpha,i-1/2-} +
  2\overline{M}_{\alpha,i-1/2+}-\sigma_i\xi (2\overline{M}_{\alpha,i}+\overline M^{n*}_{\alpha,i})\right) (\overline{M}_{\alpha,i-1/2} -
\overline{M}_{\alpha,i-1/2+})^2\nonumber\\
& & - \Delta t^n \frac{g^2\pi^2}{6} \frac{G_{\alpha+1/2,i}}{h_i^{n+1}}
(\overline{M}_{\alpha+1/2,i}^{n+1-}+2\overline{M}_{\alpha,i}^{n+1-})(\overline{M}_{\alpha+1/2,i}^{n+1-}-\overline{M}_{\alpha,i}^{n+1-})^2\nonumber\\
& & + \Delta t^n \frac{g^2\pi^2}{6} \frac{G_{\alpha-1/2,i}}{h_i^{n+1}} (\overline{M}_{\alpha-1/2,i}^{n+1-}+2\overline{M}_{\alpha,i}^{n+1})(\overline{M}_{\alpha-1/2,i}^{n+1-}-\overline{M}_{\alpha,i}^{n+1-})^2,\\
e_{\alpha,i} & \leq & \sigma_i^2 \frac{g^2\pi^2}{3} u_{\alpha,i}^2
(2\overline{M}_{\alpha,i}+\overline M^{n*}_{\alpha,i})(\overline{M}_{\alpha,i-1/2+}^{n+1-}
- \overline{M}_{\alpha,i+1/2-}^{n+1-})^2\nonumber\\
& & + \Delta t^n
\frac{g^2\pi^2}{3}\frac{(G_{\alpha+1/2,i}-G_{\alpha-1/2,i})}{h_i ^{n+1}}(\overline{M}_{\alpha,i}^{n+1-})^3.
\end{eqnarray*}
\label{thm:entropy_discrete}
\end{theorem}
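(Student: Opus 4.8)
The plan is to mirror the proof of Theorem~\ref{thm:entropy_st}, isolating precisely what the hydrostatic reconstruction changes. The crucial observation, already recorded after \eqref{eq:HRflux2}, is that only the explicit substep \eqref{eq:kinsc1} is modified by the topography --- through the upwinding on the reconstructed Maxwellians and the source contributions $\delta M_{\alpha,i+1/2-}$, $\delta M_{\alpha,i-1/2+}$ --- while the implicit substep \eqref{eq:kinsc2} is untouched. Consequently every computation of Theorem~\ref{thm:entropy_st} that concerns the implicit exchange terms carries over verbatim: the derivation of the $\widehat H^{n+1-}_{\alpha\pm1/2,i}$ fluxes, of the implicit lines of $d_{\alpha,i}$, and of the $(\overline M^{n+1-}_{\alpha,i})^3$ part of $e_{\alpha,i}$, and Lemma~\ref{lem:inverse} applies unchanged.

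For (i) I would first establish positivity of the explicit update $M^{n*}_{\alpha,i}$, then invoke Lemma~\ref{lem:inverse}\,(ii) to propagate it through the implicit solve. Splitting on the sign of $\xi$ and substituting the explicit form \eqref{eq:defdeltaM} of $\delta M$ into \eqref{eq:kinsc1}, the flux and source terms of cell $i$ combine, and using the monotonicity $0\le M_{\alpha,i+1/2-}\le M_{\alpha,i}$, $0\le M_{\alpha,i-1/2+}\le M_{\alpha,i}$ from \eqref{eq:domM_i+1/2} one rewrites the update as $M_{\alpha,i}(1-\sigma_i\,\text{speed})$ plus nonnegative neighbour contributions. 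On $\supp M_{\alpha,i}$ the transport factor $|\xi|$ is bounded by $|u_{\alpha,i}|+\sqrt{2gh_i}$, while the factor $(\xi-u_{\alpha,i})$ in $\delta M$ adds a further $\sqrt{2gh_i}$; adding the two is exactly what forces the $2\sqrt{2gh_i}$ in the CFL \eqref{eq:CFLfull_topo} and yields $M^{n*}_{\alpha,i}\ge0$. For (ii), the lake-at-rest check: when $u_{\alpha,i}=0$ and $h_i+z_{b,i}$ is constant, \eqref{eq:zstar} gives $h_{i+1/2-}=h_{i+1/2+}$, hence $M_{\alpha,i+1/2-}=M_{\alpha,i+1/2+}$; the mass fluxes vanish by parity in $\xi$, so $G_{\alpha+1/2,i}=0$ by \eqref{eq:Gdis3} and the implicit solve reduces to the identity, while the moment relations \eqref{eq:intdeltaM-}--\eqref{eq:intdeltaM+} are designed so that each momentum flux \eqref{eq:HRflux2} reduces to the cell-$i$ pressure, making $F_{q_\alpha,i+1/2-}$ and $F_{q_\alpha,i-1/2+}$ cancel; thus $M^{n+1-}_{\alpha,i}=M_{\alpha,i}$.

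The core is (iii). As in Theorem~\ref{thm:entropy_st}\,(ii), I would multiply the explicit substep \eqref{eq:kinsc1} by $\partial_1 H(\overline M_{\alpha,i},\xi,z_{b,i})$ and the implicit substep \eqref{eq:kinsc2} by $\partial_1 H(\overline M^{n+1-}_{\alpha,i},\xi,z_{b,i})$, then add and divide by $l_\alpha$. For the conservative flux terms I would use the second-order Taylor identities \eqref{eq:convleftbis0}--\eqref{eq:convleftbis1}, now written at the reconstructed states, together with the cubic identity \eqref{eq:cubic}, to split each product $\partial_1 H(\overline M_{\alpha,i},\cdot)\,\xi(M_{\alpha,i\pm1/2}-M_{\alpha,i})$ into a conservative energy flux plus a signed quadratic remainder. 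The source contribution is where the identity $\eta'(U_i)\kxi+gz_i=\partial_1 H(\overline M_{\alpha,i},\xi,z_{b,i})$ from \eqref{eq:d1H} enters: multiplying $\delta M_{\alpha,i+1/2-}$ by this entropy variable produces exactly the last line of the fluxes $\widetilde H_{\alpha,i+1/2-}$, $\widetilde H_{\alpha,i-1/2+}$ in \eqref{eq:tildH-}--\eqref{eq:tildH+}, the $\xi M_{i+1/2-}-\xi M_i$ pieces arising when the remainders are reorganised relative to the reconstructed states. The implicit multiplication reproduces the $\widehat H^{n+1-}_{\alpha\pm1/2,i}$ terms and the implicit remainders word for word.

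The hard part is collecting the non-flux remainders into the stated \emph{upper bounds} for $d_{\alpha,i}$ and $e_{\alpha,i}$. Unlike the flat case, where Theorem~\ref{thm:entropy_st} gave exact equalities, the reconstruction forces inequalities: the quadratic remainders must be signed by invoking $M_{\alpha,i+1/2-}\le M_{\alpha,i}$ (and its $i-1/2$ analogue) and the CFL \eqref{eq:CFLfull_topo}, which yields the first two lines of the $d_{\alpha,i}$ estimate. The genuinely new feature is the extra contribution to $e_{\alpha,i}$: the cubic remainders generated by the two sources $\delta M_{\alpha,i\pm1/2\mp}$, after using their explicit form \eqref{eq:defdeltaM} and the moment relations \eqref{eq:intdeltaM-}--\eqref{eq:intdeltaM+}, leave the term proportional to $\sigma_i^2 u_{\alpha,i}^2(\overline M^{n+1-}_{\alpha,i-1/2+}-\overline M^{n+1-}_{\alpha,i+1/2-})^2$, which has no counterpart in the flat setting and must be tracked separately. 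Matching every coefficient against \eqref{eq:defdeltaM} and controlling all signs is the main bookkeeping obstacle; the subdifferential inequality of Lemma~\ref{lemma:energy} is then what converts the resulting kinetic relation into the macroscopic energy inequality.
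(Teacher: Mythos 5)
Your skeleton matches the paper's proof: only the explicit substep \eqref{eq:kinsc1} feels the topography, the implicit computations of Theorem~\ref{thm:entropy_st} and Lemma~\ref{lem:inverse} carry over verbatim, parts (i) and (ii) are argued as in the paper (your bookkeeping $|\xi|+|\xi-u_{\alpha,i}|\le |u_{\alpha,i}|+2\sqrt{2gh_i}$ is a legitimate way to see where the constant $2$ in \eqref{eq:CFLfull_topo} comes from), and you correctly locate the genuinely new contribution to $e_{\alpha,i}$ in the time-discretization remainder generated by the sources $\delta M_{\alpha,i\pm1/2\mp}$, which is the paper's estimate \eqref{eq:l1_1}.

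The gap is in part (iii), at precisely the step that makes the hydrostatic reconstruction entropy-compatible. After rewriting the space remainder with the Taylor identity taken at the reconstructed state (the paper's \eqref{eq:convleftbis}), one is left with the cross term
\begin{equation*}
-\sigma_i\xi\,\bigl(\partial_1H(\overline M_{\alpha,i},\xi,z_{b,i})-\partial_1H(\overline M_{\alpha,i+1/2-},\xi,z_{b,i+1/2})\bigr)\bigl(\overline M_{\alpha,i+1/2}-\overline M_{\alpha,i+1/2-}\bigr),
\end{equation*}
and the stated bound on $d_{\alpha,i}$ is obtained exactly by proving this term is nonpositive and discarding it. You propose to sign it with the monotonicity \eqref{eq:domM_i+1/2} and the CFL condition; that cannot work. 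Indeed, by \eqref{eq:kinH},
\begin{equation*}
\partial_1H(\overline M_{\alpha,i},\xi,z_{b,i})-\partial_1H(\overline M_{\alpha,i+1/2-},\xi,z_{b,i+1/2})
=\frac{g^2\pi^2}{2}\bigl(\overline M_{\alpha,i}^2-\overline M_{\alpha,i+1/2-}^2\bigr)+g\bigl(z_{b,i}-z_{b,i+1/2}\bigr),
\end{equation*}
where the first part is nonnegative by monotonicity but the second is nonpositive because $z_{b,i+1/2}=\max(z_{b,i},z_{b,i+1})\ge z_{b,i}$; no time-step restriction can determine the sign of the sum, let alone of its product with $\overline M_{\alpha,i+1/2+}-\overline M_{\alpha,i+1/2-}$, which itself has no definite sign. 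What actually closes the argument is the free-surface preservation built into the reconstruction: wherever $\overline M_{\alpha,i+1/2-}>0$, definition \eqref{eq:kinmaxw} together with \eqref{eq:lrstates} (both states carry the same velocity $u_{\alpha,i}$) turns the bracket into $g(h_i+z_{b,i})-g(h_{i+1/2-}+z_{b,i+1/2})$, which vanishes identically by \eqref{eq:hlr}--\eqref{eq:zstar}; in the dry-reconstruction case $h_{i+1/2-}=0$ one instead invokes the subdifferential property \eqref{eq:idH'} of $\partial_1H$ at the reconstructed state. This is the paper's chain \eqref{eq:defhprime}--\eqref{eq:keyleft}, and it is where the inequality of Lemma~\ref{lemma:energy} is needed \emph{inside} the proof of the theorem, not only afterwards for the macroscopic Corollary~\ref{corol:macro} as your plan suggests. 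Without this step neither the $d_{\alpha,i}$ nor the $e_{\alpha,i}$ estimate can be established, since the same cross term arises at both interfaces $i+1/2$ and $i-1/2$.
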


\begin{remark}
Notice that the integral with respect to $\xi$ of the last two lines of \eqref{eq:tildH-} (respectively 
of \eqref{eq:tildH+}) vanishes and this will be used in the
Corollary~\ref{corol:macro} to establish the macroscopic energy inequality~\eqref{eq:estentrint}.
\end{remark}

\begin{remark}
The CFL condition \eqref{eq:CFLfull_topo} is a bit less restrictive than the CFL condition \eqref{eq:CFLfull}. It is because here, we do not need to prove the nonpositivity of terms $d_{\alpha,i}$ in relation \eqref{eq:entrfullystat3_JSM} but the nonpositivity of terms $d^1_{\alpha,i}$ in relation \eqref{eq:entrfullystat4_JSM}. Indeed we will prove in Corollaries \ref{corol:entropy_discrete_estim} and \ref{corol:macro} a slightly different entropy inequality 	\eqref{eq:estentrint} that now contains an error term that is proved to be controled. Note that the CFL condition \eqref{eq:CFLfull_topo} can also be written as
\begin{equation}
\sigma_i v_m \leq \beta, \qquad v_m = \max_{1 \leq \alpha \leq N}\max_{i \in I} (|u_{\alpha,i}| + 2\sqrt{2 g h_i}), \qquad \beta < 1.
\label{CFLv2}
\end{equation}
\end{remark}


Similar estimates have been obtained
in~\cite{JSM_entro} in the context of the classical Saint-Venant
system and using the same arguments as
in~\cite[Theorem~3.6]{JSM_entro}, the following corollaries hold.

\begin{corollary}
Under the CFL condition~\eqref{eq:CFLfull_topo}, the scheme
\eqref{eq:kinsc} with the choice~\eqref{eq:defdeltaM} leads to the kinetic
entropy inequality
\begin{multline}
	H(\overline M_{\alpha,i}^{n+1-},\xi,z_{b,i}) = H(\overline M_{\alpha,i},\xi,z_{b,i})-\sigma_i\Bigl(\widetilde{H}_{\alpha,i+1/2-} -
        \widetilde{H}_{\alpha,i-1/2+}\Bigr) \\
-\Delta t^n\Bigl(\widehat{H}_{\alpha+1/2,i}^{n+1-} -
        \widehat{H}_{\alpha-1/2,i}^{n+1-}\Bigr)+ d^1_{\alpha,i} + e^1_{\alpha,i},
	\label{eq:entrfullystat4_JSM}
\end{multline}
where $\widetilde{H}_{\alpha,i+1/2-}$ ,~$\widetilde{H}_{\alpha,i-1/2+}$ are defined in Theorem~\ref{thm:entropy_discrete}
and $\widehat{H}_{\alpha+1/2,i}^{n+1-}$, $\widehat{H}_{\alpha-1/2,i}^{n+1-}$ are defined
in Theorem~\ref{thm:entropy_st}.
The quantities $d^1_{\alpha,i}$, $e^1_{\alpha,i}$ satisfy
\begin{eqnarray*}
d^1_{\alpha,i} & \leq &
\nu_\beta\sigma_i\xi\frac{g^2\pi^2}{6}\left(\overline{M}_{\alpha,i+1/2+} +
  \overline{M}_{\alpha,i+1/2-} \right)
(\overline{M}_{\alpha,i+1/2} - \overline{M}_{\alpha,i+1/2-})^2\nonumber\\
& & 
-\nu_\beta\sigma_i\xi\frac{g^2\pi^2}{6}\left(\overline{M}_{\alpha,i-1/2-} +
  \overline{M}_{\alpha,i-1/2+})\right) (\overline{M}_{\alpha,i-1/2} -
\overline{M}_{\alpha,i-1/2+})^2\nonumber\\
& & - \Delta t^n \frac{g^2\pi^2}{6} \frac{G_{\alpha+1/2,i}}{h_i^{n+1}}
(\overline{M}_{\alpha+1/2,i}^{n+1-}+2\overline{M}_{\alpha,i}^{n+1-})(\overline{M}_{\alpha+1/2,i}^{n+1-}-\overline{M}_{\alpha,i}^{n+1-})^2\nonumber\\
& & + \Delta t^n \frac{g^2\pi^2}{6} \frac{G_{\alpha-1/2,i}}{h_i^{n+1}} (\overline{M}_{\alpha-1/2,i}^{n+1-}+2\overline{M}_{\alpha,i}^{n+1})(\overline{M}_{\alpha-1/2,i}^{n+1-}-\overline{M}_{\alpha,i}^{n+1-})^2,
\label{eq:d_i_topo}\\
e^1_{\alpha,i} & \leq & C_\beta(\sigma_iv_m)^2\frac{g^2 \pi^2}{6}
	\overline M_i\Bigl((\overline M_{\alpha,i}-\overline
                      M_{\alpha,i+1/2-})^2+(\overline M_{\alpha,i} -\overline M_{\alpha,i-1/2+})^2\Bigr)\nonumber\\
& & + \Delta t^n
\frac{g^2\pi^2}{3}\frac{(G_{\alpha+1/2,i}-G_{\alpha-1/2,i})}{h_i ^{n+1}}(\overline{M}_{\alpha,i}^{n+1-})^3.
\end{eqnarray*}
where $\nu_\beta>0$ is a dissipation constant
depending only on $\beta$, see relation \eqref{CFLv2}, and $C_\beta\geq 0$ is a constant depending only on $\beta$.
The term proportional to $C_\beta$ is an error term, while the term
proportional to $\nu_\beta$ is a dissipation term that reinforces the
inequality.
\label{corol:entropy_discrete_estim}
\end{corollary}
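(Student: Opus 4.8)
The plan is to observe that the relation~\eqref{eq:entrfullystat4_JSM} is nothing but the kinetic relation~\eqref{eq:entrfullystat3_JSM} of Theorem~\ref{thm:entropy_discrete} with $d_{\alpha,i}$, $e_{\alpha,i}$ renamed $d^1_{\alpha,i}$, $e^1_{\alpha,i}$; the flux terms $\widetilde H_{\alpha,i\pm1/2\mp}$ and $\widehat H_{\alpha\pm1/2,i}^{n+1-}$ are unchanged. The only work is therefore to replace the upper bounds on $d_{\alpha,i}$, $e_{\alpha,i}$ already supplied by the theorem by the cleaner bounds announced here. All of it consists in simplifying the coefficients in front of the already-squared increments, using the CFL condition~\eqref{eq:CFLfull_topo} in its form~\eqref{CFLv2}, together with the domination and support properties~\eqref{eq:domM_i+1/2}, \eqref{eq:support_deltai+1/2} of the reconstructed Maxwellians and the explicit representation~\eqref{eq:kin_st_up1}--\eqref{eq:kin_st_up2} of $\overline M_{\alpha,i}^{n*}$.

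For $d_{\alpha,i}$ I would treat the first line (interface $i+1/2$) and, symmetrically, the second line, the last two lines being the implicit vertical exchange contributions that are copied verbatim into $d^1_{\alpha,i}$. By the upwind definition of $\overline M_{\alpha,i+1/2}$ the factor $(\overline M_{\alpha,i+1/2}-\overline M_{\alpha,i+1/2-})^2$ is supported in $\{\xi<0\}$, where $\sigma_i\xi<0$, and on the support of the reconstructed states the CFL condition gives $\sigma_i|\xi|\leq\beta<1$. On that set the domination~\eqref{eq:domM_i+1/2} bounds $\overline M_{\alpha,i+1/2-}$ by $\overline M_{\alpha,i}$ and $\overline M_{\alpha,i+1/2+}$ by $\overline M_{\alpha,i+1}$, while~\eqref{eq:kin_st_up1}--\eqref{eq:kin_st_up2} expresses $\overline M_{\alpha,i}^{n*}$ as a CFL-convex combination of neighbouring Maxwellians. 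The coefficient $\overline M_{\alpha,i+1/2+}+2\overline M_{\alpha,i+1/2-}+\sigma_i\xi(2\overline M_{\alpha,i}+\overline M_{\alpha,i}^{n*})$ is then bounded below (the bound is turned around by the sign of $\sigma_i\xi$) by $\nu_\beta(\overline M_{\alpha,i+1/2+}+\overline M_{\alpha,i+1/2-})$ for a $\nu_\beta>0$ depending only on $\beta$, which is exactly the dissipative form claimed; the same sign of $\sigma_i\xi$ on each support makes both lines nonpositive after integration, i.e.\ a genuine reinforcement of the inequality.

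For $e_{\alpha,i}$ I would keep the vertical cubic contribution $\Delta t^n\frac{g^2\pi^2}{3}\frac{G_{\alpha+1/2,i}-G_{\alpha-1/2,i}}{h_i^{n+1}}(\overline M_{\alpha,i}^{n+1-})^3$ unchanged and re-estimate the horizontal part only. Inserting the midpoint and using $(a-b)^2\leq 2(a-\overline M_{\alpha,i})^2+2(\overline M_{\alpha,i}-b)^2$ splits $(\overline M_{\alpha,i-1/2+}-\overline M_{\alpha,i+1/2-})^2$ into the two reconstruction increments $(\overline M_{\alpha,i}-\overline M_{\alpha,i+1/2-})^2$ and $(\overline M_{\alpha,i}-\overline M_{\alpha,i-1/2+})^2$ appearing in the statement. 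Bounding $\sigma_i^2u_{\alpha,i}^2\leq(\sigma_iv_m)^2$ (since $|u_{\alpha,i}|\leq v_m$ by~\eqref{CFLv2}) and $2\overline M_{\alpha,i}+\overline M_{\alpha,i}^{n*}$ by a multiple of the envelope $\overline M_i$ via the domination and the explicit step, one collects the remaining numerical constants into a single $C_\beta\geq0$ and recovers the announced bound on $e^1_{\alpha,i}$, the controlled error term of order $(\sigma_iv_m)^2$.

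The delicate point is the sign and support bookkeeping in the $d$ estimate. The difficulty is that $\overline M_{\alpha,i}$ need not be supported where $\overline M_{\alpha,i+1/2+}$ is, so controlling $\sigma_i\xi(2\overline M_{\alpha,i}+\overline M_{\alpha,i}^{n*})$ by a multiple of $\overline M_{\alpha,i+1/2+}+\overline M_{\alpha,i+1/2-}$ cannot be done pointwise in a naive way; it requires re-expressing $\overline M_{\alpha,i}^{n*}$ through neighbouring reconstructed Maxwellians on each support piece, using~\eqref{eq:kin_st_up1}--\eqref{eq:kin_st_up2}, and checking that after this absorption the coefficient stays comparable to $\overline M_{\alpha,i+1/2+}+\overline M_{\alpha,i+1/2-}$ with a strictly positive $\nu_\beta$ carrying the dissipative sign. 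This is precisely where the strict inequality $\beta<1$ in the CFL condition is indispensable; the $e$ estimate, by contrast, is a routine triangle-inequality splitting once these controls are in hand, exactly as in~\cite[Theorem~3.6]{JSM_entro}.
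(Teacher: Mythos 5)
Your overall plan---treat \eqref{eq:entrfullystat4_JSM} as the identity \eqref{eq:entrfullystat3_JSM} with $d_{\alpha,i},e_{\alpha,i}$ re-estimated, keep the vertical (implicit) contributions verbatim, and use the CFL condition \eqref{CFLv2} plus the triangle inequality to produce $\nu_\beta$ and $C_\beta$---is indeed the strategy of the paper, whose own proof is a one-line reference to \cite[Theorem~3.6]{JSM_entro}. The $e$-part of your argument (splitting $(\overline{M}_{\alpha,i-1/2+}-\overline{M}_{\alpha,i+1/2-})^2$ through $\overline{M}_{\alpha,i}$ and bounding $\sigma_i^2u_{\alpha,i}^2\le(\sigma_iv_m)^2$) is fine. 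But the step you yourself flag as delicate is a genuine gap, and your proposed repair does not work. The pointwise lower bound you need, namely $\overline{M}_{\alpha,i+1/2+}+2\overline{M}_{\alpha,i+1/2-}+\sigma_i\xi\,(2\overline{M}_{\alpha,i}+\overline{M}^{n*}_{\alpha,i})\ \ge\ \nu_\beta\,(\overline{M}_{\alpha,i+1/2+}+\overline{M}_{\alpha,i+1/2-})$ on $\{\xi<0\}$, is simply false, and no re-expression of $\overline{M}^{n*}_{\alpha,i}$ through reconstructed states can save it, because the offending term is $\sigma_i|\xi|\,\overline{M}_{\alpha,i}$ itself and the domination \eqref{eq:domM_i+1/2} goes the wrong way: it bounds $\overline{M}_{\alpha,i+1/2-}$ by $\overline{M}_{\alpha,i}$, never $\overline{M}_{\alpha,i}$ by reconstructed quantities. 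Concretely (take $N=1$ so the vertical terms are absent): let $z_{b,i+1}-z_{b,i}\ge h_i$, so that $h_{i+1/2-}=0$ and $\overline{M}_{\alpha,i+1/2-}\equiv0$; choose $u_i=-\sqrt{2gh_i}$ and $\sigma_i$ close to the CFL limit, and choose $u_{i+1},h_{i+1}$ so that $\xi=u_i<0$ lies just inside the support of $\overline{M}_{\alpha,i+1/2+}$, where $\overline{M}_{\alpha,i+1/2+}(\xi)=\epsilon$ is arbitrarily small. Since $\overline{M}^{n*}_{\alpha,i}\ge0$ under CFL, the coefficient above is at most $\epsilon-2\sigma_i|\xi|\overline{M}_{\alpha,i}(\xi)\approx\epsilon-\tfrac23\,\overline{M}_{\alpha,i}(u_i)<0$, whereas the claimed lower bound $\nu_\beta\epsilon$ is positive; the first line of $d_{\alpha,i}$ is then strictly positive at this $\xi$, while your claimed bound on $d^1_{\alpha,i}$ is negative.

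What makes the corollary true---and what the proof in \cite[Theorem~3.6]{JSM_entro} actually does---is that $(d^1_{\alpha,i},e^1_{\alpha,i})$ is \emph{not} a line-by-line renaming of $(d_{\alpha,i},e_{\alpha,i})$: the part of the positive excess $(\sigma_i\xi)^2(2\overline{M}_{\alpha,i}+\overline{M}^{n*}_{\alpha,i})(\overline{M}_{\alpha,i+1/2}-\overline{M}_{\alpha,i+1/2-})^2$ that the dissipation cannot absorb must be transferred into the $C_\beta$ error term. Writing $\overline{M}_{\alpha,i}=\overline{M}_{\alpha,i+1/2-}+a$ with $a=\overline{M}_{\alpha,i}-\overline{M}_{\alpha,i+1/2-}\ge0$ and $b=|\overline{M}_{\alpha,i+1/2+}-\overline{M}_{\alpha,i+1/2-}|$, the $\overline{M}_{\alpha,i+1/2-}$ piece is absorbed by the dissipation using $\sigma_i|\xi|\le\beta<1$ (this is where $\nu_\beta<1$ is generated), while the piece $a\,b^2$ is handled by a cubic Young inequality $ab^2\le\frac13a^3+\frac23b^3$ together with $a^3\le\overline{M}_{\alpha,i}\,a^2$ and $b^3\le(\overline{M}_{\alpha,i+1/2+}+\overline{M}_{\alpha,i+1/2-})\,b^2$: the $b^3$ part feeds the dissipation, and the $a^3$ part lands exactly in the error term $C_\beta(\sigma_iv_m)^2\,\overline{M}_i\,(\overline{M}_{\alpha,i}-\overline{M}_{\alpha,i+1/2-})^2$ of $e^1_{\alpha,i}$. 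In the scenario above this error term is of size $\overline{M}_{\alpha,i}^3$, which dominates the non-absorbable excess of size $(\sigma_iv_m)^2\overline{M}_{\alpha,i}\,\epsilon^2$, so the accounting closes. This cross-transfer between the $d$-terms and the $e$-terms is the missing idea in your proposal; without it the bound you assert for $d^1_{\alpha,i}$ cannot hold for any $\nu_\beta>0$.
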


\begin{corollary}
Under the CFL condition~\eqref{eq:CFLfull_topo}, integrating the
relation~\eqref{eq:entrfullystat4_JSM} with respect to $\xi$ and
summing for $\alpha=1,\ldots,N$,
yields that
\begin{eqnarray}
	\dsp \sum_{\alpha=1}^N \overline E^{n+1}_{\alpha,i} & \leq & \sum_{\alpha=1}^N \overline E_{\alpha,i} 
	-\sigma_i\Bigl( \sum_{\alpha=1}^N \int_\R \widetilde H_{\alpha,i+1/2}
                  d\xi - \int_\R \widetilde H_{\alpha,i-1/2} d\xi\Bigr)\nonumber\\
& &  +C_\beta (\sigma_iv_m)^2\biggl(g(h_i-h_{i+1/2-})^2+g(h_i-h_{i-1/2+})^2 \biggr).
	\label{eq:estentrint}
\end{eqnarray}
As in~\cite[Corollary~3.7]{JSM_entro}, we conclude that relation~\eqref{eq:estentrint} is the discrete entropy inequality associated to the HR scheme~\eqref{eq:upU0},\eqref{eq:hlr},\eqref{eq:zstar}
with kinetic numerical flux~\eqref{eq:HRflux12}-\eqref{eq:HRflux2}.
With \eqref{eq:lrstates}-\eqref{eq:zstar} one has
\begin{equation*}
	0\leq h_i-h_{i+1/2-}\leq|z_{b,i+1}-z_{b,i}|,\quad 0\leq h_i-h_{i-1/2+}\leq |z_{b,i}-z_{b,i-1}|.
	\label{eq:errestexpl}
\end{equation*}
We conclude that the quadratic error terms proportional to $C_\beta$ in the right-hand side of \eqref{eq:estentrint}
(divide \eqref{eq:estentrint} by $\Delta t^n$ to be consistent with \eqref{eq:energy_euler_eq})
has the following key properties: it vanishes identically when $z_b=cst$ (no topography)
or when $\sigma_i\rightarrow 0$ (semi-discrete limit), and as soon as the topography
is Lipschitz continuous, it tends to zero strongly when the grid size tends to $0$
(consistency with the continuous entropy inequality \eqref{eq:energy_euler_eq}),
even if the solution contains shocks.
\label{corol:macro}
\end{corollary}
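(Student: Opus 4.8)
The plan is to integrate the per-layer, per-cell kinetic inequality \eqref{eq:entrfullystat4_JSM} of Corollary~\ref{corol:entropy_discrete_estim} in $\xi$ over $\R$ and sum over $\alpha=1,\ldots,N$, following the same route already used in the flat-bottom case (Corollary~\ref{corol:macro_st}). Integrating the left-hand side and the first two terms on the right yields, by the definition $\overline E_{\alpha,i}=\int_\R H(\overline M_{\alpha,i},\xi,z_{b,i})\,d\xi$ and its analogue for $\overline M^{n+1-}_{\alpha,i}$, the quantities $\sum_\alpha \overline E^{n+1-}_{\alpha,i}$, $\sum_\alpha \overline E_{\alpha,i}$ and a horizontal flux difference; the last two lines of \eqref{eq:tildH-}--\eqref{eq:tildH+} have vanishing $\xi$-integral (as observed in the remark following Theorem~\ref{thm:entropy_discrete}, using that $\partial_1H(\overline M_{\alpha,i},\cdot,z_{b,i})$ is affine in $\xi$, see \eqref{eq:d1H}, together with the moment relations \eqref{eq:intdeltaM-}--\eqref{eq:intdeltaM+} for $\delta M$), so that only the conservative flux $\widetilde H_{\alpha,i\pm1/2}$ of \eqref{eq:estentrint} survives. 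The vertical flux terms $\widehat H^{n+1-}_{\alpha\pm1/2,i}$ telescope in $\alpha$ and vanish thanks to the boundary conditions $G_{1/2,i}=G_{N+1/2,i}=0$, see \eqref{eq:Qlim}. It then remains to control $\sum_\alpha\int_\R d^1_{\alpha,i}\,d\xi$ and $\sum_\alpha\int_\R e^1_{\alpha,i}\,d\xi$ using the bounds of Corollary~\ref{corol:entropy_discrete_estim}.

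For the $d^1$ terms I would argue pointwise in $\xi$. The first two lines of the bound on $d^1_{\alpha,i}$ carry the factor $\sigma_i\xi$ multiplied by the upwind jumps $(\overline M_{\alpha,i+1/2}-\overline M_{\alpha,i+1/2-})$ and $(\overline M_{\alpha,i-1/2}-\overline M_{\alpha,i-1/2+})$, which by the upwind structure \eqref{eq:Mupwind1}--\eqref{eq:Mupwind2} are supported respectively on $\{\xi<0\}$ and $\{\xi>0\}$; hence each integrand is nonpositive under the CFL condition \eqref{eq:CFLfull_topo}, so these dissipation terms (proportional to $\nu_\beta$) only reinforce the inequality and may be discarded. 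The last two lines of $d^1_{\alpha,i}$ are the implicit vertical-exchange contributions, and by the upwind definition \eqref{eq:defN_Mbis} of $\overline M^{n+1-}_{\alpha\pm1/2,i}$ each is individually nonpositive: it is zero when the relevant $G$ has the ``wrong'' sign (the squared jump vanishes) and, otherwise, it is a manifestly nonnegative cubic expression multiplied by $-\Delta t^n G_{\alpha+1/2,i}/h_i^{n+1}\le0$. Therefore $\sum_\alpha\int_\R d^1_{\alpha,i}\,d\xi\le0$.

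The delicate point is $\sum_\alpha\int_\R e^1_{\alpha,i}\,d\xi$, which splits into a vertical cubic term proportional to $(G_{\alpha+1/2,i}-G_{\alpha-1/2,i})(\overline M^{n+1-}_{\alpha,i})^3$ and the error term proportional to $C_\beta(\sigma_iv_m)^2$. The cubic term would telescope to zero as in Proposition~\ref{prop:entropy_kin}, since $\frac{g^2\pi^2}{3}\int_\R(\overline M_{\alpha,i})^3\,d\xi$ depends only on the total depth and is thus independent of $\alpha$, while $\sum_\alpha(G_{\alpha+1/2,i}-G_{\alpha-1/2,i})=0$ by \eqref{eq:Qlim}. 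The obstruction is that $\overline M^{n+1-}_{\alpha,i}$ is the post-transport, pre-relaxation density and is \emph{not} a Maxwellian, so this moment identity fails. I would remove this exactly as in the flat case: reinstate collision terms $Q_{\alpha,i}$, $Q^{n\$}_{\alpha,i}$ satisfying \eqref{eq:collisionbis}, repeat the computation with $^{n*},^{n+1-}$ replaced by $^{n\$},^{n+1}$, and use that $\partial_1H(\overline M_{\alpha,i},\cdot)$ and $\partial_1H(\overline M^{n+1}_{\alpha,i},\cdot)$ are affine in $\xi$, see \eqref{eq:d1H}, so that the collision terms integrate to zero; since $\overline M^{n+1}_{\alpha,i}$ is now a genuine Maxwellian, the cubic contribution sums to zero. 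For the error term I would use the moment relations \eqref{eq:kinmomM} and the reconstruction bounds \eqref{eq:domM_i+1/2} to evaluate $\frac{g^2\pi^2}{6}\int_\R\overline M_i(\overline M_{\alpha,i}-\overline M_{\alpha,i+1/2-})^2\,d\xi$ and its $i-1/2$ counterpart, and sum over $\alpha$ to reach $C_\beta(\sigma_iv_m)^2\bigl(g(h_i-h_{i+1/2-})^2+g(h_i-h_{i-1/2+})^2\bigr)$; this is the layer-averaged analogue of the scalar estimate of \cite[Theorem~3.6]{JSM_entro}, and it is where the structural choice \eqref{eq:defdeltaM} of $\delta M$ is used.

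Collecting the four contributions gives the inequality for $\sum_\alpha\overline E^{n+1-}_{\alpha,i}$. The final step passes from $\overline E^{n+1-}_{\alpha,i}$ to $\overline E^{n+1}_{\alpha,i}$: since $\overline M^{n+1}_{\alpha,i}$ is by construction the Maxwellian sharing the first two moments of $\overline M^{n+1-}_{\alpha,i}$, the energy-minimization principle Lemma~\ref{lemma:energy}(ii) yields $\sum_\alpha\overline E^{n+1}_{\alpha,i}\le\sum_\alpha\overline E^{n+1-}_{\alpha,i}$, which produces \eqref{eq:estentrint}. The consistency claims then follow directly from the reconstruction estimates $0\le h_i-h_{i+1/2-}\le|z_{b,i+1}-z_{b,i}|$ and $0\le h_i-h_{i-1/2+}\le|z_{b,i}-z_{b,i-1}|$: the quadratic error vanishes identically when $z_b$ is constant and in the semi-discrete limit $\sigma_i\to0$, and for Lipschitz topography it is $O(\Delta x^2)$ with a constant independent of the regularity of the solution, hence tends to zero as the mesh is refined even across shocks. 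I expect the main obstacle to be the treatment of $e^1$: recognizing that the non-Maxwellian nature of $\overline M^{n+1-}$ blocks the naive cancellation of the vertical cubic term, recovering it through the idealized collision/relaxation argument, and establishing the sharp moment estimate controlling the $C_\beta$ error.
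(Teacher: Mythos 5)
Your proposal is correct and follows essentially the same route as the paper's proof: nonpositivity of $d^1_{\alpha,i}$ under the CFL condition~\eqref{eq:CFLfull_topo}, elimination of the vertical cubic term in $e^1_{\alpha,i}$ by the collision-term argument of Corollary~\ref{corol:macro_st} (so that $\int_\R (\overline M^{n+1-}_{\alpha,i})^3 d\xi = \int_\R (\overline M^{n+1}_{\alpha,i})^3 d\xi$ and the sum over $\alpha$ telescopes to zero), integration of the $C_\beta$ error term as in \cite[Theorem~3.6]{JSM_entro}, and the final passage from $\overline E^{n+1-}_{\alpha,i}$ to $\overline E^{n+1}_{\alpha,i}$ via the minimization principle of Lemma~\ref{lemma:energy}. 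The paper states this proof very tersely by reference to the flat-bottom case; you have simply supplied the details it leaves implicit.
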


\begin{proof}[Proof of theorem~\ref{thm:entropy_discrete}]
\noindent (i) 
The proof is very similar to the one of Theorem \ref{thm:entropy_st}, Item (i) but
the right hand side in \eqref{eq:ineqT} is now 
$$
M_{\alpha,i}
-\sigma_i\bigl( \xi M_{\alpha,i+1/2} + \delta M_{\alpha,i+1/2} -
M_{\alpha,i-1/2} - \delta M_{\alpha,i-1/2}\bigr) \geq 
 \bigl(1-\sigma_i (|\xi| + |u_{\alpha,i}|)\bigr) M_{\alpha,i}
 $$
that can be proved to be positive under the CFL condition ~\eqref{eq:CFLfull_topo}.

\noindent (ii) When $u_{\alpha,i} =0$, $h_i + z_{b,i}=cst$ for any $\alpha,i$ then
for any $\xi$ we have $M_{\alpha,i+1/2+}=M_{\alpha,i+1/2-}=M_{\alpha,i-1/2+}=M_{\alpha,i-1/2-}$, $G_{\alpha+1/2,i}=G_{\alpha-1/2,i}=0$  and therefore
$M^{n+1-}_{\alpha,i} = M_{\alpha,i}$ proving (ii).

\noindent (iii) In order to prove (iii) we proceed as in the proof of
Theorem~\ref{thm:entropy_st}, item (ii) but the computations are more complex because of the
topography source terms. The implicit part has not been modified and then, by multiplying~\eqref{eq:kinsc2}, we still get the relation \eqref{eq:entrfullystat2_JSM}.
The complexity lies in the explicit part. Let us multiply~\eqref{eq:kinsc1}, with topography terms defined by~\eqref{eq:defdeltaM}, by $\partial_1 H (\overline{M}_{\alpha,i},\xi,z_{b,i})$. After computations that are similar to what we did to prove Theorem~\ref{thm:entropy_st}, we get
\begin{multline}
H(\overline M^{n*}_{\alpha,i},\xi,z_{b,i})  = 
H(\overline{M}_{\alpha,i},\xi,z_{b,i}) \\
-\sigma_i\xi\bigl(
H(\overline{M}_{\alpha,i+1/2},\xi,z_{b,i+1/2}) -
    H(\overline{M}_{\alpha,i-1/2},\xi,z_{b,i-1/2})\bigr) +
    R^x_{\alpha,i} + R^t_{\alpha,i},
\label{eq:cindis01_d0}
\end{multline}
where $R^x_{\alpha,i}$ (resp. $R^t_{\alpha,i}$) is an error term
coming from the space (resp. time)
discretization 
\begin{eqnarray*}
R^x_{\alpha,i} & = & \sigma_i\xi \left( H(\overline{M}_{\alpha,i+1/2},\xi,z_{b,i+1/2}) -
  \partial_1 H (\overline{M}_{\alpha,i},\xi,z_{b,i})\overline{M}_{\alpha,i+1/2}\right)\nonumber\\
& & -\sigma_i\xi \left( H(\overline{M}_{\alpha,i-1/2},\xi,z_{b,i-1/2}) -
  \partial_1 H (\overline{M}_{\alpha,i},\xi,z_{b,i})\overline{M}_{\alpha,i-1/2}\right)\nonumber\\
& & + \sigma_i (\xi - u_{\alpha,i})
\partial_1 H (\overline{M}_{\alpha,i},\xi,z_{b,i})(\overline{M}_{\alpha,i+1/2-}
- \overline{M}_{\alpha,i-1/2+}),\\
R^t_{\alpha,i} & = & \frac{\pi^2 g^2}{6} (\overline M_{\alpha,i}^{n*} + 2
\overline{M}_{\alpha,i}) (\overline M_{\alpha,i}^{n*} -
\overline{M}_{\alpha,i})^2.
\end{eqnarray*}
Not that $R^t_{\alpha,i}$ is equal to the term $L_{\alpha,i}$ defined in the proof of Theorem~\ref{thm:entropy_st}. From the definition of the explicit part~\eqref{eq:kinsc1} 
and of the source terms $\delta \overline{M}_{\alpha,i-1/2+}$ \eqref{eq:defdeltaM} we can write
\begin{eqnarray}
R^t_{\alpha,i}  & = &
 \frac{g^2 \pi^2}{6} \sigma_i^2
(2\overline{M}_{\alpha,i} + \overline M^{n*}_{\alpha,i}) \left( \xi
  \overline{M}_{\alpha,i+1/2} - \xi\overline{M}_{\alpha,i-1/2} + \delta
  \overline{M}_{\alpha,i+1/2-} -  \delta \overline{M}_{\alpha,i-1/2+}
  \right)^2\nonumber\\
& \leq & \frac{2g^2 \pi^2}{3} \sigma_i^2
(2\overline{M}_{\alpha,i}+\overline M^{n*}_{\alpha,i}) \left(
         \xi^2\bigl( \overline M_{i+1/2+} - \overline M_{i+1/2-}
  \bigr)^2\1_{\xi <0} \right.\nonumber\\
& & \left. + \xi^2\bigl( \overline M_{i-1/2-} - \overline M_{i-1/2+}
  \bigr)^2\1_{\xi >0} +
u_i^2\bigl( \overline M_{i+1/2-} - \overline M_{i-1/2+}
\bigr)^2\right).
\label{eq:l1_1}
\end{eqnarray}
For the quantity $R^x_{\alpha,i}$, we first write $R^x_{\alpha,i} = R^x_{\alpha,i+} + R^x_{\alpha,i-}$ with
\begin{eqnarray*}
R^x_{\alpha,i+} & = &  \sigma_i\xi \left( H(\overline{M}_{\alpha,i+1/2},\xi,z_{b,i+1/2}) -
  \partial_1 H (\overline{M}_{\alpha,i},\xi,z_{b,i})(\overline{M}_{\alpha,i+1/2}-\overline{M}_{\alpha,i+1/2-})\right)\nonumber\\
& & - \sigma_i u_{\alpha,i}\partial_1 H (\overline{M}_{\alpha,i},\xi,z_{b,i})\overline{M}_{\alpha,i+1/2-} - \sigma_i\xi
H(\overline{M}_{\alpha,i},\xi,z_{b,i}),
\label{eq:L2+}
\end{eqnarray*}
and
\begin{eqnarray*}
R^x_{\alpha,i-} & = &  -\sigma_i\xi \left( H(\overline{M}_{\alpha,i-1/2},\xi,z_{b,i-1/2}) -
  \partial_1 H (\overline{M}_{\alpha,i},\xi,z_{b,i})(\overline{M}_{\alpha,i-1/2}-\overline{M}_{\alpha,i-1/2+})\right)\nonumber\\
& & + \sigma_i u_{\alpha,i}\partial_1 H (\overline{M}_{\alpha,i},\xi,z_{b,i})\overline{M}_{\alpha,i-1/2+}  + \sigma_i\xi H(\overline{M}_{\alpha,i},\xi,z_{b,i}),
\end{eqnarray*}
Let us rewrite $R^x_{\alpha,i+}$ under the form
\begin{eqnarray*}
R^x_{\alpha,i+} 
& = &  \sigma_i\xi \left( H(\overline{M}_{\alpha,i+1/2},\xi,z_{b,i+1/2}) -
  \partial_1 H (\overline{M}_{\alpha,i+1/2-},\xi,z_{b,i+1/2})(\overline{M}_{\alpha,i+1/2}-\overline{M}_{\alpha,i+1/2-})\right)\nonumber\\
& & -\sigma_i \xi (\partial_1 H (\overline{M}_{\alpha,i},\xi,z_{b,i}) - \partial_1 H (\overline{M}_{\alpha,i+1/2-},\xi,z_{b,i+1/2}))(\overline{M}_{\alpha,i+1/2}-\overline{M}_{\alpha,i+1/2-})\nonumber\\
& & - \sigma_i u_{\alpha,i}\partial_1 H (\overline{M}_{\alpha,i},\xi,z_{b,i})\overline{M}_{\alpha,i+1/2-}  - \sigma_i\xi H(\overline{M}_{\alpha,i},\xi,z_{b,i}).
\end{eqnarray*}
Using identity \eqref{eq:cubic} but for $\overline{M}_{\alpha,i+1/2+}$ and $\overline{M}_{\alpha,i+1/2-}$, 
we can obtained a relation similar to \eqref{eq:convleftbis0} that characterizes the linear dissipation associated to the scheme 
\begin{equation}\begin{array}{l}
	\dsp H(\overline{M}_{\alpha,i+1/2},\xi,z_{b,i+1/2}) = H(\overline{M}_{\alpha,i+1/2-},\xi,z_{b,i+1/2})\\
	\dsp\mkern 100mu
	+\partial_1 H(\overline{M}_{\alpha,i+1/2-},\xi,z_{b,i+1/2})(\overline{M}_{\alpha,i+1/2}-\overline{M}_{\alpha,i+1/2-})\\
	\dsp\mkern 100mu
+ \frac{g^2\pi^2}{6}(\overline{M}_{\alpha,i+1/2+} + 2\overline{M}_{\alpha,i+1/2-}) (\overline{M}_{\alpha,i+1/2} - \overline{M}_{\alpha,i+1/2-})^2.
	\label{eq:convleftbis}
	\end{array}
\end{equation}
Relation~\eqref{eq:convleftbis} allows then to write $R^x_{\alpha,i+}$ under the form
\begin{eqnarray*}
R^x_{\alpha,i+} & = & \sigma_i\xi \bigl( H(\overline{M}_{\alpha,i+1/2-},\xi,z_{b,i+1/2})
  +\frac{g^2\pi^2}{6}(\overline{M}_{\alpha,i+1/2+} + 2\overline{M}_{\alpha,i+1/2-}) (\overline{M}_{\alpha,i+1/2} -
  \overline{M}_{\alpha,i+1/2-})^2\bigr)\nonumber\\
& & -\sigma_i \xi (\partial_1 H (\overline{M}_{\alpha,i},\xi,z_{b,i}) -
\partial_1 H (\overline{M}_{\alpha,i+1/2-},\xi,z_{b,i+1/2}))(\overline{M}_{\alpha,i+1/2}-\overline{M}_{\alpha,i+1/2-})\nonumber\\
& & - \sigma_i u_{\alpha,i} \partial_1 H (\overline{M}_{\alpha,i},\xi,z_{b,i})\overline{M}_{\alpha,i+1/2-} -  \sigma_i\xi H(\overline{M}_{\alpha,i},\xi,z_{b,i}).
\end{eqnarray*}
Next, if $\overline{M}_{\alpha,i}(\xi)>0$, one has, refer to \eqref{eq:idH'}  
\begin{equation}
\partial_1 H(\overline{M}_{\alpha,i},\xi,z_{b,i})=\eta'(U_{\alpha,i})\kxi+gz_{b,i}
\label{eq:defhprime}
\end{equation}
and then
\begin{eqnarray}
	&&\partial_1 H(\overline{M}_{\alpha,i},\xi,z_{b,i})(\overline{M}_{\alpha,i+1/2+}-\overline{M}_{\alpha,i+1/2-})\nonumber
	\\
	&& \qquad 
	=\bigl(\eta'(U_{\alpha,i})\kxi+gz_{b,i}\bigr)(\overline{M}_{\alpha,i+1/2+}-\overline{M}_{\alpha,i+1/2-}),
	\label{eq:idprimeleft}
\end{eqnarray}
whereas, see \eqref{eq:convineqH_0},
\begin{eqnarray}
	&&H \partial_1 H (\overline{M}_{\alpha,i+1/2-},\xi,z_{b,i+1/2})(\overline{M}_{\alpha,i+1/2+}-\overline{M}_{\alpha,i+1/2-})\nonumber\\
	&& \qquad 
	\geq \bigl(\eta'(U_{\alpha,i+1/2-})\kxi+gz_{b,i+1/2}\bigr)(\overline{M}_{\alpha,i+1/2+}-\overline{M}_{\alpha,i+1/2-}).
	\label{eq:leftpos}
\end{eqnarray}
Taking the difference between \eqref{eq:leftpos} and \eqref{eq:idprimeleft}, we obtain
\begin{eqnarray}
	&&H \partial_1 H (\overline{M}_{\alpha,i+1/2-},\xi,z_{b,i+1/2})(\overline{M}_{\alpha,i+1/2+}-\overline{M}_{\alpha,i+1/2-})\nonumber\\
	&& \quad -\partial_1 H (\overline{M}_{\alpha,i},\xi,z_{b,i})(\overline{M}_{\alpha,i+1/2+}-\overline{M}_{\alpha,i+1/2-})\nonumber\\
	&& \qquad \geq l_\alpha\bigl(gh_{i+1/2-}-gh_{i}+gz_{b,i+1/2}-gz_{b,i}\bigr)(\overline{M}_{\alpha,i+1/2+}-\overline{M}_{\alpha,i+1/2-})\geq 0.
	\label{eq:keyleft}
\end{eqnarray}
From~\eqref{eq:keyleft}, it comes
\begin{eqnarray*}
R^x_{\alpha,i+}  & \leq &  \sigma_i\xi \bigl( H(\overline{M}_{\alpha,i+1/2-},\xi,z_{b,i+1/2})
  +\frac{g^2\pi^2}{6}(\overline{M}_{\alpha,i+1/2+} + 2\overline{M}_{\alpha,i+1/2-}) (\overline{M}_{\alpha,i+1/2} -
  \overline{M}_{\alpha,i+1/2-})^2\bigr)\nonumber\\
& & - \sigma_i u_{\alpha,i} \partial_1 H (\overline{M}_{\alpha,i},\xi,z_{b,i})\overline{M}_{\alpha,i+1/2-} -  \sigma_i\xi H(\overline{M}_{\alpha,i},\xi,z_{b,i})\nonumber\\
& = &  \sigma_i\xi \bigl( H(\overline{M}_{\alpha,i+1/2-},\xi,z_{b,i+1/2})
  +\frac{g^2\pi^2}{6}(\overline{M}_{\alpha,i+1/2+} + 2\overline{M}_{\alpha,i+1/2-}) (\overline{M}_{\alpha,i+1/2} -
  \overline{M}_{\alpha,i+1/2-})^2\bigr)\nonumber\\
& & - \sigma_i \partial_1 H (\overline{M}_{\alpha,i},\xi,z_{b,i})(\xi \overline{M}_{\alpha,i+1/2-} - \xi \overline{M}_{\alpha,i} + \delta
\overline{M}_{\alpha,i+1/2-}) -  \sigma_i\xi H(\overline{M}_{\alpha,i},\xi,z_{b,i}).
\end{eqnarray*}
Then, from \eqref{eq:defhprime}, we also get
\begin{eqnarray}
	&&\partial_1 H (\overline{M}_{\alpha,i},\xi,z_{b,i})
	\bigl(\xi \overline{M}_{\alpha,i+1/2-}-\xi \overline{M}_{\alpha,i}+\delta \overline{M}_{\alpha,i+1/2-}\bigr)\nonumber
	\\
	&& \qquad 
	=\bigl(\eta'(U_{\alpha,i})\kxi+gz_{b,i}\bigr)
	\bigl(\xi \overline{M}_{\alpha,i+1/2-}-\xi \overline{M}_{\alpha,i}+\delta \overline{M}_{\alpha,i+1/2-}\bigr)
	\label{eq:iidleft}
\end{eqnarray}
From~\eqref{eq:iidleft} it comes 
\begin{eqnarray}
R^x_{\alpha,i+} 
& \leq &  \sigma_i\xi \bigl( H(\overline{M}_{i+1/2-},\xi,z_{b,i+1/2})
  +\frac{g^2\pi^2}{6}(\overline{M}_{\alpha,i+1/2+} + 2\overline{M}_{\alpha,i+1/2-}) (\overline{M}_{\alpha,i+1/2} -
  \overline{M}_{\alpha,i+1/2-})^2\bigr)\nonumber\\
& & - \sigma_i \Bigl(\eta'(U_{\alpha,i})\kxi+gz_{b,i}\Bigr)(\xi \overline{M}_{\alpha,i+1/2-} - \xi \overline{M}_{\alpha,i} + \delta
\overline{M}_{\alpha,i+1/2-}) \nonumber\\
& & -  \sigma_i\xi H(\overline{M}_{\alpha,i},\xi,z_{b,i}).
\label{eq:estim_L2p}
\end{eqnarray}
An analoguous inequality can obviously be obtained for $R^x_{\alpha,i-} $ under the form
\begin{eqnarray}
R^x_{\alpha,i-} 
& \leq &  -\sigma_i\xi \bigl( H(\overline{M}_{\alpha,i-1/2+},\xi,z_{b,i-1/2})
  +\frac{g^2\pi^2}{6}(\overline{M}_{\alpha,i-1/2-} + 2\overline{M}_{\alpha,i-1/2+}) (\overline{M}_{\alpha,i-1/2} -
  \overline{M}_{\alpha,i-1/2+})^2\bigr)\nonumber\\
& & + \sigma_i \Bigl(\eta'(U_{\alpha,i})\kxi+gz_{b,i}\Bigr)(\xi \overline{M}_{\alpha,i-1/2+} - \xi \overline{M}_{\alpha,i} + \delta
\overline{M}_{\alpha,i-1/2+}) \nonumber\\
& & + \sigma_i\xi H(\overline{M}_{\alpha,i},\xi,z_{b,i}).
\label{eq:estim_L2m}
\end{eqnarray}

Adding the relation~\eqref{eq:entrfullystat2_JSM} to 
\eqref{eq:cindis01_d0} with the
estimates~\eqref{eq:l1_1},~\eqref{eq:estim_L2p},~\eqref{eq:estim_L2m} gives~\eqref{eq:entrfullystat3_JSM} proving the result.
\end{proof}

\begin{proof}[Proof of corollary~\ref{corol:entropy_discrete_estim}]
The proof of the result is similar to the one given by some of the
authors in ~\cite[Theorem~3.6]{JSM_entro}.
\end{proof}

\begin{proof}[Proof of corollary~\ref{corol:macro}]
As in the proof of Corollary~\ref{corol:macro_st}, it is possible to prove
that
$$\int_\R
(\overline{M}_{\alpha,i}^{n+1-})^3 d\xi = \int_\R
(\overline{M}_{\alpha,i}^{n+1})^3 d\xi,$$
and under the CFL condition~\eqref{eq:CFLfull} the quantity $d_{\alpha,i}^1$ is non
positive then
the sum for $\alpha=1,\ldots,N$ of relations~\eqref{eq:entrfullystat4_JSM} integrated in $\xi$ 
gives the result.
\end{proof}

\section{Fully discrete entropy inequality for the layer-averaged Navier-Stokes
  system}
\label{sec:lans}

The layer-averaging applied to the Euler system in
Section~\ref{sec:av_euler} can also be applied to the Navier-Stokes
system, see~\cite{BDGSM}. Considering a simplified Newtonian rheology,
the Navier-Stokes system~\eqref{eq:NS_2d1}-\eqref{eq:NS_2d3} can be written under the form
\begin{eqnarray}
& & \frac{\partial u}{\partial x}+\frac{\partial w}{\partial z} =0,\label{eq:NS_2d1_simple}\\
& & \frac{\partial u}{\partial t } + \frac{\partial u^2}{\partial x }
    +\frac{\partial uw }{\partial z }+ \frac{\partial p}{\partial x }
    = \mu\frac{\partial^2 u}{\partial x^2} + \mu\frac{\partial^2  u}{\partial z^2},\label{eq:NS_2d2_simple}\\
& & \hspace*{3cm} \frac{\partial p}{\partial z} = -g,
\label{eq:NS_2d3_simple}
\end{eqnarray}
where $\mu$ is a viscosity coefficient. The
system~\eqref{eq:NS_2d1_simple}-\eqref{eq:NS_2d3_simple} is completed
with the kinematic boundary conditions~\eqref{eq:free_surf},\eqref{eq:bottom} and suitable
dynamic boundary conditions.

Its layer-averaged version is given by
\begin{eqnarray}
& & \frac{\partial}{\partial t} h +
\frac{\partial}{\partial x} \sum_{j=1}^N h_\alpha u _\alpha = 0,\label{eq:NS_avz_111}\\
& & \frac{\partial}{\partial t} (h_\alpha u_\alpha) +
\frac{\partial}{\partial x}  \left( h_\alpha u_\alpha^2 + \frac{g}{2} h_\alpha h\right)
= - gh_{\alpha}\frac{\partial z_b}{\partial x} +
u_{\alpha+1/2}G_{\alpha+1/2} - u_{\alpha-1/2}
G_{\alpha-1/2}\nonumber\\
& & + \frac{\partial}{\partial x}\left(4\mu h_\alpha
  \frac{\partial u_\alpha}{\partial x}\right) + 2\mu\frac{u_{\alpha+1}
  - u_\alpha }{h_{\alpha+1}+h_\alpha} - 2\mu\frac{u_{\alpha} -
  u_{\alpha-1}}{h_{\alpha}+h_{\alpha-1}},\qquad
\alpha=2,\ldots,N-1\label{eq:NS_avz_222}\\
& & \frac{\partial}{\partial t} (h_1 u_1) +
\frac{\partial}{\partial x}  \left( h_1 u_1^2 + \frac{g}{2} h_1 h\right)
= -gh_1\frac{\partial z_b}{\partial x} +
u_{3/2}G_{3/2} \nonumber\\
& & \hspace*{6cm} + \frac{\partial}{\partial x}\left(4\mu h_1
  \frac{\partial u_1}{\partial x}\right) + 2\mu\frac{u_{2}
  - u_1 }{h_{2}+h_1} - \kappa u_1,\label{eq:NS_avz_222_1}\\
& & \frac{\partial}{\partial t} (h_N u_N) + \frac{\partial}{\partial
  x}  \left( h_N u_N^2 + \frac{g}{2} h_N h \right)
= -gh_N \frac{\partial z_b}{\partial x} - u_{N-1/2}G_{N-1/2}\nonumber\\
& & \hspace*{6cm}+ \frac{\partial}{\partial x}\left(4\mu h_N
  \frac{\partial u_N}{\partial x}\right) - 2\mu\frac{u_{N} -
  u_{N-1}}{h_{N}+h_{N-1}},\label{eq:NS_avz_222_N}\\
& & \frac{\partial}{\partial t} \left(\frac{z_{\alpha+1/2}^2 - z_{\alpha-1/2}^2}{2}\right) +
\frac{\partial}{\partial x} \left( \frac{z_{\alpha+1/2}^2 -
    z_{\alpha-1/2}^2}{2}u_\alpha\right)  dz = h_\alpha
w_\alpha\nonumber\\
& & \hspace*{6cm} + z_{\alpha+1/2}G_{\alpha+1/2} - z_{\alpha-1/2}
G_{\alpha-1/2},\label{eq:NS_avz_444}
\end{eqnarray}
where $\kappa$ is a Navier friction coefficient at the bottom and the exchange terms $G_{\alpha+1/2}$ are given by~\eqref{eq:Qalphabis}-\eqref{eq:Qlim}.

The system~\eqref{eq:NS_avz_111}-\eqref{eq:NS_avz_444} is rewritten
under the compact form
\begin{equation}
\frac{\partial U}{\partial t} + \frac{\partial F(U)}{\partial x} =
S_e(U,\partial_t U,\partial_x U) + S_b(U) + S_{v,f}(U),
\label{eq:glo}
\end{equation}
with $U$ defined by~\eqref{eq:stateU}.

We denote with $F(U)$ the flux of the conservative part,
and with $S_e(U,\partial_t U,\partial_x U)$, $S_b(U)$ and $S_{v,f}(U)$ the source terms,
 representing respectively the
mass transfer, the topography, and the viscous and
friction effects.

For the time discretization, we apply a time splitting technique to the equations 
(\ref{eq:glo}) and we write
\begin{eqnarray}
&&\frac{{\tilde U}^{n+1}-U^{n}}{\Delta t^n} + \frac{\partial
  F(U^n)}{\partial x} = S_e(U^n,{\tilde U}^{n+1}) +  S_b(U^n),
\label{eq:glo1}\\
&&\frac{U^{n+1}-{\tilde U}^{n+1}}{\Delta t^n} - S_{v,f} (U^{n},U^{n+1})=0.
\label{eq:glo2}
\end{eqnarray}
Equation~\eqref{eq:glo1} corresponds to the semi-discrete in time
version of the layer-averaged Euler
system~\eqref{eq:HH}-\eqref{eq:eq44} whose discretization has been studied
in Section~\ref{sec:properties}. It remains to propose a discretization for Eq.~\eqref{eq:glo2}.
%
%
%
Since the viscous and friction terms $S_{v,f}$ in
(\ref{eq:glo2}) are
dissipative, they are treated via a semi-implicit scheme for stability
reasons. By using a finite differences discretization in space, this leads to
\begin{eqnarray}
(h u)^{n+1}_{\alpha,i} & = & (\widetilde{hu})^{n+1}_{\alpha,i} +
\frac{8\mu}{\Delta x_i} \left(
  h_{\alpha,i+1/2}\frac{u^{\frac{1}{2}}_{\alpha,i+1} - u^{\frac{1}{2}}_{\alpha,i}}{\Delta x_{i+1} + \Delta x_i} - 
 h_{\alpha,i-1/2} \frac{u^{\frac{1}{2}}_{\alpha,i} - u^{\frac{1}{2}}_{\alpha,i-1}}{\Delta x_{i} + \Delta x_{i-1}} \right)\nonumber\\
& & + 2\mu\frac{u^{\frac{1}{2}}_{\alpha+1,i}
  - u^{\frac{1}{2}}_{\alpha,i} }{h_{\alpha+1,i}+h_{\alpha,i}} - 2\mu\frac{u^{\frac{1}{2}}_{\alpha,i} -
  u^{\frac{1}{2}}_{\alpha-1,i}}{h_{\alpha,i}+h_{\alpha-1,i}} - \kappa \delta_{1,\alpha} u^{\frac{1}{2}}_{\alpha,i},\label{eq:glo2_dis}
\end{eqnarray}
for $\alpha=1,\ldots,N$. The superscript $y^{\frac{1}{2}}$ means
$y^{\frac{1}{2}} = \frac{y^n+ {\tilde y}^{n+1}}{2}$ and
$\delta_{1,\alpha}$ is the Kronecker symbol.

The following proposition holds.
\begin{proposition}
Following~\eqref{eq:kinmomE}, $E_{\alpha,i}$ is given by
$$E_{\alpha,i} = \frac{h_{\alpha,i}}{2}(u_{\alpha,i})^2 +
\frac{g}{2}h_{\alpha,i}h_i + gz_{b,i}h_{\alpha,i},$$
and with the notations of theorem~\ref{thm:entropy_discrete}, we have
\begin{eqnarray*}
	E_{\alpha,i}^{n+1} & \leq & E_{\alpha,i} -\sigma_i\Bigl(\int_{\R}
        \widetilde{H}_{\alpha,i+1/2-} d\xi -
        \int_{\R}\widetilde{H}_{\alpha,i-1/2+} d\xi\Bigr) \nonumber\\
& & -\Delta t^n\Bigl( \int_{\R}\widehat{H}_{\alpha+1/2,i}^{n+1-} d\xi-
        \int_{\R}\widehat{H}_{\alpha-1/2,i}^{n+1-} d\xi\Bigr)\nonumber\\
& & + \int_{\R}
        d_{\alpha,i} d\xi+ \int_{\R} e_{\alpha,i} d\xi + \Delta t^n
        {\cal D}_{\alpha,i},
	\label{eq:entropy_ns}
\end{eqnarray*}
with
\begin{eqnarray*}
{\cal D}_{\alpha,i} & = &
\frac{8\mu}{\Delta x_i} \left(
  h_{\alpha,i+1/2}\frac{u^{\frac{1}{2}}_{\alpha,i+1} +
    u^{\frac{1}{2}}_{\alpha,i}}{2}\frac{u^{\frac{1}{2}}_{\alpha,i+1} -
    u^{\frac{1}{2}}_{\alpha,i}}{\Delta x_{i+1} + \Delta x_i}
\right.\nonumber\\
& & \left. - 
 h_{\alpha,i-1/2} \frac{u^{\frac{1}{2}}_{\alpha,i} + u^{\frac{1}{2}}_{\alpha,i-1}}{2}\frac{u^{\frac{1}{2}}_{\alpha,i} - u^{\frac{1}{2}}_{\alpha,i-1}}{\Delta x_{i} + \Delta x_{i-1}} \right)\\
& & + 2\mu\frac{u^{\frac{1}{2}}_{\alpha+1,i} + u^{\frac{1}{2}}_{\alpha,i}}{2}\frac{u^{\frac{1}{2}}_{\alpha+1,i}
  - u^{\frac{1}{2}}_{\alpha,i} }{h_{\alpha+1,i}+h_{\alpha,i}} - 2\mu\frac{u^{\frac{1}{2}}_{\alpha,i} + u^{\frac{1}{2}}_{\alpha-1,i}}{2}\frac{u^{\frac{1}{2}}_{\alpha,i} -
  u^{\frac{1}{2}}_{\alpha-1,i}}{h_{\alpha,i}+h_{\alpha-1,i}}
\nonumber\\
& & -
\frac{4\mu}{\Delta x_i} \left(
  h_{\alpha,i+1/2}\frac{\left( u^{\frac{1}{2}}_{\alpha,i+1} -
    u^{\frac{1}{2}}_{\alpha,i}\right)^2}{\Delta x_{i+1} + \Delta x_i} - 
 h_{\alpha,i-1/2} \frac{\left(u^{\frac{1}{2}}_{\alpha,i} - u^{\frac{1}{2}}_{\alpha,i-1}\right)^2}{\Delta x_{i} + \Delta x_{i-1}} \right)\\
& & - \mu\frac{\left(u^{\frac{1}{2}}_{\alpha+1,i} -
    u^{\frac{1}{2}}_{\alpha,i}\right)^2}{h_{\alpha+1,i}+h_{\alpha,i}}
- \mu\frac{\left( u^{\frac{1}{2}}_{\alpha,i} - u^{\frac{1}{2}}_{\alpha-1,i}\right)^2}{h_{\alpha,i}+h_{\alpha-1,i}}- \kappa \delta_{1,\alpha} (u^{\frac{1}{2}}_{\alpha,i})^2,
\end{eqnarray*}
\label{prop:entro_ns}
\end{proposition}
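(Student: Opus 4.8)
The plan is to exploit the time-splitting \eqref{eq:glo1}--\eqref{eq:glo2}, which decouples the advection/topography dynamics from the viscous/friction dynamics, and then to chain the corresponding energy balances. The first substep \eqref{eq:glo1} is exactly the semi-discrete Euler scheme \eqref{eq:kinsc} analysed in Theorem~\ref{thm:entropy_discrete}. Integrating its kinetic relation \eqref{eq:entrfullystat3_JSM} in $\xi$, using the moment identities \eqref{eq:kinmomE}, and relaxing the post-step density onto its Maxwellian through Lemma~\ref{lemma:energy}, one obtains for the intermediate state $\tilde U^{n+1}$ the per-layer energy inequality
\begin{align*}
\tilde E^{n+1}_{\alpha,i}\leq E_{\alpha,i}
&-\sigma_i\Bigl(\int_\R\widetilde H_{\alpha,i+1/2-}\,d\xi-\int_\R\widetilde H_{\alpha,i-1/2+}\,d\xi\Bigr)\\
&-\Delta t^n\Bigl(\int_\R\widehat H^{n+1-}_{\alpha+1/2,i}\,d\xi-\int_\R\widehat H^{n+1-}_{\alpha-1/2,i}\,d\xi\Bigr)+\int_\R d_{\alpha,i}\,d\xi+\int_\R e_{\alpha,i}\,d\xi.
\end{align*}
This already produces every term of the announced right-hand side except the viscous contribution $\Delta t^n\mathcal D_{\alpha,i}$, so it remains to account for the transition $\tilde U^{n+1}\to U^{n+1}$ governed by \eqref{eq:glo2_dis}.

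The key structural observation is that this second substep updates only the momenta $h_\alpha u_\alpha$ and leaves every layer depth $h_{\alpha,i}$ unchanged, so the potential part $\tfrac g2 h_\alpha h+gz_bh_\alpha$ of $E_{\alpha,i}$ is frozen and the total energy increment reduces to the kinetic one, $E^{n+1}_{\alpha,i}-\tilde E^{n+1}_{\alpha,i}=\tfrac{h_{\alpha,i}}2\bigl((u^{n+1}_{\alpha,i})^2-(\tilde u^{n+1}_{\alpha,i})^2\bigr)$. I would obtain this increment by multiplying the momentum update \eqref{eq:glo2_dis} by the centred velocity $u^{\frac12}_{\alpha,i}$: since $h_{\alpha,i}$ is constant across the step, $h_{\alpha,i}(u^{n+1}_{\alpha,i}-\tilde u^{n+1}_{\alpha,i})=(hu)^{n+1}_{\alpha,i}-(\widetilde{hu})^{n+1}_{\alpha,i}$, and the semi-implicit (Crank--Nicolson) averaging turns the left-hand side into exactly $\tfrac{h_{\alpha,i}}2((u^{n+1}_{\alpha,i})^2-(\tilde u^{n+1}_{\alpha,i})^2)$. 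Each diffusive term on the right is then reorganised through the elementary identity $c\,(b-c)=\tfrac{b+c}{2}(b-c)-\tfrac12(b-c)^2$, applied with $c=u^{\frac12}_{\alpha,i}$ and $b$ the neighbouring value of $u^{\frac12}$, in the horizontal direction for the terms carrying $h_{\alpha,i\pm1/2}/\Delta x$ and in the vertical direction for the terms carrying $1/(h_{\alpha\pm1,i}+h_{\alpha,i})$. This splits each flux into a conservative transport part, namely the terms of $\mathcal D_{\alpha,i}$ carrying the arithmetic mean of the two neighbouring velocities, and a manifestly nonpositive quadratic part, namely the remaining terms of $\mathcal D_{\alpha,i}$; in particular the coefficient $8\mu$ of the horizontal flux becomes $4\mu$ in its dissipative companion and $2\mu$ becomes $\mu$ for the vertical one, precisely because of the $\tfrac12$ in the identity. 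The friction term $-\kappa\delta_{1,\alpha}u^{\frac12}_{\alpha,i}$ multiplied by $u^{\frac12}_{\alpha,i}$ gives directly the last term $-\kappa\delta_{1,\alpha}(u^{\frac12}_{\alpha,i})^2$. Collecting all contributions yields the exact identity $E^{n+1}_{\alpha,i}-\tilde E^{n+1}_{\alpha,i}=\Delta t^n\mathcal D_{\alpha,i}$, and adding it to the inequality of the first paragraph proves the claim.

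The routine but delicate point is this second step: one must match, term by term, the expansion of \eqref{eq:glo2_dis} multiplied by $u^{\frac12}_{\alpha,i}$ against the exact expression of $\mathcal D_{\alpha,i}$, keeping track of the depth weights $h_{\alpha,i\pm1/2}$ and $h_{\alpha\pm1,i}+h_{\alpha,i}$ that must attach simultaneously to the transport and to the quadratic pieces, and handling the boundary layers $\alpha=1$ (where the friction term is present, as in \eqref{eq:NS_avz_222_1}) and $\alpha=N$ (where the upper vertical exchange is absent, as in \eqref{eq:NS_avz_222_N}). Since the viscous substep contributes an \emph{equality}, no sharpness is lost there; the only inequality entering the final estimate is the one inherited from the advective substep through Theorem~\ref{thm:entropy_discrete}.
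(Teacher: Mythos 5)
Your proposal follows essentially the same route as the paper's own proof: the paper likewise multiplies the viscous update \eqref{eq:glo2_dis} by $u^{\frac{1}{2}}_{\alpha,i}$ to obtain the kinetic-energy identity $\frac{h^{n+1}_{\alpha,i}}{2}(u^{n+1}_{\alpha,i})^2 = \frac{\tilde h^{n+1}_{\alpha,i}}{2}(\tilde u^{n+1}_{\alpha,i})^2 + \Delta t^n \mathcal{D}_{\alpha,i}$ and then adds it to relation \eqref{eq:entrfullystat3_JSM} integrated in $\xi$ over $\R$. Your extra details --- the algebraic identity $c(b-c)=\tfrac{b+c}{2}(b-c)-\tfrac12(b-c)^2$ behind the splitting of $\mathcal{D}_{\alpha,i}$ into transport and dissipative parts, and the relaxation onto the Maxwellian via Lemma~\ref{lemma:energy} --- merely make explicit steps the paper leaves implicit.
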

\begin{proof}[Proof of prop.~\ref{prop:entro_ns}]
First we notice that the semi-implicit step does not modify the water
depth i.e. $h^{n+1}_i={\tilde h}^{n+1}_i$. Multiplying~\eqref{eq:glo2_dis}
by $u_{\alpha,i}^\frac{1}{2}$, it comes
\begin{equation}
\frac{h^{n+1}_{\alpha,i}}{2} (u^{n+1}_{\alpha,i})^2 = 
\frac{{\tilde h}^{n+1}_{\alpha,i}}{2} ({\tilde u}^{n+1}_{\alpha,i})^2
+ \Delta t^n {\cal D}_{\alpha,i},
\label{eq:entropy_ns12}
\end{equation}
and ${\cal D}_{\alpha,i}$ is a consistent discretization of the viscous and friction terms
appearing in~\eqref{eq:energy_eq}. The sum of relation~\eqref{eq:entrfullystat3_JSM}
integrated in $\xi$ over $\R$ and relation~\eqref{eq:entropy_ns12}
gives the result.
\end{proof}

\section*{Acknowledgements}

The authors wish to express their warm thanks to Fran\c{c}ois Bouchut for many fruitful discussions. 

\bibliographystyle{amsplain}
\bibliography{boussinesq}

\end{document}